\theoremstyle{plain}
\newtheorem{thm}{Theorem}
\newtheorem{lemma}[thm]{Lemma}
\newtheorem{proposition}[thm]{Proposition}
\theoremstyle{remark}
\newtheorem{remark}{Remark}
\newtheorem{ass}{Assumption}
\newtheorem{example}{Example}
\newcommand{\E}{\mathbb{E}}
\newcommand{\p}{\mathbb{P}}
\newcommand{\R}{\mathbb{R}}
\newcommand{\I}{\mathbb{I}}
\newcommand{\N}{\mathbb{N}}
\newcommand{\F}{\mathcal{F}}
\newcommand{\C}{\mathcal{C}}
\newcommand{\Sc}{\mathcal{S}}
\newcommand{\A}{\mathcal{A}}
\newcommand{\s}{\sigma}
\newcommand{\gr}{\nabla}
\newcommand{\argmin}{\mathop{\rm argmin}\limits}
\DeclareMathOperator{\tr}{Tr}
\newcommand{\he}{\mathrm{H}}
\title{Coupling and a generalised Policy Iteration Algorithm in continuous time}
\date{}
\author{Saul D.\ Jacka}
\address{Department of Statistics, University of Warwick, and\\ The Alan Turing Institute, UK}
\email{s.d.jacka@warwick.ac.uk}
\author{Aleksandar Mijatovi\'c}
\address{Department of Mathematics, King's College London, and\\ The Alan Turing Institute, UK}
\email{aleksandar.mijatovic@kcl.ac.uk}
\author{Dejan \v{S}iraj}
\address{Department of Statistics, University of Warwick, UK}
\email{d.siraj@warwick.ac.uk}
\begin{document}
\sloppy

\thanks{SD supported by the EPSRC grant EP/P00377X/1; AM supported by the EPSRC grant EP/P003818/1
	and the Programme on Data-Centric Engineering  funded by Lloyd's Register Foundation; 
D\v{S} supported by the Slovene Human Resources Development and Scholarship Fund.}

\keywords{Policy iteration algorithm, policy improvement, optimal stochastic control, controlled diffusion processes, discounted infinite horizon problem, mirror coupling.}

\subjclass[2010]{93E20, 49L99, 60H30}

\begin{abstract}
We analyse a version of the policy iteration algorithm for the discounted
infinite-horizon problem for controlled multidimensional diffusion processes,
where both the drift and the diffusion coefficient can be controlled. 
We prove that, under assumptions on the problem data, the payoffs generated by the
algorithm converge monotonically to the value function and an accumulation point 
of the sequence of policies is an optimal policy. The algorithm is stated and 
analysed in continuous time and state, with 
discretisation featuring neither in theorems nor the proofs.  
A key technical tool used to show that the algorithm is well-defined 
is the mirror coupling of Lindvall and Rogers.
\end{abstract}

\date{\today}

\maketitle

\section{Introduction}
\label{ch3INT}

Howard's policy iteration algorithm (PIA)~\cite{Howard} 
is a well-known  tool for solving control
problems for Markov decision processes (see e.g.~\cite{Bertsekas}
for a recent survey of approximate policy iteration methods for finite
state, discrete time, stochastic dynamic programming problems). 
The algorithm can be recast for general state-spaces continuous-time control problems,
where allowed actions can be chosen from a Polish space. 
In this paper we investigate the convergence of the PIA
for an infinite horizon discounted cost problem in the context of controlled diffusion processes in $\R^d$, where
the control takes place in an arbitrary compact metric space. 
The main aim of the paper is to analyse the convergence of 
a sequence of policies and the corresponding payoff functions produced by the PIA
under assumptions that are at least in principle verifiable in terms of the model data. 

Our control setting is similar to that of~\cite{Ari}, where an ergodic cost criterion
was considered. The main differences, beyond the cost criterion, are as follows:
(1) we allow the controller to modulate the drift as well as the diffusion coefficient; 
(2) we consider a generalised version of the PIA where an arbitrary scaling function can be used 
to simplify the algorithm;
(3) we investigate the convergence not only of payoffs but also of policies produced by the PIA;
(4) we work with Markov policies that are defined for every $x\in\R^d$, not almost everywhere, and 
obtain a locally uniform convergence of a subsequence to the optimal policy. 

This last point in this list is particularly important in our setting, as our aim is to design and analyse an algorithm 
that can in principle at least be used in to construct an optimal policy. This requirement forces us to work 
in the context of classical solutions of PDEs, rather than relying on generalised solutions of the Poisson equation 
in appropriate Sobolev spaces. The latter approach, followed in~\cite{Ari}, is based on the fact that there exists
a canonical solution to the Poisson equation in $W^{2,p}_{\textrm{loc}}(\R^d)$, see~\cite{Ari_Borkar}.
The analysis of the PIA can than be performed using Krylov’s extension~\cite{Krylov} of It\^o's formula 
to functions  in the Sobolev space $W^{2,p}_{\textrm{loc}}(\R^d)$.

Our method for solving the Poisson equation in the classical sense is based on the coupling of Lindvall and Rogers~\cite{article6}.
This coupling plays a crucial role in the proof of Proposition~\ref{operatorIHM}, guaranteeing that a payoff function for 
a locally Lipschitz Markov policy is the classical solution of the corresponding  Poisson equation. The main technical
contribution of the paper is the result in Lemma~\ref{mdc},
which shows that the mirror coupling from~\cite{article6}
is successful with very high probability, when the diffusion processes are started sufficiently close together. 
Interestingly, the condition in~\cite{article6} for the coupling to be successful is 
not satisfied in our setting in general. 
The proof of Lemma~\ref{mdc} is based on a local path-wise comparison of a time-change of the distance between the coupled diffusions
and an appropriately chosen Bessel process. 

The convergence of the policies and payoffs in the PIA is obtained in several steps. First we show that the 
PIA always improves the payoff. Then we prove, using a ``diagonalisation'' argument and an Arzela-Ascoli type result,
that a subsequence of the policies produced by the PIA converges locally uniformly to a locally Lipschitz limiting 
policy. The final stage of the argument shows that this limiting policy is indeed an optimal policy with payoff 
equal to the pointwise limit of the payoffs produced by the PIA. These steps are detailed in Section~\ref{ch3IHM}
and proved in Section~\ref{sec:Proofs}. 

The literature on the PIA for Markov decision processes in various settings is extensive
(see e.g. \cite{Doshi}, \cite{Lerma}, \cite{Hordi}, \cite{Lasser},
\cite{Meyn}, \cite{Parr}, \cite{Mahadevan}, \cite{Rust}, \cite{Santos} and
\cite{Zhu} and the references therein).
Our approach is in some sense closest to the continuous time analysis in general state spaces 
in~\cite{Doshi}, where the convergence of the subsequence of the policies is established 
in the case of finite action space. 
In~\cite{Zhu} this restriction is removed, but the controlled processes considered do not include
diffusions. 
A recent application of the PIA to impulse control in continuous time is given in~\cite{Erhan}.

Finally, as mentioned above, we observe that the PIA can be generalised by multiplying the expression to be minimised by 
an arbitrary positive scaling function that can depend both on the state and the control action (see~\eqref{piaIHM} below).
A choice of the scaling function clearly influences the sequence of policies produced by the algorithm.
In particular, in the one-dimensional case, the scaling function  can be used to eliminate the second derivative of 
the payoff from the algorithm. This idea is described in Section~\ref{ch3IHO}. A numerical example of the PIA 
is reported in Section~\ref{ch3EXA}. In this examples at least, the convergence of the PIA is very fast as the algorithm finds 
an optimal policy in fewer than six iterations.

\section{Multidimensional controlled diffusion processes}
\label{ch3IHM}


Let
$(A,d_A)$
be a compact metric space of available control actions and, 
for some $d,m\in\N$,
let 
$\s : \R^d \times A \to \R^{d \times m}$
and 
$\mu : \R^d \times A \to \R^d$
be measurable functions.
Let the set $\A(x)$ 
of \textit{admissible policies} at
$x\in\R^d$
consist of processes 
$\Pi = (\Pi_t)_{t \geq 0}$
satisfying the following:
$\Pi$  is $A$-valued, adapted to a filtration $(\F_t)_{t\geq0}$ and there exists
an $(\F_t)$-adapted process $X^{\Pi,x} = \big( X^{\Pi,x}_t \big)_{t \geq 0}$
satisfying the SDE
\begin{align}
\label{sdeIHM}
X_t^{\Pi,x} = x + \int_0^t \s \left( X_s^{\Pi,x}, \Pi_s \right) \mathrm{d}B_s +
\int_0^t \mu \left( X_s^{\Pi,x},\Pi_s \right) \mathrm{d}s, \quad t \geq 0,
\end{align}
where 
$B = (B_t)_{t \geq 0}$
is an $m$-dimensional $(\F_t)$-Brownian motion. 
Note that the 
filtration $(\F_t)_{t\geq0}$ (and indeed the entire filtered probability space) 
depends on the policy 
$\Pi$ in $\A(x)$.
Pick measurable functions
$\alpha : \R^d \times A \to \R$
and
$f : \R^d \times A \to \R$.
For any
$x \in \R^d$
and
$\Pi \in \A(x)$,
define the \textit{payoff} of the policy $\Pi$ by
\begin{align*}
V_\Pi(x) := \E \left( \int_0^{\infty} \mathrm{e}^{-\int_0^t \alpha \left( X^{\Pi,x}_s, \Pi_s \right) \mathrm{d}s} f \left( X^{\Pi,x}_t, \Pi_t \right) \mathrm{d}t \right).
\end{align*}
\textbf{Control problem.} Construct the \emph{value function}
$V$,
defined by
\begin{equation*}
V(x) := \inf_{\Pi \in \A(x)} V_\Pi(x), \quad x \in \R^d,
\end{equation*}
and, if it exists, an optimal control 
$\{\Pi^x\in\A(x):x\in\R^d\}$,
satisfying 
$V(x)=V_{\Pi^x}(x)$.

Note first that 
the problem is specified completely by the deterministic data 
$\s$,
$\mu$,
$\alpha$
and
$f$.
In order to define an algorithm that solves this problem, 
the functions 
$\s$, $\mu$, $\alpha$, $f$ are assumed to satisfy 
Assumption~\ref{ass1IHM}  below
throughout this section. 

\begin{ass}
\label{ass1IHM}
The functions
$\s$,
$\mu$,
$\alpha$
and
$f$
are bounded, and Lipschitz on compacts in
$\R^d \times A$,
i.e. for every compact set
$K \subseteq \R^d\times A$
there exists a constant
$C_K > 0$
such that
\begin{equation}
\label{eq:h_bound_LLiP}
\|h(x,p) - h(y,r)\| \leq C_K \left( \|x-y\|^2 + d_A(p,r)^2 \right)^{\frac{1}{2}}
\end{equation}
holds for every
$(x,p),(y,r) \in K$,
and
$h \in \{ \s, \mu, \alpha, f \}$.
In addition,
$\alpha(x,p)>\epsilon_0>0$ for all $(x,p)\in\R^d\times A$,
and there exists
$\lambda > 0$
such that
\begin{equation}
\label{lambda}
\langle \s(x,p) \s(x,p)^Tv, v\rangle 
\geq \lambda \|v\|^2
\qquad \text{for all} \quad x \in \R^d, p \in A, v \in \R^d.
\end{equation}
\end{ass}

\begin{remark}
\label{rem:matrix_norm}
Here, and throughout the paper,
$\|\cdot\|$
and 
$\langle\cdot,\cdot\rangle$
denote the Euclidean norm and inner product respectively. 
The norm
$\|M\| = \sup \left\{ \|Mv\|/\|v\|: v \in \R^m \backslash \{0\} \right\}=\sqrt{\lambda_{\max}(M M^T)}$,
for a matrix
$M \in \R^{d \times m}$, 
is used in~\eqref{eq:h_bound_LLiP} for $h=\sigma$,
where
$\lambda_{\max}(M M^T)$ is the largest eigenvalue of the non-negative definite matrix $MM^T\in\R^{d\times d}$
and
$M^T\in\R^{m \times d}$ 
denotes the transpose of $M$.
\end{remark}

\begin{remark}
\label{rem:UElip}
The uniform ellipticity condition in~\eqref{lambda} is  the multidimensional analogue of the volatility being bounded away from
$0$.
Hence, 
for all
$x\in\R^d$
and
$p\in A$,
the smallest eigenvalue of
$\s(x,p) \s(x,p)^T\in\R^{d\times d}$
is at least of size 
$\lambda$
and,
in particular, $m\geq d$ (cf. Remark~\ref{rem:d_and_m} below).
\end{remark}

A measurable function
$\pi : \R^d \to A$
is a \emph{Markov policy} (or synonymously \textit{Markov control}) if for 
$x \in \R^d$
there exists an 
$\R^d$-valued
process
$X^{\pi,x} = \left( X^{\pi,x}_t \right)_{t \geq 0}$
that satisfies the following SDE:
\begin{equation}
\label{sde2IHM}
X_t^{\pi,x} = x + \int_0^t \s \left( X_s^{\pi,x}, \pi \left( X_s^{\pi,x} \right) \right) \mathrm{d}B_s +
\int_0^t \mu \left( X_s^{\pi,x}, \pi \left( X_s^{\pi,x} \right) \right) \mathrm{d}s, \quad t \geq 0.
\end{equation}
Let $(\F_t)_{t\geq0}$ be a filtration 
with respect to which
$(X^{\pi,x},B)$ is $(\F_t)$-adapted and $B$ is an  $(\F_t)$-Brownian motion. 
Such a filtration 
$(\F_t)_{t\geq0}$
exists by the definition of a solution of SDE~\eqref{sde2IHM}, see e.g.~\cite[Def.~5.3.1, p.~300]{Karatzas}.
Then
$(\F_t)_{t\geq0}$
can be taken to be the filtration 
in the definition of the policy $\pi(X^{\pi,x})\in\A(x)$. 
Moreover, without loss of generality, we may assume that 
$(\F_t)_{t\geq0}$
satisfies the usual conditions. 

For any function
$\pi : \R^d \to A$
that is Lipschitz on compacts (i.e. locally Lipschitz) in
$\R^d$,
Assumption~\ref{ass1IHM}
implies
(see e.g.~\cite[p.\ 45]{Borodin} and the references therein) 
that the SDE in~\eqref{sde2IHM} has a unique, strong non-exploding solution, 
thus making
$\pi$
a Markov policy.
The payoff function of a locally Lipschitz Markov policy is a classical solution of a linear PDE, a 
fact key for~\eqref{piaIHM} to work. To state this formally,
recall that $h:\R^d\to\R^k$ (for any $k\in\N$) is 
$(1/2)$-H\"older continuous on a compact $D\subset \R^d$
if  $\|h(x')-h(x'')\|\leq K_D \|x'-x''\|^{1/2}$ holds 
for some constant $K_D>0$ and all $x',x''\in D$.
Streamline the notation for Markov policies as follows:
\begin{equation}
\label{eq:Markov_payoff}
\begin{array}{l}
V_\pi(\cdot) := V_{\pi(X^{\pi,\cdot})}(\cdot) \quad \text{and}
\quad L_\pi h := \frac{1}{2} \tr \left( \s_\pi^T \he h \,\s_\pi \right) + \mu_\pi^T \gr h
\quad \text{for} \quad h \in \C^2(\R^d),\\
 \s_\pi(\cdot) := \s(\cdot,\pi(\cdot)), \quad \mu_\pi(\cdot) := \mu(\cdot,\pi(\cdot)),
\quad \alpha_\pi(\cdot) := \alpha(\cdot,\pi(\cdot)), \quad f_\pi(\cdot) := f(\cdot,\pi(\cdot)),
\end{array}
\end{equation}
where
$\he h$ and $\gr h$
are the Hessian and gradient of 
$h$, respectively,  and 
$\tr(M)$ denotes  the trace of any matrix $M\in\R^{m\times m}$.

\begin{proposition}
\label{operatorIHM}
Let Assumption~\ref{ass1IHM} hold.
For a locally Lipschitz Markov policy
$\pi$ we have:
$V_\pi\in\C^2(\R^d)$
is the unique bounded solution of the Poisson equation
$L_\pi V_\pi - \alpha_\pi V_\pi + f_\pi = 0$
and
$\he V_\pi$ is $(1/2)$-H\"older on compacts in $\R^d$.
\end{proposition}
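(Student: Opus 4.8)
The plan is to combine classical interior Schauder theory for linear elliptic equations with a Feynman--Kac / It\^o identification, invoking the mirror coupling of~\cite{article6} only where the continuity of the probabilistically-defined payoff $V_\pi$ is needed. Since $\pi$ and the data $\s,\mu,\alpha,f$ are Lipschitz on compacts and $M\mapsto MM^T$ is locally Lipschitz, the coefficients $\s_\pi\s_\pi^T,\mu_\pi,\alpha_\pi$ of the operator $h\mapsto L_\pi h-\alpha_\pi h$, together with $f_\pi$, are Lipschitz on compacts, hence locally $(1/2)$-H\"older; the operator is uniformly elliptic by~\eqref{lambda}; and $\alpha_\pi\geq\epsilon_0>0$. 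Consequently, for every $R>0$ and every $g\in\C(\partial B_R)$ the Dirichlet problem $L_\pi u-\alpha_\pi u+f_\pi=0$ on $B_R$, $u|_{\partial B_R}=g$, has a unique solution $u_R\in\C^{2,1/2}(B_R)\cap\C(\overline{B_R})$ by Schauder theory, and for $R'<R$ the interior Schauder estimates bound $\|u_R\|_{\C^{2,1/2}(\overline{B_{R'}})}$ in terms of $\sup_{B_R}|u_R|$ and the local data.

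Second, I would record two properties of $V_\pi$ itself: it is bounded, with $|V_\pi|\leq\|f\|_\infty/\epsilon_0$ (since $f$ is bounded and $\alpha_\pi\geq\epsilon_0$), and, by the strong Markov property, for the exit time $\tau_R$ of $B_R$ it satisfies the Dynkin-type identity
\[V_\pi(x)=\E\Big[\int_0^{\tau_R}\mathrm e^{-\int_0^t\alpha_\pi(X^{\pi,x}_s)\,\mathrm ds}f_\pi(X^{\pi,x}_t)\,\mathrm dt+\mathrm e^{-\int_0^{\tau_R}\alpha_\pi(X^{\pi,x}_s)\,\mathrm ds}V_\pi(X^{\pi,x}_{\tau_R})\Big].\]
The delicate point is the continuity of $V_\pi$: because the coefficients are only locally Lipschitz, the usual Gronwall bound on $\E\|X^{\pi,x}_t-X^{\pi,y}_t\|^2$ grows exponentially at a rate not controlled by $\epsilon_0$, so it does not bound the payoff difference over the infinite horizon. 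Instead I would couple $X^{\pi,x}$ and $X^{\pi,y}$, for $x,y$ close, by the mirror coupling, bound a time-change of the distance $\|X^{\pi,x}_t-X^{\pi,y}_t\|$ pathwise by a Bessel process, and use the resulting estimate that the coupling succeeds with high probability when the starting points are close (this is where the key coupling lemma, Lemma~\ref{mdc}, enters) to conclude that $|V_\pi(x)-V_\pi(y)|$ is small, i.e.\ that $V_\pi$ is locally H\"older, indeed locally Lipschitz. \emph{This is the main obstacle.}

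Third, I would identify $V_\pi$ with $u_R$ on each ball. Fix $R$ and let $u_R$ be the Dirichlet solution with boundary data $g:=V_\pi|_{\partial B_R}$, continuous by the previous step. Applying It\^o's formula to $t\mapsto\mathrm e^{-\int_0^t\alpha_\pi(X^{\pi,x}_s)\,\mathrm ds}u_R(X^{\pi,x}_t)$ up to $\tau_R$---legitimate since $u_R\in\C^2(B_R)$ has bounded first derivatives on $B_R$ and $\s_\pi,\mu_\pi$ are bounded---the drift term vanishes because $u_R$ solves the equation, and taking expectations shows that $u_R$ satisfies on $\overline{B_R}$ the same identity as $V_\pi$ in the display above with $\tau_R$ as stopping time; hence $u_R\equiv V_\pi$ on $\overline{B_R}$. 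Since $R$ is arbitrary and $u_R\in\C^{2,1/2}(B_R)$, we obtain $V_\pi\in\C^2(\R^d)$, that $\he V_\pi$ is $(1/2)$-H\"older on compacts, and, letting $R\to\infty$, that $L_\pi V_\pi-\alpha_\pi V_\pi+f_\pi=0$ on $\R^d$. (Alternatively one may take zero boundary data, verify $u_R\to V_\pi$ pointwise by dominated convergence---using non-explosion, so $\tau_R\to\infty$, and $\alpha_\pi\geq\epsilon_0$---and pass to a locally $\C^2$-convergent subsequence via the interior estimates and Arzel\`a--Ascoli.)

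Finally, uniqueness among bounded solutions. If $w\in\C^2(\R^d)$ is bounded and solves $L_\pi w-\alpha_\pi w+f_\pi=0$, then $v:=V_\pi-w$ is bounded, $\C^2$ and satisfies $L_\pi v-\alpha_\pi v=0$; applying It\^o's formula to $t\mapsto\mathrm e^{-\int_0^t\alpha_\pi(X^{\pi,x}_s)\,\mathrm ds}v(X^{\pi,x}_t)$ up to $\tau_R$ and letting the time horizon tend to infinity gives $v(x)=\E\big[\mathrm e^{-\int_0^{\tau_R}\alpha_\pi(X^{\pi,x}_s)\,\mathrm ds}v(X^{\pi,x}_{\tau_R})\big]$, whose right-hand side tends to $0$ as $R\to\infty$ because $\tau_R\to\infty$ almost surely (non-explosion), $\int_0^{\tau_R}\alpha_\pi\geq\epsilon_0\tau_R$, and $v$ is bounded. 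Hence $v\equiv0$ and $V_\pi$ is the unique bounded solution.
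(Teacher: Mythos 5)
Your proposal follows the paper's argument essentially step by step: interior Schauder theory (Friedman's Theorem 19) for existence and $\C^{2,1/2}$ regularity of the Dirichlet solution on a ball, the mirror-coupling Lemma~\ref{mdc} as the crucial device for establishing continuity of $V_\pi$ so that the boundary data is admissible, It\^o/Feynman--Kac (equivalently Lemma~\ref{martingaleIHM}) to identify the PDE solution with $V_\pi$ on each ball, and an It\^o-plus-DCT argument for uniqueness. The only cosmetic difference is that you stop at $\tau_R$ and send $R\to\infty$ in the uniqueness step whereas the paper takes $\tau\equiv\infty$ directly, and you slightly overclaim (local Lipschitz rather than continuity) for $V_\pi$ after the coupling step, but since only continuity is used this does not affect the argument.
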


\begin{remark}
\label{rem:d_and_m}
The proof of Proposition~\ref{operatorIHM},
given in Section~\ref{subsec:Proofs_Multidim}, 
depends crucially on the coupling in Lindvall and Rogers~\cite{article6} of two $d$-dimensional diffusions
via a coupling of the corresponding $d$-dimensional driving Brownian motions (see Lemma~\ref{mdc} below). Moreover, it is easy to
see that the probability of coupling could in general be zero if the dimension $m$
of the Brownian noise is strictly less than $d$. In this case the controlled diffusions in $\R^d$, started at distinct points,
could  remain forever on disjoint submanifolds of positive co-dimension
in $\R^d$ for any choice of  control. 
In particular, Proposition~\ref{operatorIHM}
fails in the case $m<d$, as demonstrated by Example~\ref{ex:Prof_fails} below. 
\end{remark}

\begin{example}
\label{ex:Prof_fails}
Let
$m=1$, 
$d=2$, 
$A=[-1,1]$, $f(x_1,x_2,a):=|x_1+x_2|+|x_1-x_2|+a^2/2$, $\alpha(x_1,x_2,a)\equiv 1$, 
$\s(x_1,x_2,a)\equiv(1, 1)^T$ and $\mu(x_1,x_2,a)=a (1 ,-1)^T$.
Then, for a constant policy $\pi_a\equiv a\in A$, the controlled process 
started at $x\in\R^2$ is given by
$X^{\pi,x}_t= x+(1, 1)^T B_t + a (1 ,-1)^T t$, for $t\geq0$.
In particular, for $a=0$ and any $x=(x_1,x_2)^T$, we have
$V_{\pi_0}(x)=|x_1-x_2|+g(x_1+x_2)$, where $g(y):=\E|y+2 B_{e_1}|$, $y\in\R$,  and $e_1$ is an exponential random variable 
with mean $1$, independent of $B$. Since $B_{e_1}$ has a smooth density, it follows that $g$ is also smooth,
implying that $V_{\pi_0}$ cannot satisfy the conclusion of Proposition~\ref{operatorIHM}.
\end{example}

\begin{remark}
\label{rem:d_is_m}
As we are using the standard weak formulation of the control problem (i.e. the filtered probability space is \emph{not} 
specified in advance), all that matters for a Markov policy $\pi$ is the law of the controlled process
$X^{\pi,\cdot}$, which solves the martingale problem in~\eqref{sde2IHM}. 
Moreover, this law is uniquely determined by the symmetric-matrix valued function 
$(x,a)\mapsto \s(x,a)\s(x,a)^T\in\R^{d\times d}$ (and of course the drift $(x,a)\mapsto\mu(x,a)$).
Since the symmetric square root of $\s(x,a)\s(x,a)^T$ in $\R^{d\times d}$ 
satisfies Assumption~\ref{ass1IHM}, in the remainder of the paper we assume, without loss of generality,
that the noise and the controlled process are of the same dimension, i.e. $d=m$ (cf. Remark~\ref{rem:UElip}).
\end{remark}

\begin{remark}
For any locally Lipschitz Markov policy $\pi$,
the process $X^{\pi,\cdot}$ is strong Markov. Hence~\cite[Thm.~1.7]{Dynkin}
implies that $V_\pi$ is in the domain $\mathcal{D}(A_\pi)$ of the generator $A_\pi$ of $X^{\pi,\cdot}$ and that the Poisson equation 
$A_\pi V_\pi - \alpha_\pi V_\pi + f_\pi = 0$ holds.
Recall that for a bounded continuous $g:\R^d\to\R$ 
in $\mathcal{D}(A_\pi)$
the limit 
$(A_\pi g)(x):= \lim_{t\to0} (\E g(X^{\pi,x}_t) - g(x))/t$
exists 
for all $x\in\R^d$.
Furthermore, 
if $g$ is also in $\C^2(\R^d)$,
it is known that $A_\pi g = L_\pi g$.  
However,~\cite[Thm.~1.7]{Dynkin} 
does not imply that $V_\pi$ is in $\C^2(\R^d)$. 
The PDE in Proposition~\ref{operatorIHM}, key for~\eqref{piaIHM} to work,
is established via the coupling argument in Section~\ref{subsec:Proofs_Multidim}. 
\end{remark}

If a policy
$\pi:\R^d\to A$
is constant 
(i.e. $\pi\equiv p\in A$),
write
$\s_p$,
$\mu_p$,
$\alpha_p$,
$f_p$,
$L_p$
and
$V_p$
instead of
$\s_{\pi}$,
$\mu_{\pi}$,
$\alpha_{\pi}$,
$f_{\pi}$,
$L_{\pi}$
and
$V_\pi$,
respectively.
Let 
$S:\R^d\times A\to(0,\infty)$ be a continuous function 
and, for any $p\in A$, denote $S_p(x):=S(x,p)$.
Under Assumption~\ref{ass1IHM}, 
the function
$p \mapsto S_p(x)(L_p h(x)  - \alpha_p(x) h(x) + f_p(x))$, $p \in A$,
is continuous for any $x \in \R^d$ and $h\in \C^2(\R^d)$.
Since $A$ is compact, there exists 
$I_h(x)\in A$, which minimises this function.

\begin{ass}
\label{ass2_and_a_half_IHM}
For any $h\in \C^2(\R^d)$,
the function $I_h:\R^d\to A$ can be chosen to be locally Lipschtiz on $\R^d$. 
The continuous scaling  function  
$S$ 
satisfies 
$\epsilon_S<S<M_S$
for some 
$\epsilon_S,M_S\in (0,\infty)$.
\end{ass}

\begin{algorithm}[H]
\begin{center}
\textbf{Generalised Policy Iteration Algorithm}
\end{center}
\KwIn{$\s$, $\mu$, $\alpha$, $f$, $S$  satisfying Assumptions~\ref{ass1IHM}--\ref{ass2_and_a_half_IHM}, 
constant policy $\pi_0$ and $N\in\N$.}
\For{$n=0$ \KwTo $N-1$}{Compute $V_{\pi_n}$ from the PDE in Proposition~\ref{operatorIHM}.  Choose the policy $\pi_{n+1}$ as follows:
\begin{equation}
\label{piaIHM}
\tag{gPIA}
\pi_{n+1}(x) \in \argmin_{p \in A} \left\{S_p(x) \left( L_p V_{\pi_{n}}(x) - \alpha_p(x) V_{\pi_{n}}(x) + f_p(x) \right)\right\},
\quad x \in \R^d.
\end{equation}
}
\KwOut{Approximation $V_{\pi_N}$ of the value function $V$.}
\end{algorithm}
\smallskip

\begin{remark}
\label{rem:Well_defined_alg}
By Assumption~\ref{ass2_and_a_half_IHM}, the policy $\pi_{n+1}$ defined in~\eqref{piaIHM} is locally Lipschitz 
for all $0\leq n\leq N-1$. 
Hence, by Proposition~\ref{operatorIHM}, the algorithm is well defined. 
\end{remark}

\begin{remark}
In the classical case of~\eqref{piaIHM} 
we take $S\equiv1$.
A non-trivial scaling function $S$, which plays 
an important role in the one-dimensional context (see Section~\ref{ch3IHO} below), 
makes the algorithm  into
a generalised Policy Iteration Algorithm. 
Thm~\ref{decreasingIHM} requires only the positivity and continuity of $S$.
The uniform bounds on $S$ in Assumption~\ref{ass2_and_a_half_IHM}
are used only in the proof of Thm~\ref{verificationIHM}.
\end{remark}

\eqref{piaIHM} always leads to an improved payoff 
(Theorem~\ref{decreasingIHM} is proved in Section~\ref{subsec:Proofs_Multidim} below):
\begin{thm}
\label{decreasingIHM}
Under
Assumptions~\ref{ass1IHM}--\ref{ass2_and_a_half_IHM}, 
the inequality
$V_{\pi_{n+1}} \leq V_{\pi_{n}}$
holds
on
$\R^d$
for all
$n \in \{0,\ldots,N-1\}$.
\end{thm}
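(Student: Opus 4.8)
The plan is a one-step policy-improvement argument of Feynman--Kac (equivalently, maximum-principle) type; it is short precisely because the regularity it needs has already been established in Proposition~\ref{operatorIHM}. Fix $n\in\{0,\dots,N-1\}$. By Remark~\ref{rem:Well_defined_alg} the policies $\pi_n$ and $\pi_{n+1}$ are locally Lipschitz Markov policies, so Proposition~\ref{operatorIHM} applies: the functions $V_{\pi_n},V_{\pi_{n+1}}\in\C^2(\R^d)$ are bounded and solve, respectively, $L_{\pi_n}V_{\pi_n}-\alpha_{\pi_n}V_{\pi_n}+f_{\pi_n}=0$ and $L_{\pi_{n+1}}V_{\pi_{n+1}}-\alpha_{\pi_{n+1}}V_{\pi_{n+1}}+f_{\pi_{n+1}}=0$ on $\R^d$.

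First I would exploit the defining property \eqref{piaIHM} of $\pi_{n+1}$. Since for each $x\in\R^d$ the action $\pi_n(x)\in A$ is a competitor in the minimisation, we have
\begin{align*}
S_{\pi_{n+1}(x)}(x)&\bigl(L_{\pi_{n+1}}V_{\pi_n}(x)-\alpha_{\pi_{n+1}}(x)V_{\pi_n}(x)+f_{\pi_{n+1}}(x)\bigr)\\
&\le S_{\pi_n(x)}(x)\bigl(L_{\pi_n}V_{\pi_n}(x)-\alpha_{\pi_n}(x)V_{\pi_n}(x)+f_{\pi_n}(x)\bigr)=0,
\end{align*}
the last equality being the Poisson equation for $V_{\pi_n}$. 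As $S>0$, dividing by $S_{\pi_{n+1}(x)}(x)$ yields the differential inequality $L_{\pi_{n+1}}V_{\pi_n}-\alpha_{\pi_{n+1}}V_{\pi_n}+f_{\pi_{n+1}}\le 0$ on $\R^d$. Subtracting the Poisson equation for $V_{\pi_{n+1}}$, the bounded function $w:=V_{\pi_n}-V_{\pi_{n+1}}\in\C^2(\R^d)$ then satisfies
\[
L_{\pi_{n+1}}w-\alpha_{\pi_{n+1}}w\le 0\qquad\text{on }\R^d .
\]

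It remains to convert this into the pointwise inequality $w\ge 0$. For this I would run the diffusion $X=X^{\pi_{n+1},x}$ from \eqref{sde2IHM} (which exists and does not explode, since $\pi_{n+1}$ is locally Lipschitz) and apply It\^o's formula to $t\mapsto \mathrm e^{-\int_0^t\alpha_{\pi_{n+1}}(X_s)\,\mathrm ds}w(X_t)$: its finite-variation part is non-increasing because its density equals $\mathrm e^{-\int_0^s\alpha_{\pi_{n+1}}(X_u)\,\mathrm du}\bigl(L_{\pi_{n+1}}w-\alpha_{\pi_{n+1}}w\bigr)(X_s)\le 0$, while the local-martingale part becomes a genuine martingale once stopped at the exit time $\tau_R$ of the ball $\{\|\cdot\|<R\}$ (the integrand $\mathrm e^{-\int_0^{\cdot}\alpha_{\pi_{n+1}}}\,(\gr w)^{T}\s_{\pi_{n+1}}$ being bounded on $[0,\tau_R]$ since $\s$ is bounded and $\gr w$ is continuous). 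Taking expectations gives $\E\bigl[\mathrm e^{-\int_0^{t\wedge\tau_R}\alpha_{\pi_{n+1}}(X_s)\,\mathrm ds}w(X_{t\wedge\tau_R})\bigr]\le w(x)$; letting $R\to\infty$ (using $\tau_R\to\infty$ a.s.\ and boundedness of $w$) and then $t\to\infty$ (using $\mathrm e^{-\int_0^t\alpha_{\pi_{n+1}}(X_s)\,\mathrm ds}\le\mathrm e^{-\epsilon_0 t}\to 0$ from Assumption~\ref{ass1IHM} and boundedness of $w$), dominated convergence yields $0\le w(x)$, i.e.\ $V_{\pi_{n+1}}(x)\le V_{\pi_n}(x)$ for every $x\in\R^d$.

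There is no deep obstacle here: the only delicate points are the localisation at $\tau_R$ and the two limit passages in the last step, and these are controlled purely by the uniform lower bound $\alpha>\epsilon_0>0$, the boundedness and non-explosion of $X^{\pi_{n+1},\cdot}$, and --- crucially --- the fact, supplied by Proposition~\ref{operatorIHM}, that $V_{\pi_n}$ and $V_{\pi_{n+1}}$ are \emph{genuinely} $\C^2$ and bounded, so that It\^o's formula applies in the classical sense and $w$ is a legitimate test function. Thus the real work sits in Proposition~\ref{operatorIHM}; granted it, Theorem~\ref{decreasingIHM} follows from this standard verification-type computation.
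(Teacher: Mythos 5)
Your argument is correct, and it is a verification-type computation of the same general flavour as the paper's, but the decomposition is genuinely different. You subtract the two Poisson equations: using \eqref{piaIHM} with the competitor $p=\pi_n(x)$ and the equation $\mathcal{}L_{\pi_n}V_{\pi_n}-\alpha_{\pi_n}V_{\pi_n}+f_{\pi_n}=0$ you get $L_{\pi_{n+1}}V_{\pi_n}-\alpha_{\pi_{n+1}}V_{\pi_n}+f_{\pi_{n+1}}\le 0$, then invoke Proposition~\ref{operatorIHM} a second time, for $\pi_{n+1}$, to pass to the difference $w=V_{\pi_n}-V_{\pi_{n+1}}$ satisfying $L_{\pi_{n+1}}w-\alpha_{\pi_{n+1}}w\le 0$, and conclude $w\ge 0$ by a probabilistic maximum principle (It\^o along $X^{\pi_{n+1},x}$, localisation at $\tau_R$, then $R\to\infty$, $t\to\infty$ using $\alpha\ge\epsilon_0>0$ and boundedness of $w$). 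The paper instead never uses the Poisson equation for $V_{\pi_{n+1}}$: it applies It\^o to the process $Y_t=F_t(X^{\pi_{n+1},x})+\mathrm e^{-\int_0^t\alpha_{\pi_{n+1}}}V_{\pi_n}(X^{\pi_{n+1},x}_t)$, which carries the running cost explicitly, obtains $\E Y_{t\wedge\tau_m}\le V_{\pi_n}(x)$ from the same \eqref{piaIHM} inequality, and recovers $V_{\pi_{n+1}}(x)=\E F_\infty(X^{\pi_{n+1},x})$ in the limit $t,m\to\infty$ by dominated convergence, i.e.\ it only needs the \emph{definition} of the payoff for the new policy, plus regularity of $V_{\pi_n}$. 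Your route is slightly cleaner as a PDE comparison, at the price of a second application of Proposition~\ref{operatorIHM}; that application is legitimate here, since by Remark~\ref{rem:Well_defined_alg} (Assumption~\ref{ass2_and_a_half_IHM}) $\pi_{n+1}$ is locally Lipschitz. All the localisation and limit steps you describe ($\tau_R\to\infty$ by non-explosion, boundedness of $\gr w$ on compacts, the uniform discount bound $\mathrm e^{-\epsilon_0 t}$) are exactly the ingredients the paper also uses, so there is no gap.
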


The sequence 
$\{ V_{\pi_N} \}_{N \in \N}$, obtained by 
running~\eqref{piaIHM} from a given policy $\pi_0$ for each $N\in\N$, 
is  non-increasing and bounded.  Hence we may define
\begin{equation}
\label{eq:V_lim_def}
V_{\lim}(x) := \lim_{N \to \infty}V_{\pi_N}(x), \quad x \in \R^d.
\end{equation}
However, 
the sequence of the corresponding Markov policies
$\{\pi_N\}_{N\in\N}$
need not converge 
and, 
even if it did, the limit may be discontinuous and hence not necessarily a Markov policy.

\begin{remark}
If the algorithm stops before $N$, i.e.\
$\pi_{n+1} = \pi_n$
for some
$n<N$,
then clearly
$V_{\pi_N} = V_{\pi_n}$
and
$\pi_N = \pi_n$.
As this holds for any $N>n$,
with~\eqref{piaIHM} started at a given $\pi_0$,
we may proceed directly to the verification lemma (Theorem \ref{verificationIHM} below) to conclude that
$V_{\pi_n}$
is the value function with 
an optimal policy 
$\pi_n$.
\end{remark}

Controlling the convergence of the policies requires the following additional assumptions. 
Introduce the set
$\Sc_{B,K}:=\{ h\in \C^2(\R^d): \|\gr h(x)\|<B^1, \|\he h(x)\|<B^2\text{ for }x\in D_K\}$,
where 
$D_K:=\{x\in\R^d:\|x\|\leq K\}$
is  a ball of radius 
$K>0$ and $B:=(B^1,B^2)\in(0,\infty)^2$ are constants.

\begin{ass}
\label{ass2IHM}
For any $K>0$, there exist constants 
$B_K$ and $C_K$ satisfying the following:
if
$h\in\Sc_{B_K,K}$,
then 
$I_h$
(defined in Assumption~\ref{ass2_and_a_half_IHM})
satisfies 
$d(I_h(x),I_h(y))\leq C_K \|x-y\|$ for all 
$x,y\in D_K$.
\end{ass}

\begin{ass}
\label{ass3IHM}
For any 
$K>0$,
let 
$B_K,C_K$ be as in Assumption~\ref{ass2IHM}.
Then for a locally Lipschitz Markov policy 
$\pi:\R^d\to A$, 
such that
$d(\pi(x),\pi(y))\leq C_K \|x-y\|$,
$x,y\in D_K$,
the following holds:
$V_\pi\in\Sc_{B_K,K}$.
\end{ass}

\begin{remark}
Non-trivial problem data that satisfy Assumptions~\ref{ass1IHM}--\ref{ass3IHM}
are described in Section~\ref{ch3EXA} below.
It is precisely these types of examples that motivated the form Assumptions~\ref{ass2_and_a_half_IHM}--\ref{ass3IHM} take.
Assumption~\ref{ass1IHM} is standard and Assumptions~\ref{ass2_and_a_half_IHM}--\ref{ass2IHM} 
concern only the deterministic data specifying the problem. 
Assumption~\ref{ass3IHM} essentially states that $\|\he V_\pi\|$
has a prescribed bound on the ball $D_K$
if the coefficients of the PDE in Proposition~\ref{operatorIHM}
have a prescribed Lipschitz constant. 
Schauder's boundary estimates for elliptic PDEs~\cite[p.~86]{FriedmanParabolic}
suggest that this requirement is both natural and feasible. In fact, Assumption~\ref{ass3IHM}
may follow from assumptions of the type~\ref{ass1IHM}--\ref{ass2IHM} on the problem data.
This is left for future research. 
\end{remark}

Proposition~\ref{subsequenceIHM} and Theorems~\ref{limitsIHM} and~\ref{verificationIHM}, proved in  
Section~\ref{subsec:Proofs_Multidim} below, show that~\eqref{piaIHM} converges.

\begin{proposition}
\label{subsequenceIHM}
Let Assumptions~\ref{ass1IHM}--\ref{ass3IHM} hold. 
Then there exists a subsequence of
$\{ \pi_{N} \}_{N \in \N}$
that converges uniformly on every compact subset of
$\R^d$ to a locally Lipschitz Markov policy. 
\end{proposition}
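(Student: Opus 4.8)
The plan is to prove that, on each closed ball $D_K$, the policies $\pi_n$ produced by~\eqref{piaIHM} are equi-Lipschitz with a Lipschitz constant depending on $K$ but not on $n$, and then to combine the Arzelà--Ascoli theorem with a diagonal extraction over $K=1,2,3,\dots$. The starting observation is that along a run of~\eqref{piaIHM} one has $\pi_{n+1}=I_{V_{\pi_n}}$ (choosing the locally Lipschitz selection, as in Remark~\ref{rem:Well_defined_alg}) with $V_{\pi_n}\in\C^2(\R^d)$ by Proposition~\ref{operatorIHM}; this is what allows Assumptions~\ref{ass2IHM} and~\ref{ass3IHM} to be chained.

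First I would fix $K>0$, take the constants $B_K,C_K$ from Assumption~\ref{ass2IHM}, and show by induction on $n$ that $V_{\pi_n}\in\Sc_{B_K,K}$ for every $n\geq0$ and that $d_A(\pi_n(x),\pi_n(y))\leq C_K\|x-y\|$ for all $x,y\in D_K$ and every $n\geq1$. For the base case, $\pi_0$ is constant, so its Lipschitz constant on $D_K$ is $0\leq C_K$, and Assumption~\ref{ass3IHM} gives $V_{\pi_0}\in\Sc_{B_K,K}$. For the inductive step, assuming $V_{\pi_n}\in\Sc_{B_K,K}$, Assumption~\ref{ass2IHM} applied to $h=V_{\pi_n}$ bounds the Lipschitz constant of $\pi_{n+1}=I_{V_{\pi_n}}$ on $D_K$ by $C_K$; since $\pi_{n+1}$ is moreover locally Lipschitz, Assumption~\ref{ass3IHM} then yields $V_{\pi_{n+1}}\in\Sc_{B_K,K}$, closing the induction.

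Next, for each fixed $K$ the family $\{\pi_n|_{D_K}\}_{n\in\N}$ consists of $C_K$-Lipschitz maps from the compact set $D_K$ into the compact metric space $(A,d_A)$; it is therefore equicontinuous and pointwise relatively compact, so the Arzelà--Ascoli theorem (for maps into a compact metric space) extracts from any sequence in it a subsequence converging uniformly on $D_K$ to a $C_K$-Lipschitz limit. Applying this successively with $K=1,2,\dots$ to nested subsequences of $\{\pi_N\}_{N\in\N}$ and passing to the diagonal subsequence produces a single subsequence that converges uniformly on $D_K$ for every $K$, hence uniformly on every compact subset of $\R^d$, to a limit $\pi_\infty:\R^d\to A$. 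Since $\pi_\infty$ is a (pointwise, and in fact uniform) limit of $C_K$-Lipschitz maps on each $D_K$, it is $C_K$-Lipschitz on $D_K$ for every $K$, hence locally Lipschitz; by the discussion following Assumption~\ref{ass1IHM} this makes $\pi_\infty$ a Markov policy, which is what is required.

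The only step with genuine content is the bootstrap induction, and it is not so much an obstacle as a point requiring care: everything hinges on the fact that $B_K$ and $C_K$ in Assumptions~\ref{ass2IHM}--\ref{ass3IHM} depend on $K$ alone and not on $n$, so that the alternation from $\pi_n$ to $V_{\pi_n}$ to $\pi_{n+1}$ never leaves the fixed class of maps that are $C_K$-Lipschitz on $D_K$, respectively the fixed class $\Sc_{B_K,K}$. Once this $n$-uniform equicontinuity is established, the Arzelà--Ascoli argument and the diagonal extraction are entirely routine; the assumptions have in effect been designed precisely so that this scheme goes through.
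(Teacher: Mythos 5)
Your proposal is correct and follows essentially the same route as the paper: the bootstrap induction chaining Assumptions~\ref{ass2IHM} and~\ref{ass3IHM} to obtain $n$-uniform Lipschitz constants on each $D_K$, followed by the Arzelà--Ascoli argument (Lemma~\ref{AA} in the paper) and a diagonal extraction over an exhausting sequence of balls. The paper phrases the bootstrap a little less formally ("iterating this argument") and uses balls of radius $K_k=2^kK_0$ rather than $K=1,2,\dots$, but these are cosmetic differences.
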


Let $\pi_{\lim}:\R^d\to A$
denote a locally Lipschitz Markov policy 
that is a locally uniform limit of a subsequence of 
$\{ \pi_{N} \}_{N \in \N}$.
By Propositions~\ref{operatorIHM}, 
$V_{\pi_{\lim}}$ 
is a well-defined function in  $\C^2(\R^d)$
that solves the corresponding Poisson equation. 
However, 
since 
$\pi_{\lim}$
clearly depends on its defining subsequence,  so may
$V_{\pi_{\lim}}$. 
Furthermore,
$V_{\lim}$
may depend on the choice of 
$\pi_0$
in~\eqref{piaIHM}.
But this is not so,
since
$V_{\lim}$
equals both the value function $V$ 
and
the payoff for the policy $\pi_{\lim}$.

\begin{thm}
\label{limitsIHM}
Under Assumptions~\ref{ass1IHM}--\ref{ass3IHM}, 
the equality  $V_{\lim} = V_{\pi_{\lim}}$
holds
on $\R^d$
for a policy
$\pi_{\lim}$. 
\end{thm}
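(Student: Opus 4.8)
The plan is to show the two inequalities $V_{\lim}\geq V$ and $V_{\lim}\leq V_{\pi_{\lim}}$ separately, and then close the circle using the trivial bound $V\leq V_{\pi_{\lim}}$ (which holds since $\pi_{\lim}$ is a particular admissible Markov policy). The first inequality $V_{\lim}\geq V$ is immediate: each $V_{\pi_N}$ is the payoff of the admissible policy $\pi_N\in\A(x)$, so $V_{\pi_N}\geq V$ pointwise, and letting $N\to\infty$ gives $V_{\lim}\geq V$. The substance of the proof is the reverse-type inequality $V_{\lim}\leq V_{\pi_{\lim}}$, which together with $V_{\pi_{\lim}}\geq V\geq$ (limit of payoffs is $\geq V$, but also each payoff $\geq V_{\lim}$?) — more precisely, once we have $V_{\lim}\leq V_{\pi_{\lim}}$ and $V_{\lim}\geq V$, combined with $V\leq V_{\pi_{\lim}}$, we need one more relation to force equality. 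The cleanest route is: establish $V_{\lim}\leq V_{\pi_{\lim}}$, then observe $V_{\pi_{\lim}}\leq V_{\pi_{N_k}}$ is \emph{not} generally available, so instead I would show directly that $V_{\lim}$ solves the Poisson equation for $\pi_{\lim}$, forcing $V_{\lim}=V_{\pi_{\lim}}$ by the uniqueness in Proposition~\ref{operatorIHM}; and separately that $V_{\lim}=V$ via a verification-type argument using the HJB inequality satisfied in the limit.

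The main work, then, is a stability/passage-to-the-limit argument along the convergent subsequence $\pi_{N_k}\to\pi_{\lim}$. First I would extract, using Proposition~\ref{subsequenceIHM} together with Assumption~\ref{ass3IHM}, that the functions $V_{\pi_{N_k}}$ lie in a fixed set $\Sc_{B_K,K}$ on each ball $D_K$ (for a suitable subsequence one may have to shrink the policy's Lipschitz constant to the $C_K$ of Assumption~\ref{ass2IHM}, then invoke Assumption~\ref{ass3IHM}); hence $\{V_{\pi_{N_k}}\}$, together with their gradients and Hessians, are locally uniformly bounded. By the $(1/2)$-H\"older bound on $\he V_{\pi_{N_k}}$ from Proposition~\ref{operatorIHM} and Arzel\`a–Ascoli, I would pass to a further subsequence so that $V_{\pi_{N_k}}\to V_{\lim}$ in $\C^2_{\mathrm{loc}}(\R^d)$ (the pointwise limit is already identified as $V_{\lim}$ by~\eqref{eq:V_lim_def}, so this upgrades the mode of convergence). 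The coefficients $\s_{\pi_{N_k}},\mu_{\pi_{N_k}},\alpha_{\pi_{N_k}},f_{\pi_{N_k}}$ converge locally uniformly to $\s_{\pi_{\lim}},\mu_{\pi_{\lim}},\alpha_{\pi_{\lim}},f_{\pi_{\lim}}$ because $\pi_{N_k}\to\pi_{\lim}$ locally uniformly and $\s,\mu,\alpha,f$ are continuous (Lipschitz on compacts). Therefore, passing to the limit in the PDE $L_{\pi_{N_k}}V_{\pi_{N_k}}-\alpha_{\pi_{N_k}}V_{\pi_{N_k}}+f_{\pi_{N_k}}=0$, we obtain $L_{\pi_{\lim}}V_{\lim}-\alpha_{\pi_{\lim}}V_{\lim}+f_{\pi_{\lim}}=0$; since $V_{\lim}$ is bounded, the uniqueness statement in Proposition~\ref{operatorIHM} yields $V_{\lim}=V_{\pi_{\lim}}$, which is the claim.

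The anticipated main obstacle is the bootstrapping of the convergence mode from pointwise to $\C^2_{\mathrm{loc}}$: a priori we only know $V_{\pi_N}\downarrow V_{\lim}$ pointwise and that each $V_{\pi_{N_k}}$ lies in $\Sc_{B_K,K}$, so the gradients and Hessians are bounded but we do not yet know they converge to $\gr V_{\lim}$ and $\he V_{\lim}$. Here the $(1/2)$-H\"older equicontinuity of the Hessians (Proposition~\ref{operatorIHM}) is essential: Arzel\`a–Ascoli gives a $\C^2_{\mathrm{loc}}$-convergent subsequence whose limit must, by uniqueness of limits, coincide with the pointwise limit $V_{\lim}$ and its derivatives — so $V_{\lim}\in\C^2(\R^d)$ and the derivatives pass to the limit along the chosen subsequence. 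A secondary subtlety is arranging Assumption~\ref{ass3IHM} to apply: one needs the Lipschitz constants of the $\pi_{N_k}$ on $D_K$ to be controlled by the $C_K$ appearing there, which is exactly how Proposition~\ref{subsequenceIHM}'s proof is engineered (via Assumptions~\ref{ass2_and_a_half_IHM}--\ref{ass2IHM}), so one invokes that construction rather than an arbitrary convergent subsequence. Once $V_{\lim}=V_{\pi_{\lim}}$ is in hand, the identification $V_{\lim}=V$ — needed for the remark that $V_{\lim}$ is the value function — is deferred to Theorem~\ref{verificationIHM}, so for the present statement it suffices to stop at $V_{\lim}=V_{\pi_{\lim}}$.
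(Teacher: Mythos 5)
Your approach is correct in spirit but is genuinely different from the paper's, and it has one small gap that needs plugging.

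The paper proves the statement probabilistically: it applies It\^o's formula to the process
$S^k_t := \int_0^t \mathrm{e}^{-\int_0^s\alpha_{\pi_{n_k}}(X^{\pi_{\lim},x})}f_{\pi_{n_k}}(X^{\pi_{\lim},x}_s)\,\mathrm{d}s + \mathrm{e}^{-\int_0^t\alpha_{\pi_{n_k}}(X^{\pi_{\lim},x})}V_{\pi_{n_k}}(X^{\pi_{\lim},x}_t)$ ---
note that the state process is driven by the \emph{limit} policy $\pi_{\lim}$ while the discount/cost/payoff are evaluated along $\pi_{n_k}$ --- and, after invoking the Poisson equation for $V_{\pi_{n_k}}$, reduces the drift to $(L_{\pi_{\lim}}-L_{\pi_{n_k}})V_{\pi_{n_k}}$. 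This term is killed in the limit using only \emph{uniform boundedness} of $\gr V_{\pi_{n_k}}$ and $\he V_{\pi_{n_k}}$ on balls (from Schauder), together with the locally uniform convergence $\mu_{\pi_{n_k}}\to\mu_{\pi_{\lim}}$, $\s_{\pi_{n_k}}\to\s_{\pi_{\lim}}$. Taking $t\wedge K\to\infty$ in the resulting stochastic representation then gives $V_{\lim}=V_{\pi_{\lim}}$. Your route is a PDE stability argument: upgrade the pointwise convergence $V_{\pi_{N_k}}\to V_{\lim}$ to $\C^2_{\mathrm{loc}}$ convergence and pass to the limit in the Poisson equation, concluding by the uniqueness statement in Proposition~\ref{operatorIHM}. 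Both are correct; the paper's version asks less of elliptic regularity theory (just $C^0$ bounds on derivatives, no compactness of the Hessians), while yours is shorter once the $\C^2_{\mathrm{loc}}$ compactness is granted and immediately yields $V_{\lim}\in\C^2(\R^d)$ as a by-product.

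The gap: you invoke the $(1/2)$-H\"older continuity of $\he V_{\pi_{N_k}}$ from Proposition~\ref{operatorIHM} as the equicontinuity input for Arzel\`a--Ascoli, but that proposition asserts only that \emph{each} $\he V_\pi$ is $(1/2)$-H\"older on compacts --- the H\"older seminorm could a priori blow up along the sequence $k\to\infty$. Nor does Assumption~\ref{ass3IHM} help here: membership in $\Sc_{B_K,K}$ bounds $\|\gr V_\pi\|$ and $\|\he V_\pi\|$ pointwise, not the H\"older modulus of $\he V_\pi$. What you actually need is the \emph{interior Schauder estimate} (the elliptic version of~\cite[Thm.~15, p.~80]{FriedmanParabolic}, which the paper itself invokes in the proof of Theorem~\ref{verificationIHM}): since the $\pi_{N_k}$ share a common Lipschitz constant $C_K$ on $D_K$ (by the construction in Proposition~\ref{subsequenceIHM}), the coefficient families $\{\s_{\pi_{N_k}}\},\{\mu_{\pi_{N_k}}\},\{\alpha_{\pi_{N_k}}\},\{f_{\pi_{N_k}}\}$ are uniformly bounded in $C^{1/2}$ on compacts, $\{V_{\pi_{N_k}}\}$ is uniformly bounded in $C^0$ by Remark~\ref{rem:V_F_bounded}, and the ellipticity constant $\lambda$ is fixed; hence the interior Schauder estimate gives a $k$-uniform bound on $\|V_{\pi_{N_k}}\|_{C^{2,1/2}(D_{K'})}$ for $K'<K$. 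With that uniform estimate in hand, your Arzel\`a--Ascoli / diagonal extraction and the passage to the limit in the PDE go through, and the argument is complete.
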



\begin{thm}
\label{verificationIHM}
Let Assumptions~\ref{ass1IHM}--\ref{ass3IHM} hold. 
Then for every
$x \in \R^d$
and
$\Pi \in \A(x)$
the inequality
$V_{\lim}(x) \leq V_{\Pi}(x)$
holds.
Hence 
$V_{\lim}$
equals the value function $V$, does not depend on the choice of
$\pi_0$
in~\eqref{piaIHM}
and
$\pi_{\lim}$
is an optimal locally Lipschitz Markov policy for the control problem. 
\end{thm}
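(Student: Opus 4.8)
The plan is to argue by verification. First I would establish that $V_{\lim}\in\C^2(\R^d)$ is a supersolution of the Hamilton--Jacobi--Bellman equation, namely
\begin{equation*}
L_p V_{\lim}(x)-\alpha_p(x)V_{\lim}(x)+f_p(x)\geq 0\qquad\text{for all }x\in\R^d\text{ and }p\in A,
\end{equation*}
and then apply It\^o's formula with localisation to $\mathrm{e}^{-\int_0^t\alpha(X^{\Pi,x}_s,\Pi_s)\mathrm{d}s}V_{\lim}(X^{\Pi,x}_t)$ to deduce $V_{\lim}(x)\leq V_\Pi(x)$. Since every $\pi_N$ is admissible, $V_{\lim}=\lim_N V_{\pi_N}\geq V$ trivially, so this gives $V_{\lim}=V$; Theorem~\ref{limitsIHM} then yields $V_{\pi_{\lim}}=V_{\lim}=V$, hence the optimality of $\pi_{\lim}$ and the independence of $\pi_0$.

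\textbf{Step 1 ($\C^2_{\mathrm{loc}}$-convergence of the payoffs).} A straightforward induction starting from the constant $\pi_0$ and using Assumptions~\ref{ass2IHM} and~\ref{ass3IHM} (as in the proof of Proposition~\ref{subsequenceIHM}) shows that, for each $K>0$, every $\pi_N$ is a locally Lipschitz Markov policy that is $C_K$-Lipschitz on $D_K$ and every $V_{\pi_N}$ lies in $\Sc_{B_K,K}$. Hence the coefficients $\s_{\pi_N},\mu_{\pi_N},\alpha_{\pi_N},f_{\pi_N}$ of the Poisson equations of Proposition~\ref{operatorIHM} have H\"older norms on $D_K$ bounded uniformly in $N$, so interior Schauder estimates~\cite{FriedmanParabolic} make the Hessians $\{\he V_{\pi_N}\}_N$ equibounded and equicontinuous on every compact set. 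With the bound $\sup_N\|V_{\pi_N}\|_\infty\leq\|f\|_\infty/\epsilon_0$ and the Arzela--Ascoli theorem, $\{V_{\pi_N}\}_N$ is precompact in $\C^2_{\mathrm{loc}}(\R^d)$; since it converges pointwise to $V_{\lim}$ by~\eqref{eq:V_lim_def}, the convergence holds in $\C^2_{\mathrm{loc}}(\R^d)$. In particular $V_{\lim}\in\C^2(\R^d)$ and $g_N:=V_{\pi_N}-V_{\pi_{N+1}}\to0$ in $\C^2_{\mathrm{loc}}(\R^d)$.

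\textbf{Step 2 (the supersolution inequality).} Fix $x\in\R^d$ and $p\in A$. Using $p$ as a competitor in~\eqref{piaIHM}, the Poisson equation $L_{\pi_{N+1}}V_{\pi_{N+1}}-\alpha_{\pi_{N+1}}V_{\pi_{N+1}}+f_{\pi_{N+1}}=0$ (evaluated at $x$), and dividing by $S_{\pi_{N+1}(x)}(x)>0$ gives
\begin{equation*}
L_{\pi_{N+1}(x)}g_N(x)-\alpha_{\pi_{N+1}(x)}(x)\,g_N(x)\;\leq\;\frac{S_p(x)}{S_{\pi_{N+1}(x)}(x)}\bigl(L_pV_{\pi_N}(x)-\alpha_p(x)V_{\pi_N}(x)+f_p(x)\bigr).
\end{equation*}
The left-hand side tends to $0$ as $N\to\infty$, since $\s,\mu,\alpha$ are bounded (Assumption~\ref{ass1IHM}) and $g_N\to0$ in $\C^2_{\mathrm{loc}}(\R^d)$ by Step~1. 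The bounds $\epsilon_S<S<M_S$ of Assumption~\ref{ass2_and_a_half_IHM} keep $S_p(x)/S_{\pi_{N+1}(x)}(x)$ in a fixed compact subinterval of $(0,\infty)$, while $L_pV_{\pi_N}(x)-\alpha_p(x)V_{\pi_N}(x)+f_p(x)\to L_pV_{\lim}(x)-\alpha_p(x)V_{\lim}(x)+f_p(x)$ by Step~1. If this limit were negative, the right-hand side would stay below a fixed negative constant for all large $N$, contradicting the convergence of the left-hand side to $0$; hence $L_pV_{\lim}(x)-\alpha_p(x)V_{\lim}(x)+f_p(x)\geq0$.

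\textbf{Step 3 (verification and conclusion).} Fix $x\in\R^d$ and $\Pi\in\A(x)$, write $X:=X^{\Pi,x}$ and $\tau_n:=\inf\{t\geq0:\|X_t\|\geq n\}$. By It\^o's formula applied to $\mathrm{e}^{-\int_0^t\alpha(X_s,\Pi_s)\mathrm{d}s}V_{\lim}(X_t)$, Step~2 (at the point $X_t$ with $p=\Pi_t$) bounds the drift below by $-\mathrm{e}^{-\int_0^t\alpha(X_s,\Pi_s)\mathrm{d}s}f(X_t,\Pi_t)$; stopping at $\tau_n$, on which $\gr V_{\lim}$ is bounded so that the stochastic integral is a true martingale, and taking expectations gives
\begin{equation*}
V_{\lim}(x)\;\leq\;\E\Bigl[\mathrm{e}^{-\int_0^{t\wedge\tau_n}\alpha(X_s,\Pi_s)\mathrm{d}s}V_{\lim}(X_{t\wedge\tau_n})\Bigr]+\E\Bigl[\int_0^{t\wedge\tau_n}\mathrm{e}^{-\int_0^s\alpha(X_u,\Pi_u)\mathrm{d}u}f(X_s,\Pi_s)\,\mathrm{d}s\Bigr].
\end{equation*}
Letting $n\to\infty$ (non-explosion of $X$, so $\tau_n\to\infty$) and then $t\to\infty$, and using $\alpha>\epsilon_0>0$ together with the boundedness of $V_{\lim}$ and $f$, dominated convergence yields $V_{\lim}(x)\leq V_\Pi(x)$. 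Taking the infimum over $\Pi\in\A(x)$ and combining with $V_{\lim}\geq V$ gives $V_{\lim}=V$, which in particular does not depend on $\pi_0$; by Theorem~\ref{limitsIHM}, $V_{\pi_{\lim}}=V$, so $\pi_{\lim}$ is an optimal locally Lipschitz Markov policy. The step I expect to be the main obstacle is Step~1 --- converting the equi-Lipschitz control on the policies into uniform $\C^2$ (and H\"older-Hessian) control on the payoffs via Schauder theory, which is precisely what legitimises passing to the limit inside~\eqref{piaIHM}; the localisation in Step~3, needed because $\gr V_{\lim}$ may not be globally bounded and $\Pi$ need not be Markov, is routine.
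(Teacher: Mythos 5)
Your proof is correct, and it takes a genuinely different and arguably cleaner route than the paper's. The paper works with a locally uniformly convergent subsequence of the \emph{pairs} $\{(\pi_{N+1},\pi_N)\}_N$, applies It\^o's formula to $V_{\pi_{n_k}}$ along the path of the arbitrary policy $\Pi$, and then shows via Schauder estimates that $\mathcal{L}_{\pi_{n_k+1}}V_{\pi_{n_k}}\to 0$ uniformly on compacts so that the It\^o-formula expansion collapses to $V_{\lim}(x)\leq V_\Pi(x)$ in the limit; the supersolution inequality for $V_{\lim}$ is never isolated. You instead first extract a genuine PDE statement --- that $V_{\lim}\in\C^2(\R^d)$ satisfies the Hamilton--Jacobi--Bellman supersolution inequality $L_p V_{\lim}-\alpha_p V_{\lim}+f_p\geq 0$ pointwise for all $p\in A$ --- via the telescoping difference $g_N=V_{\pi_N}-V_{\pi_{N+1}}$ and the observation, which the paper does not make explicitly, that the \emph{full} sequence $V_{\pi_N}$ converges in $\C^2_{\mathrm{loc}}$ (precompactness from Schauder estimates combined with the monotone pointwise convergence pins down the unique limit). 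Once the supersolution inequality is in hand, the verification step is entirely standard and needs no information about the policies beyond admissibility. Both approaches use the same analytical inputs (the equi-Lipschitz policies of Assumptions~\ref{ass2IHM}--\ref{ass3IHM} feeding interior Schauder estimates); your decomposition separates the PDE analysis from the stochastic verification more cleanly, avoids the subsequence of pairs $(\tilde\pi_{\lim},\pi_{\lim})$ entirely, and produces the HJB inequality as a reusable intermediate result, at the cost of needing the $\C^2_{\mathrm{loc}}$ convergence of the full payoff sequence (rather than merely a subsequence) and the bound $\epsilon_S<S<M_S$ in a slightly different place.
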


\begin{remark}
The key technical issue in the proof of Theorem~\ref{verificationIHM}
is that the policies in the convergent subsequence constructed in the proof of Proposition~\ref{subsequenceIHM}
are not improvements of their predecessors (cf.~\eqref{piaIHM}). 
The idea of the proof is 
to work with a convergent subsequence of the pairs of policies
$\{(\pi_N,\pi_{N+1})\}_{N\in\N}$, 
where 
$\{\pi_N\}_{N\in\N}$ 
is  produced by the~\eqref{piaIHM}
(see Section~\ref{subsec:Proofs_Multidim} for details).
\end{remark}

\section{The one-dimensional case}
\label{ch3IHO}


There are two reason for considering the one-dimensional control problem in its own right. \\
(A) The canonical choice for the scaling function $S:=1/\s^2$ simplifies the~\eqref{piaIHM}
to 
\begin{equation}
\label{piaIHO}
\pi_{n+1}(x) \in \argmin_{p \in A} 
\left\{ (\mu(x,p) V_{\pi_{n}}'(x) - \alpha(x,p)V_{\pi_{n}}(x) + f(x,p))/\s^2(x,p) \right\},
\end{equation}
by removing the second derivative of the payoff function 
$V_{\pi_{n}}$
from the minimisation procedure. 
This reduction appears to make the numerical implementation of
the~\eqref{piaIHM}  converge extremely fast: 
in the example in Section~\ref{subsec:Numerics} below
the optimal payoff and policy are obtained in
fewer than half a dozen iterations. \\
(B) It is natural 
to control the process 
$X^{\Pi,x}$ 
only up to its first exit from an interval 
$(a,b)$, where 
$a,b \in [-\infty,\infty]$,
and generalise the payoff as follows:
\begin{align*} & 
V_\Pi(x) := \E \left( \int_0^{\tau_a^b( X^{\Pi,x})} \mathrm{e}^{-\int_0^t \alpha_{\Pi_s}(X^{\Pi,x}_s)
\mathrm{d}s} f_{\Pi_s}(X^{\Pi,x}_t) \mathrm{d}t 
  + \mathrm{e}^{-\int_0^{\tau_a^b( X^{\Pi,x})} \alpha_{\Pi_t}(X^{\Pi,x}_t) \mathrm{d}t} g(X^{\Pi,x}_{\tau_a^b(X^{\Pi,x})})
 \right).
\end{align*}
Here
$\mu,\s,\alpha,f : (a,b) \times A \to \R$
are measurable 
with the same notational 
convention as in Section~\ref{ch3IHM} ($f_p(x)=f(x,p)$ \textit{etc.}).
Furthermore, 
$\Pi \in \A(x)$
if
$X^{\Pi,x}$
follows SDE~\eqref{sdeIHM}
on the stochastic interval $[0,\tau_a^b(X^{\Pi,x}))$,
where
$\tau_a^b(X^{\Pi,x}):= \inf \{ t \geq 0;\; X^{\Pi,x}_t \in\{a,b\} \}$ ($\inf \emptyset = \infty$),
and 
$X^{\Pi,x}_{\tau_a^b(X^{\Pi,x})}=X^{\Pi,x}_t$ for $\tau_a^b(X^{\Pi,x})\leq t$
(i.e. $\Pi_t$, $t\in[\tau_a^b(X^{\Pi,x}),\infty)$, are irrelevant for $V_\Pi(x)$).
Pick an arbitrary function 
$g : \{a,b\} \cap \R \to \R$
and set the control problem as in Section~\ref{ch3IHM} with $\R^d$ substituted by $(a,b)$.

\begin{remark}
\label{rem:Improtant}
In Assumptions~\ref{ass1IHM}--\ref{ass3IHM}  we substitute $\R^d$ with $(a,b)$.
In particular, inequality~\eqref{lambda} in Assumption~\ref{ass1IHM} takes the form
$
\s^2(x,p)\geq \lambda
$
for all $x \in(a,b)$, $p \in A$. 
Assumption~\ref{ass1IHM}
hence implies the requirement on the scaling function $S=1/\s^2$ in 
Assumption~\ref{ass2_and_a_half_IHM}. 
In Assumptions~\ref{ass2IHM}--\ref{ass3IHM}, the family of closed balls $(D_K)_{K>0}$
in $\R^d$ is substituted with a family of compact intervals $(D_K)_{K>0}$
in $(a,b)$, such that $\cup_{K>0} D_K = (a,b)$ and, if $K'<K$, 
$D_{K'}$ is contained in the
interior of $D_K$.
\end{remark}

\begin{remark}
On the event 
$\{\tau_a^b(X^{\Pi,x})=\infty\}$
we take
$g(X^{\Pi,x}_{\tau_a^b(X^{\Pi,x})})/\exp(\int_0^{\tau_a^b( X^{\Pi,x})} \alpha_{\Pi_t}(X^{\Pi,x}_t)\mathrm{d}t)=0$,
 since by Assumption~\ref{ass1IHM} the integral is infinite. 
Note also that on this event $X^{\Pi,x}_{\tau_a^b(X^{\Pi,x})}$ may not be defined.
Moreover, if $\{a,b\} \cap \R=\emptyset$, then by Assumption~\ref{ass1IHM} we have
$\tau_a^b(X^{\Pi,x})=\infty$ a.s.
\end{remark}

A measurable function
$\pi : (a,b) \to A$
is a \emph{Markov policy} if for every
$x \in (a,b)$
there exists a 
process
$X^{\pi,x} = \left( X^{\pi,x}_t \right)_{t \geq 0}$
satisfying SDE~\eqref{sde2IHM}
on the stochastic interval 
$[0,\tau_a^b( X^{\pi,x}))$
and 
$X_t^{\pi,x} = X_{\tau_a^b( X^{\pi,x})}^{\pi,x}$  
if $\tau_a^b( X^{\pi,x} ) \leq t < \infty$.
If 
$\pi$
is
a Markov policy,
then
$\pi(X^{\pi,x}) := \left( \pi (  X^{\pi,x}_t ) \right)_{t \geq 0} \in \A(x)$
for every
$x \in (a,b)$,
where we pick arbitrary elements in $A$
for  the values 
$\pi(a)$
and 
$\pi(b)$
if
$a > -\infty$ and $b < \infty$, respectively.
We use analogous notation to that in~\eqref{eq:Markov_payoff}, e.g. 
$L_\pi h := \frac{1}{2}\s_\pi^2 h'' + \mu_\pi h'$ for any $h\in \C^2((a,b))$.

Any locally Lipschitz function 
$\pi : (a,b) \to A$
is a Markov policy, since the Engelbert-Schmidt conditions for the existence and uniqueness of the
solution of the corresponding SDE (see e.g.~\cite[Sec.~5.5]{Karatzas}) are satisfied by
Assumption~\ref{ass1IHM}.
By substituting the state space $\R^d$ with the interval $(a,b)$
in 
Proposition~\ref{operatorIHM}, Theorem~\ref{decreasingIHM}, Proposition~\ref{subsequenceIHM} and  Theorems~\ref{limitsIHM} and~\ref{verificationIHM}
of Section~\ref{ch3IHM}, 
we obtain the results of the present section, which thus solve the control problem in the one-dimensional case. 
In the interest of brevity, we omit their statement. 
We stress that the main difference lies in the fact that the proofs, in Sections~\ref{subsec:aux_one_dim}
and~\ref{subsec:Proofs_OneDim} below, rely on the theory of ODEs and scalar SDEs. 
In particular, we need to prove that the payoff has a continuous extension to a finite boundary point of the
state space.




\section{Examples}
\label{ch3EXA}

\subsection{Data satisfying Assumptions~\ref{ass1IHM}--\ref{ass3IHM}} 
\label{subsec:Data_theory}
We now describe a class of models that provably satisfies Assumptions~\ref{ass1IHM}--\ref{ass3IHM}. 
The main aim in the present section is not to be exhaustive, but merely to demonstrate that the 
form (particularly) of Assumptions~\ref{ass2IHM}--\ref{ass3IHM}  
is natural in the context of control problems considered here.
The example we give is in dimension one. But it is clear from the construction below 
that it can easily be generalised.

Let $A:=[-a,a]$, for some constant $a>0$, and $\s,\mu, f, \alpha:\R\times [-a,a]\to\R$
be given by 
\begin{equation}
\label{eq:Example_Class}
\s(x,p):=\s_1(x),\quad \mu(x,p):=\mu_1(x)+p \mu_2,\quad f(x,p):= f_1(x) + f_2(p),\quad
\alpha(x,p)\equiv \alpha_0,
\end{equation}
where $\s_1,\mu_1,f_1\in \C^1(\R)$, $f_2\in\C^2((-a,a))$ is convex and symmetric (i.e. $f_2(p)=f_2(-p)$ for all $p\in A$) 
and $\mu_2$ and $\alpha_0$ are constants.  
For any $h\in\{\s_1,\mu_1,f_1,f_2\}$ (resp. $h'\in\{\s_1',\mu_1',f_1',f_2'\}$) let the positive constant
$C_h$ (resp. $C_h'$) satisfy $|h|\leq C_h$ (resp. $|h'|\leq C_h'$). 
In particular, we assume that the derivatives of 
$\s_1,\mu_1,f_1,f_2$ 
are bounded.
Moreover we may (and do) take 
$C_{f_2}':=f_2'(a)$.
Assume also that $\alpha_0>0$ and $\s_1^2>\lambda>0$ (so that As~\ref{ass1IHM} is satisfied)
and the scaling function $S\equiv1$.  
Then the following proposition holds.

\begin{proposition}
\label{dataIHO}
Assume
$\alpha_0> C_{\mu_1}'+ |\mu_2|(2+C_{f_1}'/C_{f_2}')$ 
and
$|\mu_2|B^2_\infty<L_{f_2}:=\inf_{p\in A} f_2''(p)$, where 
$$
B^2_\infty:=\left[2(C_{f_1}+C_{f_2})+(C_{\mu_1}+ a|\mu_2|)B^1_\infty\right]/\lambda\quad\text{and}\quad
B^1_\infty:=(C_{f_1}'+C_{f_2}')/(\alpha_0- C_{\mu_1}'- |\mu_2|),
$$
then  Assumptions~\ref{ass1IHM}--\ref{ass3IHM} hold.  
Moreover,
in Assumptions~\ref{ass2IHM}--\ref{ass3IHM}  we have
$B_K=(B^1_\infty,B^2_\infty)$ and $C_K=1$
for any 
$K>0$.
\end{proposition}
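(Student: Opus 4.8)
The plan is to verify Assumptions~\ref{ass1IHM}--\ref{ass3IHM} in turn for the data in~\eqref{eq:Example_Class}, the bulk of the work being the a priori bounds on $\|V_\pi'\|$ and $\|V_\pi''\|$ on the whole line that deliver Assumption~\ref{ass3IHM} with the stated constants $B^1_\infty$, $B^2_\infty$. Assumption~\ref{ass1IHM} is immediate: $\s,\mu,f,\alpha$ are $\C^1$ with bounded derivatives (hence Lipschitz on compacts, indeed globally Lipschitz in $x$), $\alpha\equiv\alpha_0>0$, and $\s_1^2>\lambda>0$ gives the ellipticity bound. (One should note $f$ need only be bounded on compacts here, or else additionally assume $f_1$ bounded; the constants $C_{f_1},C_{f_2}$ are used as bounds and the argument is local in $x$ anyway.) For Assumption~\ref{ass2_and_a_half_IHM} with $S\equiv 1$: given $h\in\C^2(\R)$, the quantity to be minimised over $p\in[-a,a]$ is $\tfrac12\s_1^2(x)h''(x)+(\mu_1(x)+p\mu_2)h'(x)-\alpha_0 h(x)+f_1(x)+f_2(p)$, i.e. $p\mapsto p\,\mu_2 h'(x)+f_2(p)$ up to an additive constant in $p$. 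Since $f_2$ is strictly convex on $(-a,a)$ (once $L_{f_2}>0$ is in force) this is strictly convex in $p$, so $I_h(x)$ is unique and given by the projection onto $[-a,a]$ of the solution of $f_2'(p)=-\mu_2 h'(x)$; by the implicit function theorem / monotonicity of $f_2'$ and $1$-Lipschitzness of truncation, $x\mapsto I_h(x)$ is as regular as $h'$, in particular locally Lipschitz, and one gets the explicit Lipschitz estimate $|I_h(x)-I_h(y)|\le (|\mu_2|/L_{f_2})\,\|h''\|_{[x,y]}\,|x-y|$.

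Next, Assumption~\ref{ass2IHM}: I would take $B_K=(B^1_\infty,B^2_\infty)$ (independent of $K$); then for $h\in\Sc_{B_K,K}$ we have $\|h''\|\le B^2_\infty$ on $D_K$, so the estimate above gives $|I_h(x)-I_h(y)|\le (|\mu_2|B^2_\infty/L_{f_2})|x-y|$, and the hypothesis $|\mu_2|B^2_\infty<L_{f_2}$ makes $C_K:=1$ an admissible (in fact generous) Lipschitz constant. This is where the second displayed hypothesis of the proposition is used.

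The heart of the matter is Assumption~\ref{ass3IHM}: for a locally Lipschitz Markov policy $\pi$ with $|\pi(x)-\pi(y)|\le|x-y|$ on each $D_K$ — hence $|\pi(x)|\le a$ always and $\pi$ globally $1$-Lipschitz — one must show $V_\pi\in\Sc_{B_\infty,K}$ for every $K$, i.e. the global bounds $\|V_\pi'\|_\infty\le B^1_\infty$ and $\|V_\pi''\|_\infty\le B^2_\infty$. By Proposition~\ref{operatorIHM}, $V_\pi\in\C^2(\R)$ is the unique bounded solution of $\tfrac12\s_\pi^2 V_\pi''+\mu_\pi V_\pi'-\alpha_0 V_\pi+f_\pi=0$. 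For the gradient bound I would differentiate the probabilistic representation $V_\pi(x)=\E\int_0^\infty e^{-\alpha_0 t}f_\pi(X^{\pi,x}_t)\,dt$ (or equivalently differentiate the ODE and treat $w:=V_\pi'$ as solving a first-order linear transport-type relation), using that $x\mapsto X^{\pi,x}_t$ is differentiable with derivative process $Y_t$ satisfying $dY_t=\sigma_\pi'(X_t)Y_t\,dB_t+\mu_\pi'(X_t)Y_t\,dt$, where $\mu_\pi'(x)=\mu_1'(x)+\pi'(x)\mu_2$ so $|\mu_\pi'|\le C_{\mu_1}'+|\mu_2|$ (using $|\pi'|\le1$); then $\E Y_t\le e^{(C_{\mu_1}'+|\mu_2|)t}$ dominates the stretching, while $|f_\pi'(x)|=|f_1'(x)+f_2'(\pi(x))\pi'(x)|\le C_{f_1}'+C_{f_2}'$, and the discount $\alpha_0$ beats the exponent precisely when $\alpha_0>C_{\mu_1}'+|\mu_2|$, yielding $\|V_\pi'\|_\infty\le (C_{f_1}'+C_{f_2}')/(\alpha_0-C_{\mu_1}'-|\mu_2|)=B^1_\infty$. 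For the Hessian bound I would rearrange the Poisson equation pointwise: $V_\pi''=\tfrac{2}{\s_1^2}\big(\alpha_0 V_\pi-\mu_\pi V_\pi'-f_\pi\big)$, bound $\|V_\pi\|_\infty\le\|f_\pi\|_\infty/\alpha_0\le(C_{f_1}+C_{f_2})/\alpha_0$ by the maximum principle, $|\mu_\pi(x)|\le C_{\mu_1}+a|\mu_2|$, $|f_\pi(x)|\le C_{f_1}+C_{f_2}$, and $\s_1^2\ge\lambda$, to get $\|V_\pi''\|_\infty\le \tfrac2\lambda\big(\alpha_0\cdot\tfrac{C_{f_1}+C_{f_2}}{\alpha_0}+(C_{\mu_1}+a|\mu_2|)B^1_\infty+(C_{f_1}+C_{f_2})\big)=\tfrac2\lambda\big(2(C_{f_1}+C_{f_2})+(C_{\mu_1}+a|\mu_2|)B^1_\infty\big)=B^2_\infty$. (The first displayed hypothesis $\alpha_0>C_{\mu_1}'+|\mu_2|(2+C_{f_1}'/C_{f_2}')$ is slightly stronger than $\alpha_0>C_{\mu_1}'+|\mu_2|$; I expect the extra slack is there to make the final verification in the PIA's convergence step — where policies from the improvement step are only known to be $1$-Lipschitz on compacts and one needs strict inequalities with a margin — go through cleanly, and I would track exactly where the sharper bound is consumed.) The main obstacle is this gradient estimate: making the differentiation-under-the-expectation rigorous (justifying differentiability of the flow, integrability, dominated convergence) and extracting the clean constant $B^1_\infty$ with the right dependence on $\alpha_0$ — once that is in hand, the Hessian bound is a one-line consequence of the equation and the maximum principle, and Assumptions~\ref{ass2_and_a_half_IHM}--\ref{ass2IHM} follow from the elementary convex-optimisation computation above.
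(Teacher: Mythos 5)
Your proposal follows essentially the same route as the paper: Assumptions~\ref{ass1IHM}--\ref{ass2IHM} by direct convex-optimisation computations, Assumption~\ref{ass3IHM} via differentiating the probabilistic representation $V_\pi(x)=\E\int_0^\infty e^{-\alpha_0 t}f_\pi(X^{\pi,x}_t)\,dt$ using the stochastic flow / first-variation process $Y$ (the paper cites~\cite[Sec.~V.10, Thm~49]{Protter} and~\cite[Prop.~3.2]{Fournie_et_al} for exactly this), and the Hessian bound by rearranging the Poisson equation.

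The one substantive error is your attribution of the stronger hypothesis $\alpha_0> C_{\mu_1}'+ |\mu_2|(2+C_{f_1}'/C_{f_2}')$. You guess it provides ``slack'' for the later verification step; in fact a short computation shows it is \emph{equivalent} to $|\mu_2|B^1_\infty<C_{f_2}'=f_2'(a)$. This is precisely what the paper uses: for $h\in\Sc_{B_\infty,K}$ it ensures $|\mu_2 h'(x)|<f_2'(a)$, so the unconstrained minimiser of $p\mapsto p\mu_2 h'(x)+f_2(p)$ always lies strictly inside $[-a,a]$, i.e. $I_h(x)=(f_2')^{-1}(-\mu_2 h'(x))$ with no truncation. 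Your version, which allows the projection onto $[-a,a]$, does yield the Lipschitz bound $|I_h(x)-I_h(y)|\le (|\mu_2|/L_{f_2})\|h''\||x-y|$, but only as a Lipschitz estimate: at the transition to the truncated regime the policy need not remain $\C^1$. The paper's no-truncation argument guarantees that every policy the algorithm produces is $\C^1$, which is then needed to make the stochastic-flow differentiation (requiring locally Lipschitz derivatives of $\mu_\pi$, $\s_\pi$) legitimate. So the stronger hypothesis is load-bearing in exactly the place you treat as a throwaway remark, and your ``projection'' variant would need an additional smoothing or weak-derivative argument to reach Assumption~\ref{ass3IHM} through the flow estimate. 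Also note your Hessian display ends with an equality to $B^2_\infty$ that is off by a factor of $2$ relative to the stated definition of $B^2_\infty$ (you obtain $\tfrac{2}{\lambda}[\ldots]$, not $\tfrac{1}{\lambda}[\ldots]$); this is a harmless constant slip, and indeed the paper does not spell out that step, but it should be corrected for the claimed identity.
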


\begin{remark}
It is clear that the assumptions in Proposition~\ref{dataIHO} define a non-empty subclass of models~\eqref{eq:Example_Class}.
Moreover, these assumptions are much stronger than what is required by our general Assumptions~\ref{ass2IHM}--\ref{ass3IHM} 
since the proposition yields global (rather than local) bounds on the derivatives of the payoff functions 
and the Lipschitz coefficients of the policies 
arising in~\eqref{piaIHM}. 
\end{remark}

\begin{proof}
Pick $h\in\C^2(\R)$, such that 
$|h'(x)|<B_\infty^1$ and $|h''(x)|<B_\infty^2$ 
for all $x\in\R$.
Then the function 
$I_h$ 
in As~\ref{ass2IHM}
satisfies
$I_h(x)=\argmin_{p\in A}\{p\mu_2 h'(x)+f_2(p)\}$.
By assumption we have 
$$
|\mu_2 h'(x)|\leq |\mu_2| B_\infty^1<C'_{f_2}=f_2'(a),\qquad\text{implying}\quad
I_h(x)=(f_2')^{-1}(-\mu_2 h'(x))\quad \forall x\in\R.
$$
Differentiate $I_h$ to obtain 
$|I_h'(x)|\leq |\mu_2|B_\infty^2/L_{f_2}<1$, $x\in\R$,
and note that Assumptions~\ref{ass2_and_a_half_IHM}--\ref{ass2IHM} follow.

We now establish As~\ref{ass3IHM}. The idea is to 
start with any policy $\pi:\R\to A$ in $\C^2(\R)$, such that its derivative satisfies $|\pi'|\leq 1$ on all of $\R$ (e.g. a constant policy),
and apply stochastic flow of diffeomeorphisms~\cite[Sec.~V.10]{Protter} to deduce the necessary regularity 
of the payoff function $V_\pi$.
In the notation from~\eqref{eq:Markov_payoff}, we have
$|\mu_\pi'|\leq C_{\mu_1}'+ |\mu_2|$ 
and
$|\s_\pi'|=|\s_1'|\leq C_{\s_1}'$. 
Hence, for each $x\in\R$, the stochastic exponential $Y=(Y_t)_{t\in\R_+}$, given by
$$
Y_t = 1+\int_0^t \mu_\pi'(X^{\pi,x}_s)Y_s\mathrm{d}s + \int_0^t\s_\pi'(X^{\pi,x}_s)Y_s\mathrm{d}W_s,
$$
exists (we suppress the dependence on $x$ (and $\pi$) from the notation). 
Since the coefficients SDE~\eqref{sde2IHM} are in $\C^1(\R)$ with bounded and locally Lipschitz
first derivative,
\cite[Sec.~V.10, Thm~49]{Protter} implies that
the flow of controlled processes $\{X^{\pi,x}\}_{x\in\R}$  may be constructed on the single 
probability space so that it is smooth in the initial condition $x$ with $\frac{\partial }{\partial x}X^{\pi,x}=Y$. 
The upshot here is that, by the argument  in the proof of~\cite[Prop.~3.2]{Fournie_et_al},
 we obtain a stochastic representation for the derivative 
$\frac{\partial}{\partial x} \E f_\pi(X^{\pi,x}_t)= \E[ Y_t f_\pi'(X^{\pi,x}_t)]$ for every $t\in\R_+$.
Since
$Y_t=M_t \exp \int_0^t\mu_\pi'(X^{\pi,x}_s)\mathrm{d}s$,
where the stochastic exponential $M=\mathcal{E}(\int_0^\cdot\s_\pi'(X^{\pi,x}_s)\mathrm{d}W_s)$
is a true martingale by Novikov's condition,
the following inequality holds:
$\E Y_t \leq \exp( t(C_{\mu_1}'+ |\mu_2|))$.
Since
$\alpha_0>C_{\mu_1}'+ |\mu_2|$ by assumption
and
the inequality $|f_\pi'|\leq C_{f_1}'+ C_{f_2}'$ holds,
we have
$|\E\int_0^\infty \mathrm{e}^{-\alpha_0 s} Y_s f_\pi'(X^{\pi,x}_s) \mathrm{d}s|< B_\infty^1$ 
for all $x\in\R$.

Recall  that
$V_\pi(x)=\E\int_0^\infty \mathrm{e}^{-\alpha_0 s} f_\pi(X^{\pi,x}_s) \mathrm{d}s$.
By~\cite[Sec.~V.8, Thm~43]{Protter},
the family of random variables indexed by $\delta\in(0,1)$,
$$\frac{1}{\delta} \int_0^\infty \mathrm{e}^{-\alpha_0 s} |f_\pi(X^{\pi,x+\delta}_s)-f_\pi(X^{\pi,x}_s)| \mathrm{d}s 
\leq 
\frac{1}{\delta} (C_{f_1}'+ C_{f_2}')\int_0^\infty \mathrm{e}^{-\alpha_0 s} |X^{\pi,x+\delta}_s-X^{\pi,x}_s| \mathrm{d}s,$$
is uniformly integrable. 
Hence 
$\lim_{\delta\to0} (V_\pi(x+\delta)-V_\pi(x))/\delta$
takes the form
$$
V_\pi'(x)=\E\int_0^\infty \mathrm{e}^{-\alpha_0 s} Y_s f_\pi'(X^{\pi,x}_s) \mathrm{d}s,\quad\text{implying }\quad
|V_\pi'(x)|< B_\infty^1 \quad\text{for all $x\in\R$}.
$$
This inequality, the fact
$\s_1^2>\lambda$
and Proposition~\ref{operatorIHM}
imply 
$|V_\pi''|<B_\infty^2$, concluding the proof.
\end{proof}

\begin{remark}
The process $Y$ in the proof of Proposition~\ref{dataIHO}
exists in the multidimensional setting, see~\cite[Sec.~V.10, Thm~49]{Protter}.
Hence the same argument works in higher dimensions if we can deduce a bound
on the Hessian of the payoff function from the PDE in 
Proposition~\ref{operatorIHM}.
\end{remark}

\subsection{Numerical examples}
\label{subsec:Numerics}

Consider the one-dimensional control problem:
$A = [-1,1]$,
$a = -10$,
$b = 10$,
$g(a) := a^2$,
$g(b) := b^2$,
$\s(x,p) := 1$,
$\mu(x,p) := p$,
$\alpha(x,p) := 1$
and
$f(x,p) := x^2 + p^2$,
which is in the class discussed in
Section~\ref{subsec:Data_theory}.
Explicitly, we seek to compute 
$\inf_{\Pi \in \A(x)} V_\Pi(x)$
for every $x \in (-10,10)$, 
where the payoff $V_\Pi(x)$ of  a policy $\Pi$ is defined in Section~\ref{ch3IHO}.

We implemented~\eqref{piaIHM}, with the main step given by~\eqref{piaIHO}, in Matlab. The payoff function at each step is
obtained as the solution to the differential equation from Proposition
\ref{operatorIHM} with the boundary conditions given by the function $g$.
The new policy at each step can be calculated explicitly (cf.\ the proof of Proposition~\ref{dataIHO} above).
Figures~\ref{fig1} and~\ref{fig2} 
graph the payoff functions and the policies (colour coded).
The initial policy 
$\pi_0\equiv 1$  and its payoff correspond to the blue graphs. 

\begin{figure}[!htb]
\vspace{-0.3\baselineskip}
\centering
\begin{minipage}{.4\textwidth}
  \captionsetup{width=0.81\textwidth}
  \centering
  \includegraphics[trim = 4.3cm 10cm 4.3cm 9.5cm, clip, width=\linewidth]{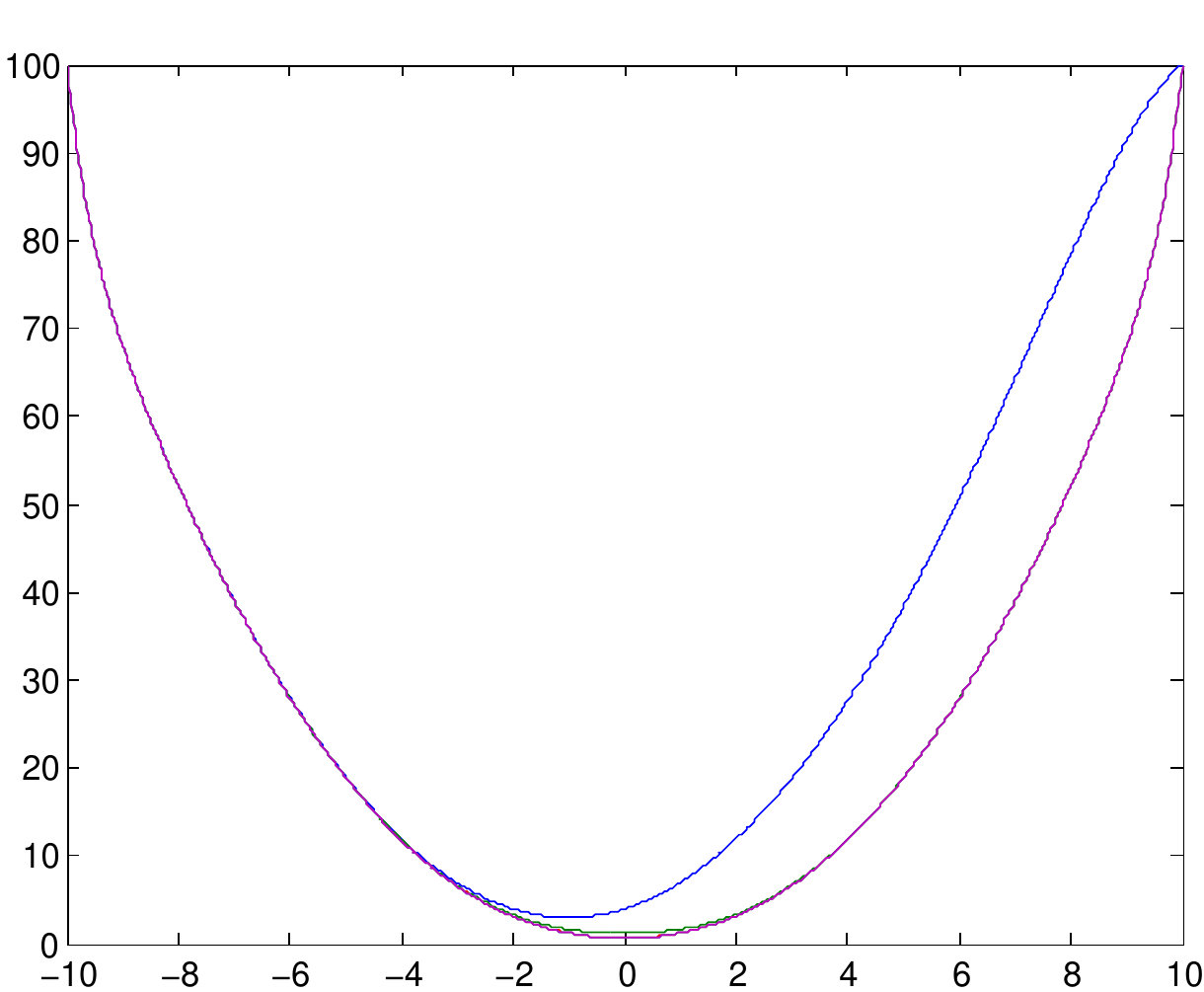}
  \caption{The graphs of $V_{\pi_n}$ for $n \in \{ 0,1,2,3,4 \}$.}
  \label{fig1}
\end{minipage}
\qquad
\begin{minipage}{.4\textwidth}
  \captionsetup{width=0.81\textwidth}
  \centering
  \includegraphics[trim = 4.3cm 10cm 4.3cm 9.5cm, clip, width=\linewidth]{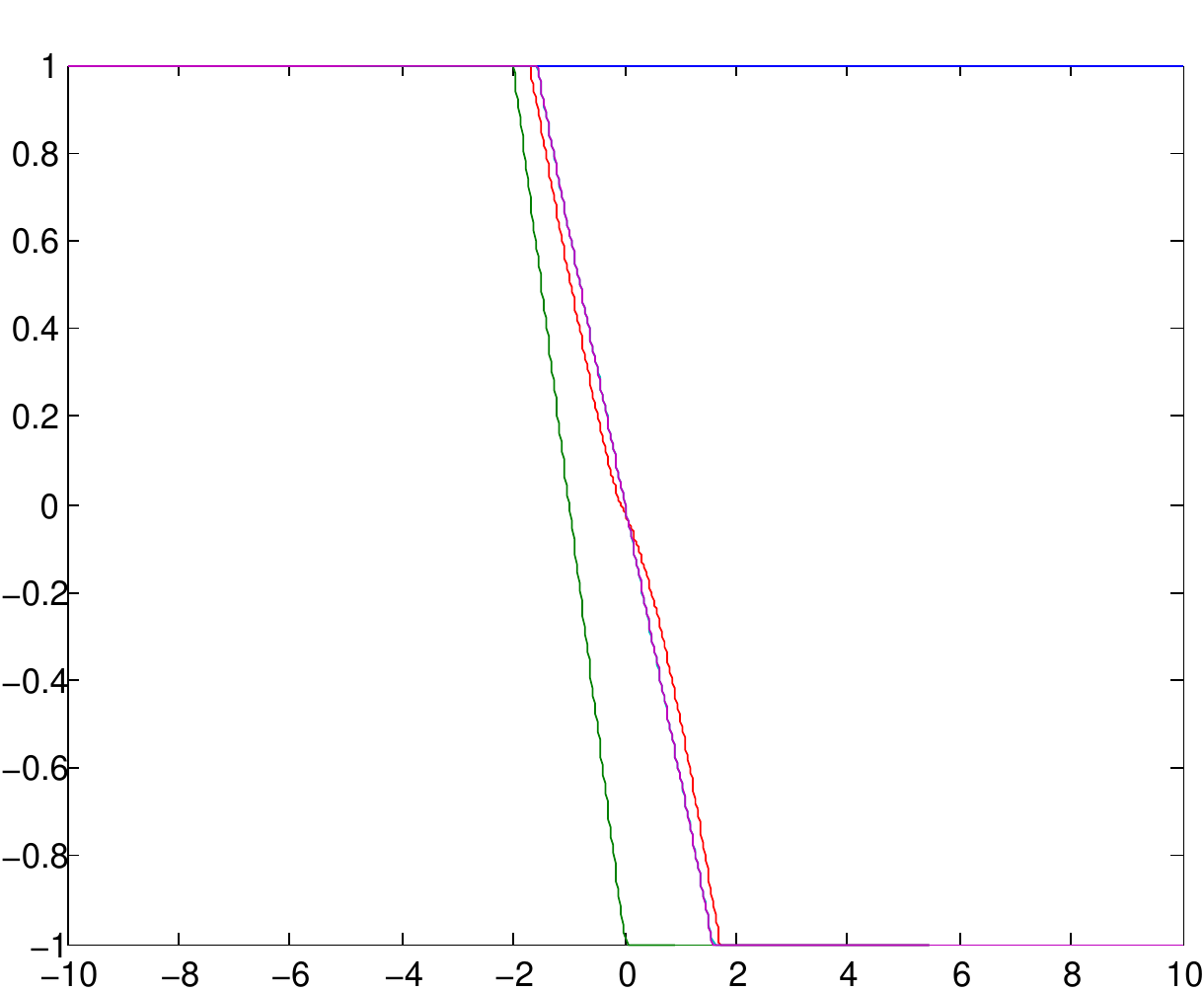}
  \caption{The graphs of $\pi_n$ for $n \in \{ 0,1,2,3,4 \}$.}
  \label{fig2}
\end{minipage}
\vspace{-0.3\baselineskip}
\end{figure}

The graphs suggest that convergence effectively occurs in just a few steps.
Figures~\ref{fig3} and~\ref{fig4}, containing the graphs of the differences
of the consecutive payoffs and policies on the logarithmic scale, confirm this. 
In Figures~\ref{fig1} and~\ref{fig2} it seems that fewer graphs are presented 
than is stated in the caption. The reason for this is that the final few graphs
coincide. Moreover, the policies only differ on a subinterval $(-2,2)$, because outside of it 
they coincide as it is optimal to chose one of the boundary points of $A=[-1,1]$.
Finally, there is no numerical indication that the sequence of policies have more 
than one accumulation point as they appear to converge very fast indeed.

\begin{figure}[!htb]
\vspace{-0.3\baselineskip}
\centering
\begin{minipage}{.4\textwidth}
  \captionsetup{width=0.61\textwidth}
  \centering
  \includegraphics[trim = 4.3cm 10cm 4.3cm 9.5cm, clip, width=\linewidth]{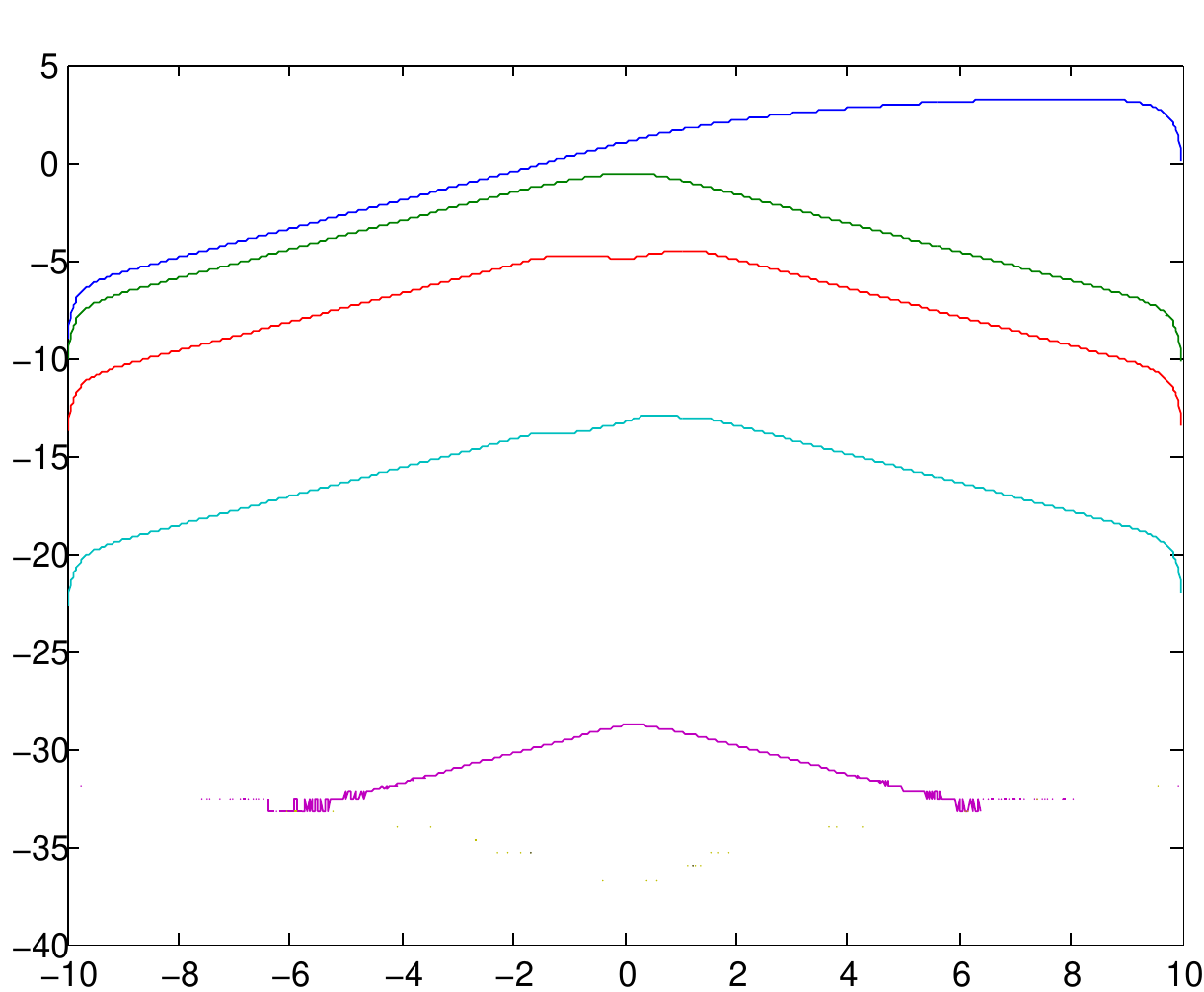}
  \caption{The graphs of $\log(|V_{\pi_{n+1}}-V_{\pi_n}|)$ for \mbox{$n \in \{ 0,1,2,3,4\}$.}}
  \label{fig3}
\end{minipage}
\qquad
\begin{minipage}{.4\textwidth}
  \captionsetup{width=0.61\textwidth}
  \centering
  \includegraphics[trim = 4.3cm 10cm 4.3cm 9.5cm, clip, width=\linewidth]{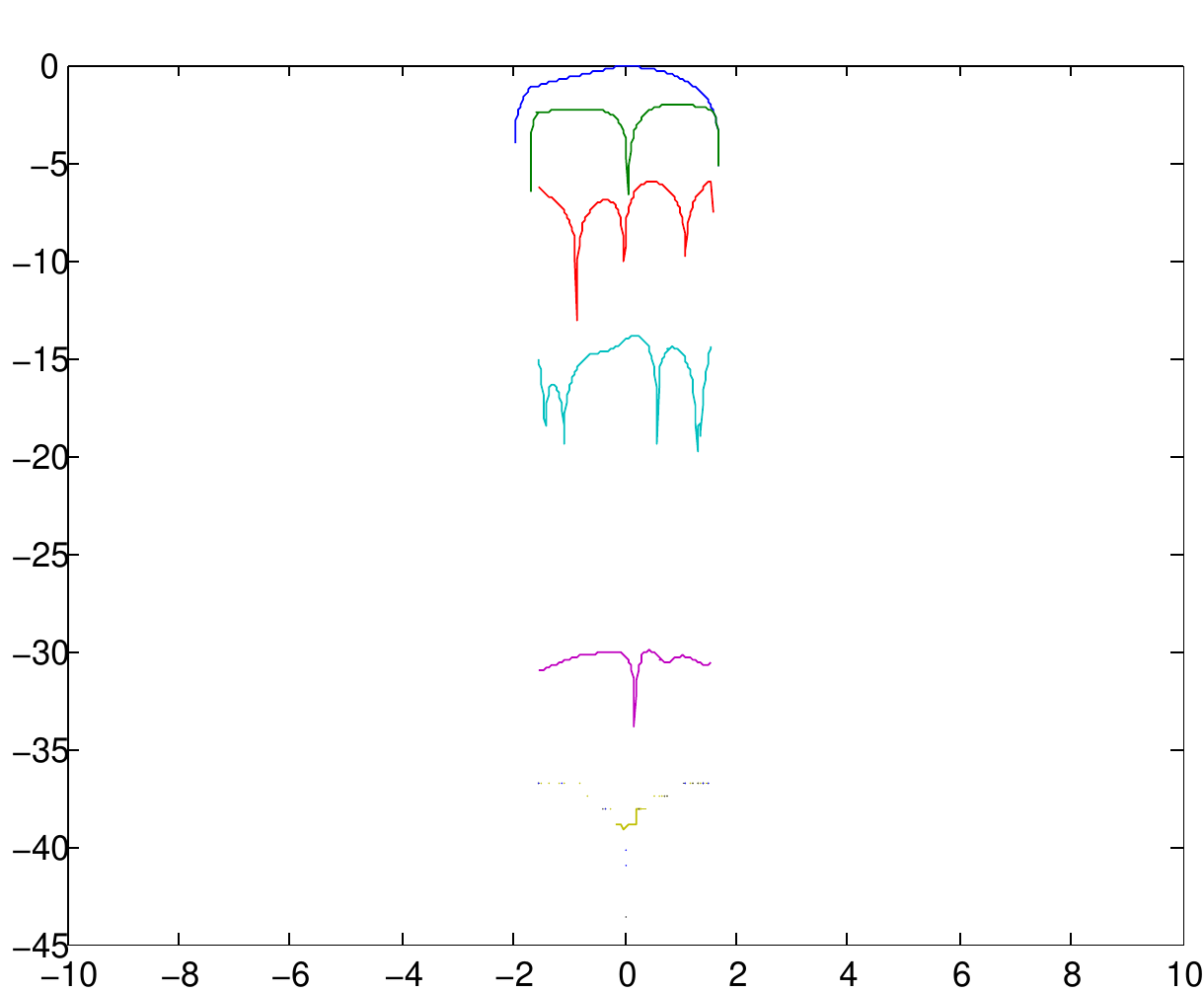}
  \caption{The graphs of $\log(|\pi_{n+1}-\pi_n|)$ for \mbox{$n \in \{0,1,2,3,4\}$.}}
  \label{fig4}
\end{minipage}
\vspace{-0.3\baselineskip}
\end{figure}

\section{Proofs}
\label{sec:Proofs}

\subsection{Auxiliary results - the multidimensional case}

\subsubsection{The reflection coupling of Lindvall and Rogers~\cite{article6} and the continuity of the payoff $V_\pi$}
We now establish the continuity of the payoff function for a locally Lipschitz Markov policy $\pi$ under Assumption~\ref{ass1IHM}.  
The reflection coupling of Lindvall and Rogers~\cite{article6} plays a crucial role in this.
In fact, the continuity of $V_\pi$ hinges on the following property of the coupling in~\cite{article6}: 
copies of $X^{\pi,x}$ and $X^{\pi,x'}$, started very close to each other, will meet with high probability
before moving apart by a certain distance greater than $\|x-x'\|$ (see Lemma~\ref{mdc} below). 

We first show that the coupling from~\cite{article6} can be applied to the diffusion $X^{\pi,\cdot}$.
As explained in Remark~\ref{rem:d_is_m}, we may (and do) assume that the dimension of the noise and the controlled
process are equal, i.e. $d=m$. 
By Assumption~\ref{ass1IHM} above $\s_\pi$ and $\mu_\pi$ are bounded and hence~\cite[As.~(12)(ii)]{article6} holds.
Inequality~\eqref{lambda} in Assumption~\ref{ass1IHM}
implies that 
$\lambda_{\max}(\s_\pi^{-1}\s_\pi^{-T})\leq 1/\lambda$.
Hence, by
Remark~\ref{rem:matrix_norm}, we have 
$\|\s_\pi^{-1}\|\leq 1/\sqrt{\lambda}$
and~\cite[As.~(12)(ii)]{article6} also holds.
The assumptions in~\cite[(12)(i)]{article6} requires 
that 
$\s_\pi$ and $\mu_\pi$ are globally Lipschitz. But this assumption is only used in~\cite{article6} as a 
guarantee that the corresponding SDE has a unique strong solution, which is the case in our
setting under the locally Lipschitz condition in Assumption~\ref{ass1IHM}.
Hence, for any $x,x'\in\R^d$,  the coupling from~\cite{article6} can be applied to construct
the process
$(X^{\pi,x},X^{\pi,x'})$
so that 
$X^{\pi,x}$
follows SDE~\eqref{sde2IHM} and
$X^{\pi,x'}$
satisfies 
$$X^{\pi,x'}_t = x' + \int_0^t \mu_\pi \left( X^{\pi,x'}_s \right) \mathrm{d}s
+ \int_0^t \s_\pi \left( X^{\pi,x'}_s \right) H_s\mathrm{d}B_s,\qquad\text{for $t\in[0,\rho_0(Y))$,}
$$
where
$\rho_0(Y):=\inf\{t\geq 0: \|Y_t\|=0\}$ ($\inf \emptyset := \infty$)
is the coupling time,
$Y:=X^{\pi,x}-X^{\pi,x'}$,
and
\begin{equation}
\label{ref:EQn_H_u_t}
H_t := I - 2u_t u_t^T,\qquad\text{defined via}\quad
u_t := \frac{\s_\pi^{-1} \left( X^{\pi,x'}_t \right) Y_t}{\left\| \s_\pi^{-1} \left( X^{\pi,x'}_t \right) Y_t \right\|}
\qquad\text{for $t\in[0,\rho_0(Y))$,}
\end{equation}
is the reflection in
$\R^d$
about the hyperplane  orthogonal to the unit vector
$u_t$.
Moreover, we have 
$ X_t^{\pi,x'}=X^{\pi,x}_t$ 
for all $t \in[\rho_0(Y),\infty)$. 
Note also that  
$H_t\in O(d)$
is an orthogonal matrix for $t\in[0,\rho_0(Y))$
and the process
$B'=(B'_t)_{t\in\R_+}$,
given by
$B'_t:=\int_0^t (\I_{\left\{ s <\rho_0(Y) \right\}} H_t +\I_{\left\{ s\geq \rho_0(Y) \right\}} I) \mathrm{d}B_s$,
is a Brownian motion by the L\'evy characterisation theorem. 
Hence 
$X^{\pi,x'}$
satisfies the SDE
$\mathrm{d}X^{\pi,x'}_t = \s_\pi(X^{\pi,x'}_t)\mathrm{d} B'_t + \mu_\pi(X^{\pi,x'}_t)\mathrm{d} t$ 
with 
$X^{\pi,x'}_0=x'$
(see~\cite[Sec.~3]{article6} for more details).

\begin{lemma}
\label{mdc}
Fix a locally Lipschitz Markov policy 
$\pi:\R^d\to A$
and 
$x\in\R^d$. 
Then for every 
$\epsilon\in(0,1)$
there exist
$\bar \varphi\in(0,1]$
with the property:
$\forall \varphi\in(0,\bar\varphi)$ 
$\exists \varphi'\in(0,\varphi)$
such that
$\p(\rho_\varphi(Y)<\rho_0(Y))<\epsilon$
if $\|x-x'\|<\varphi'$,
where 
$\rho_c(Y) := \inf \left\{ t \geq 0;\; \|Y_t\| = c \right\}$
($\inf \emptyset = \infty$)
for any $c>0$.
\end{lemma}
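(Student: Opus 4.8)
The plan is to control the Euclidean distance $R_t := \|Y_t\|$ of the two mirror-coupled copies by comparing a suitable time-change of $R$ with a Bessel process, and then to invoke the known exit/hitting behaviour of Bessel processes. First I would apply It\^o's formula to $R_t = \|X^{\pi,x}_t - X^{\pi,x'}_t\|$ on the stochastic interval $[0,\rho_0(Y))$. Writing $Y_t = X^{\pi,x}_t - X^{\pi,x'}_t$, its SDE has drift $\mu_\pi(X^{\pi,x}_t) - \mu_\pi(X^{\pi,x'}_t)$ and diffusion matrix $\s_\pi(X^{\pi,x}_t) - \s_\pi(X^{\pi,x'}_t)H_t$, with $H_t = I - 2u_tu_t^T$ as in~\eqref{ref:EQn_H_u_t}. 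The key algebraic point of the Lindvall--Rogers mirror coupling is that, by the choice of $u_t$, the matrix $\s_\pi(X^{\pi,x'}_t)(I - H_t) = 2\s_\pi(X^{\pi,x'}_t)u_tu_t^T$ acts on $Y_t$ in the direction of $Y_t$ itself, so that the \emph{radial} part of the noise is essentially that of a one-dimensional Brownian motion with a bounded, strictly positive (by uniform ellipticity~\eqref{lambda}) coefficient, while the local-Lipschitz bound on $\s_\pi$ near a fixed compact forces the remaining (tangential, ``extra'') noise and the drift term in $\mathrm{d}R_t$ to be $O(R_t)$. Concretely, on the event that $R$ has not yet left a small ball of radius $\bar\varphi$ around $0$ (all relevant paths live in a fixed compact neighbourhood of $x$, so the local Lipschitz constants $C_K$ apply), I would obtain
\begin{equation*}
\mathrm{d}R_t = b_t\,\mathrm{d}t + \sigma_t\,\mathrm{d}\beta_t,\qquad \lambda^{1/2}\le \sigma_t, \quad |b_t|\le C(1 + R_t^{-1}R_t) = C',
\end{equation*}
for a one-dimensional Brownian motion $\beta$ and constants $C,C'$ depending only on the compact, where I have used that the It\^o correction terms coming from the off-diagonal/tangential noise contribute a drift bounded by $C\,R_t^{-1}\cdot R_t^2 \cdot R_t^{-1} = C$ on the relevant set. (This is the step where one must be careful: the naive estimate of the $\mathrm{d}\langle R\rangle_t$ correction has a $R_t^{-1}$ singularity, but the tangential noise amplitude is itself $O(R_t)$ by local Lipschitzness of $\s_\pi$, so the singularity cancels up to a bounded term.)

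Next I would time-change: let $\tau(t)$ be the inverse of $t\mapsto \int_0^t \sigma_s^2\,\mathrm{d}s$ and set $\widetilde R_s := R_{\tau(s)}$. Then $\widetilde R$ solves $\mathrm{d}\widetilde R_s = \widetilde b_s\,\mathrm{d}s + \mathrm{d}W_s$ with $|\widetilde b_s|\le C'/\lambda =: C''$ and $W$ a standard one-dimensional Brownian motion, on $[0,\rho_0(\widetilde R))$ and up to the exit time of the ball of radius $\bar\varphi$. A one-dimensional Brownian motion with a bounded drift started at a point $r_0>0$ can be compared pathwise (Ikeda--Watanabe / Skorokhod-type comparison of scalar SDEs, or a Girsanov change of measure) with a scaled Bessel-type process, or more directly one may just use the explicit scale function: for $\widetilde R$ with $|\widetilde b|\le C''$ on $(0,\bar\varphi)$, the probability of hitting $\varphi$ before hitting $0$, starting from $r_0\in(0,\varphi)$, is bounded above by the corresponding quantity $p(r_0,\varphi)$ for the ``worst case'' drift $+C''$ (or $-C''$, whichever is extremal for this event), and $p(r_0,\varphi)\to 0$ as $r_0/\varphi\to 0$, uniformly for $\varphi\in(0,\bar\varphi)$ because on a bounded interval the scale function of Brownian-motion-plus-bounded-drift is comparable to the identity. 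Thus, given $\epsilon$, one fixes $\bar\varphi$ small enough that the scale-function distortion on $(0,\bar\varphi)$ is controlled, and then for each $\varphi\in(0,\bar\varphi)$ chooses $\varphi'$ small enough that $p(\varphi',\varphi)<\epsilon$; translating back through the (monotone) time change, which does not affect which of $\{0\}$ or $\{\varphi\}$ is hit first, gives $\p(\rho_\varphi(Y)<\rho_0(Y))<\epsilon$ whenever $\|x-x'\| = R_0 < \varphi'$.

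The main obstacle, as flagged above, is the rigorous handling of the It\^o formula for $R_t=\|Y_t\|$ near $R_t=0$: one cannot differentiate $\|\cdot\|$ at the origin, so strictly the computation should be carried out on $[0,\rho_0(Y))$ (where $R>0$), with the drift-cancellation argument showing that the SDE for $R$ has bounded coefficients that extend continuously, after which the comparison is run up to $\rho_0(Y)\wedge\rho_\varphi(Y)$; since we only ever ask about the event $\{\rho_\varphi<\rho_0\}$, we never need to make sense of $R$ at or past the coupling time. A secondary technical point is confining all paths to a fixed compact set so that the local Lipschitz constants are genuinely uniform: this is automatic on $[0,\rho_\varphi(Y)\wedge\rho_0(Y)]$ once $\bar\varphi\le 1$, since $\|X^{\pi,x'}_t - x\|\le \|Y_t\| + \|X^{\pi,x}_t - x\|$ and one may further localise at the (a.s.\ finite) exit time of $X^{\pi,x}$ from a large ball, sending that radius to infinity at the end. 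Everything else is routine: the scale-function estimate for scalar diffusions with bounded drift on a bounded interval, and the fact that a monotone, continuous, strictly increasing time change preserves the order in which two levels are reached.
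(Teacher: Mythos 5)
Your proposal follows the same spine as the paper's proof: exploit the Lindvall--Rogers reflection to show that the radial noise of $\|Y\|$ has a volatility bounded below by uniform ellipticity, observe that by local Lipschitzness the remaining terms cancel the $O(1/\|Y\|)$ It\^o correction, perform a DDS time-change to unit-volatility, compare pathwise with a benchmark one-dimensional process, and conclude with a scale-function estimate for the exit probability. The one genuine (though minor) difference is the choice of coordinate and benchmark: the paper works with the squared distance $\bar S=\|Y\|^2$, time-changes, and couples it below a squared Bessel process of dimension $1+\epsilon$ (so the implied drift of $\sqrt{\hat S}$ is the unbounded $\nu_s/(2\sqrt{\hat S_s})$, controlled only through $\nu_s<\epsilon$), whereas you work with $R=\|Y\|$ directly and observe the sharper fact that the drift of $R$ is actually $O(R)$, hence bounded near $0$, so the benchmark is just Brownian motion with a bounded drift and the pathwise comparison reduces to the trivial statement that a difference of two It\^o processes with identical martingale parts is monotone when its drift has a sign. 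Your version is marginally cleaner at this step, but the ingredients and the order in which they are used are the same as the paper's.

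One point worth flagging: you claim that confinement to a fixed compact (needed to make the local Lipschitz constant $M_x$ uniform) is ``automatic on $[0,\rho_\varphi\wedge\rho_0]$''. It is not: $\|Y_t\|<\bar\varphi$ keeps the two copies \emph{close to each other}, but does not confine $X^{\pi,x}_t$ itself near $x$, and the paper glosses over the same issue when it invokes ``$M_x$, a Lipschitz constant ... in the ball around $x$ of radius one''. Your suggested remedy (stop at the exit of a large ball and send the radius to infinity) does need care, because the constants $\bar\varphi,\varphi'$ produced by the argument depend on the chosen radius through the local Lipschitz constant; a clean fix would instead first bound, via the time-change, the expected duration of $[0,\rho_\varphi\wedge\rho_0]$ by $O(\varphi'\varphi/\lambda)$ and then show the process cannot leave a fixed large ball in that small a time with non-negligible probability. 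So your flag is well placed, even if the one-line dismissal that follows it is not quite right; this is a shared (and repairable) soft spot of both arguments, not a reason to reject the approach.
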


\begin{remark}
Note that the main assumption in~\cite[Thm.~1]{article6} is not satisfied in Lemma~\ref{mdc},
as we have no assumption on the global variability of $\s_\pi$.
Hence the coupling 
$(X^{\pi,x},X^{\pi,x'})$
is not necessarily successful  even if the starting points
$x$ and $x'$ are very close to each other,
i.e. possibly 
$\p(\rho_0(Y)<\infty)<1$
even if $\|Y_0\|=\|x-x'\|$ is very close to zero. 
However, by Lemma~\ref{mdc}, the coupling will occur with probability at least $1-\epsilon$
before the diffusions are more than $\bar \varphi=\bar \varphi(\epsilon)$ away from each other, 
implying the continuity of  $V_\pi$ (cf. Lemma~\ref{conIHM} and Remark~\ref{rem:No_coupling}
below).
\end{remark}

\begin{proof}
Let
$\bar{S} := \|Y\|^2$,
$\delta:=\s_\pi ( X^{\pi,x})-\s_\pi (X^{\pi,x'})$
and
$\beta := \mu_\pi ( X^{\pi,x} ) - \mu_\pi ( X^{\pi,x'})$.
Define
\begin{equation}
\label{eq:notation_alpha_beta}
\alpha_t := \s_\pi ( X^{\pi,x}_t)-\s_\pi (X^{\pi,x'}_t)H_t\qquad\text{and}\qquad
v_t := Y_t/\|Y_t\|\qquad\text{for $t\in[0,\rho_0(Y))$.}
\end{equation}
In this proof $x\in\R^d$ is fixed and $x'\in\R^d$ is arbitrary in the ball of radius one centred at $x$.
Recall that
$\gr h(z)=2z$ and $\he h(z) = 2 I$
for $h(z):=\|z\|^2$, $z\in\R^d$,
and apply
It\^{o}'s lemma to $\bar S$: 
\begin{align}
\label{Sbar}
\bar{S}_t & = \|x-x'\|^2 + \int_0^t 2\sqrt{\bar{S}_s} v_s^T \alpha_s  \mathrm{d}B_s 
 + \int_0^t \left( 2\sqrt{\bar{S}_s} v_s^T \beta_s + \tr \left(\alpha_s\alpha_s^T\right) \right) \mathrm{d}s,
\quad\text{$t\in[0,\rho_0(Y))$.}
\end{align}

Our task is to study the behaviour of $\bar S$ when started very close to zero. 
To do this, 
we first establish the facts in~\eqref{eqtrace} and~\eqref{eq:S_def} below, which in turn allow 
us to apply  time-change and  coupling techniques to prove the lemma. 
We start by proving the following:
\begin{equation}
\label{eqtrace}
0\leq \tr \left( \alpha_t \alpha_t^T \right) -  \left\| v_t^T \alpha_t \right\|^2
= \tr \left( \delta_t \delta_t^T \right) - \left\| v_t^T \delta_t \right\|^2 \leq M_x^2 \|Y_t\|^2 \quad\text{for  $t \in [0,\rho_0(Y)\wedge \rho_1(Y))$,}
\end{equation}
where $M_x>1$ is a Lipschitz constant for $\s_\pi$ and $\mu_\pi$ in the ball around $x$ of radius one. 
The first inequality in~\eqref{eqtrace} follows since the trace is the sum of the eigenvalues of $\alpha_t \alpha_t^T$,
which are all non-negative, while 
$\left\| v_t^T \alpha_t \right\|^2$
is at most the largest eigenvalue. 
The second inequality follows since $\s_\pi$ is  Lipschitz on any ball around $x$ and $\|Y_t\|<1$ for $t< \rho_1(Y)$.
To establish the equality in~\eqref{eqtrace}
note that, as
$\| v_t^T A\|^2=\tr(AA^Tv_tv_t^T)=\tr(v_tv_t^TAA^T)$ for any $A\in \R^{d\times d}$,
we have
$$\tr ( \alpha_t \alpha_t^T ) -  \| v_t^T \alpha_t \|^2 - (\tr ( \delta_t \delta_t^T ) -  \| v_t^T \delta_t \|^2)= 
\tr((I-v_tv_t^T)(\alpha_t \alpha_t^T-\delta_t \delta_t^T)).$$
Recall that $H_t^{-1}=H_t^T=H_t$. We therefore find 
\begin{eqnarray}
\nonumber
\alpha_t \alpha_t^T-\delta_t \delta_t^T & =  &
 \s_\pi(X^{\pi,x'}_t)(I - H_t) \s_\pi( X^{\pi,x}_t)^T + \s_\pi( X^{\pi,x}_t) (I - H_t) \s_\pi (X^{\pi,x'}_t)^T \\
   & = &  2  \left(v_t u_t^T\s_\pi( X^{\pi,x}_t)^T+ \s_\pi( X^{\pi,x}_t)u_t v_t^T\right)\|Y_t\|/ \| \s_\pi (X^{\pi,x'}_t)^{-1} Y_t  \|,
\label{eq:diff_conj}
\end{eqnarray}
where the second equality follows by definition~\eqref{ref:EQn_H_u_t} and identity 
$v_t = Y_t/\|Y_t\|$.
Hence~\eqref{eqtrace} follows. 

Since
$\tr(v_t u_t^T\s_\pi( X^{\pi,x}_t)^T)=\langle\s_\pi( X^{\pi,x}_t)\s_\pi(X^{\pi,x'}_t)^{-1}v_t,v_t\rangle\|Y_t\|/ \| \s_\pi (X^{\pi,x'}_t)^{-1} Y_t  \|$
holds for times
$t \in[0,\rho_0(Y))$,
equalities~\eqref{eqtrace} and~\eqref{eq:diff_conj} yield:
\begin{align*}
\left\| v_t^T \alpha_t \right\|^2 \geq  \left\| v_t^T \alpha_t \right\|^2 - \left\| v_t^T \delta_t \right\|^2 
= 4 \langle\s_\pi( X^{\pi,x}_t)\s_\pi(X^{\pi,x'}_t)^{-1}v_t,v_t\rangle\|Y_t\|^2/ \| \s_\pi (X^{\pi,x'}_t)^{-1} Y_t  \|^2.
\end{align*}
Inequality~\eqref{lambda} in Assumption~\ref{ass1IHM}
implies 
$\|\s_\pi^{-1}\|\leq 1/\sqrt{\lambda}$
(cf. the second paragraph of this section).
Hence 
$\|Y_t\|/ \| \s_\pi (X^{\pi,x'}_t)^{-1} Y_t  \|\geq \sqrt{\lambda}$.
By the definition of $\delta_t$ above we get
\begin{align*}
\left\| v_t^T \alpha_t \right\|^2 
\geq 4 \lambda (1+\langle\delta_t\s_\pi(X^{\pi,x'}_t)^{-1}v_t,v_t\rangle)
\geq 4 \lambda (1-\|\delta_t\| \|\s_\pi(X^{\pi,x'}_t)^{-1}\|)
\geq 4 \lambda (1- \|\delta_t\|/\sqrt{\lambda}).
\end{align*}
For any $\epsilon\in(0,1)$,
define
$$\bar\varphi:=\min\{1,\epsilon\sqrt{\lambda}/M_x, \epsilon(1 - \epsilon)\lambda /M_x^2\},$$ 
where $M_x$ is as in~\eqref{eqtrace} above. 
Then, if $\|x-x'\|<\bar\varphi$ and 
$t\in[0,\rho_{\bar\varphi}(Y))$,
we have
$\|Y_t\|<\bar\varphi$ and hence
$\|\delta_t\|\leq M_x \|Y_t\|\leq \epsilon \sqrt{\lambda}$.
In particular, 
we get
\begin{equation}
\label{eq:S_def}
\| v_t^T \alpha_t\|^2
\geq  4 \lambda (1 - \epsilon)>0 \quad \text{ for any $t\in[0,T)$, where $T:=\rho_0(Y)\wedge \rho_{\bar\varphi}(Y)$.}
\end{equation}

Let 
$M>0$
denote a global upper bound on $\s_\pi$ and 
$\mu_\pi$, 
which exists by Assumption~\ref{ass1IHM}.
Since the inequalities 
$\|v_t^T \alpha_t\|\leq\|\alpha_t\|\leq \|\s_\pi(X^{\pi,x}_t)\| + \|\s_\pi(X^{\pi,x'}_t)\|\leq  2M$ hold for all $t \in[0,\rho_0(Y))$,
the increasing process $[N]=([N]_t)_{t\in\R_+}$,
given by
$[N]_t:=\int_0^t\I_{\left\{s<\rho_0(Y)\right\}} \|v_s^T \alpha_s\|^2  \mathrm{d}s$,
is well-define and  
$[N]_t<\infty$ for every $t\in\R_+$. 
Hence 
$N=(N_t)_{t\in\R_+}$,
given by
$N_t := \int_0^t \I_{\{s<\rho_0(Y)\}}v_s^T \alpha_s  \mathrm{d}B_s$,
is a well-defined local martingale with a quadratic variation process given by 
$[N]$.
Let $\tau=(\tau_s)_{s\in\R_+}$ and $W=(W_s)_{s\in\R^+}$ be 
the Dambis Dubins-Schwartz (DDS) time-change and Brownian motion, respectively, 
for the local martingale $N$ (see~\cite[Thm~3.4.6, p.~174]{Karatzas}). 
More precisely,  let
$s\mapsto \tau_s := \inf \{ t \in\R_+: [N]_t > s \}$
(with $\inf \emptyset =\infty$)
be the inverse of $t\mapsto[N]_t$.
Then
$W$
satisfies
$W_{[N]_t} = N_t$ 
for all 
$t\in\R_+$.
Moreover it holds that  $\tau_s<\infty$ for $s<[N]_\infty:=\lim_{t\uparrow\infty} [N]_t$.
If
$[N]_\infty<\infty$
with positive probability, 
we have to extend the probability space to support $W$ 
(see e.g.~\cite[Prob.~3.4.7, p.~175]{Karatzas}).
This extension however has no bearing on the coupling  
$(X^{\pi,x},X^{\pi,x'})$.

Let 
$\hat \alpha_s:=\alpha_{\tau_{s}}$,
$\hat \delta_s:=\delta_{\tau_{s}}$,
$\hat \beta_s:=\beta_{\tau_{s}}$,
$\hat v_s:=v_{\tau_s}$ 
and
$\hat S_s:=\bar S_{\tau_{s}}$
for $s\in[0,[N]_{\rho_0(Y)})$, cf.~\eqref{eq:notation_alpha_beta} above.
Assume $\|x-x'\|<\bar\varphi$ and
time-change the integrals in~\eqref{Sbar} (see~\cite[Prop.~3.4.8, p.~176]{Karatzas})
to get
\begin{align*}
\hat{S}_u  = \|x-x'\|^2 + \int_0^u 2\sqrt{\hat{S}_s} \,\mathrm{d}W_s
+ \int_0^u \left( 1 + \nu_s\right) \mathrm{d}s,\qquad 
\text{for any $u\in[0,[N]_T)$,}
\end{align*}
where
$\nu_s:= \left(2\sqrt{\hat{S}_s} \hat{v}_s^T \hat{\beta}_s + \tr ( \hat{\delta}_s \hat{\delta}_s^T)
- \| \hat{v}_s^T \hat{\delta}_s \|^2\right)/
\| \hat{v}_s^T \hat{\alpha}_s \|^2$
and
$T$ is defined in~\eqref{eq:S_def}.
By~\eqref{eq:S_def}
it holds that 
$\| \hat{v}_s^T \hat{\alpha}_s \|^2\geq 4 \lambda (1 - \epsilon)$
for all $s\in[0,[N]_T)$.
Then~\eqref{eqtrace} and the definitions of $\hat \beta, \hat \delta$ and $\nu_s$ imply the inequalities 
$0\leq \nu_s< M_x^2\|Y_{\tau_s}\|^2/(\lambda(1-\epsilon))$ for all $s\in[0,[N]_T)$.
Any
$\varphi\in(0,\bar\varphi)$ satisfies 
$\varphi< \epsilon(1-\epsilon)\lambda/M_x^2$
and
$R:=\rho_0(Y)\wedge\rho_\varphi(Y)\leq T$.
Hence the Lipschitz property of $\s_\pi$ and $\nu_\pi$ on the ball of radius $\varphi$ around $x$
implies
\begin{equation}
\label{eq:nu_bound}
\nu_s<\epsilon\qquad
\text{for all $s\in[0,[N]_R)$.}
\end{equation}

The SDE
$S_s = \|x-x'\| + \int_0^s 2\sqrt{S_r}\mathrm{d}W_r + (1 + \epsilon)s$, $s\in\R_+$,
for the squared Bessel process of dimension $1+\epsilon$
has a pathwise unique (and hence strong) solution  
$S=(S_s)_{s\in\R_+}$, see~\cite[App.~A.3, p.~108]{Cerny_Engelbert}. 
Note that $S$ is driven by the DDS Brownian motion $W$ introduced above. 
Hence the coupling $(S,\hat S)$
on the stochastic interval $[0,[N]_R)$
allows us to compare the two processes pathwise.
Assume
$\|x-x'\|<\varphi$. Then  the following equality holds: 
$$\sqrt{S_s} - \sqrt{\hat{S}_s} =\frac{1}{2} \int_0^s \left( \epsilon/\sqrt{S_r}- \nu_r/\sqrt{\hat{S}_r}\right) \mathrm{d}r \qquad \text{for any $s\in[0,[N]_R)$.}$$
Almost surely, the path of the process 
$(\sqrt{S_s} - \sqrt{\hat{S}_s})_{s\in[0,[N]_R)}$
is continuously differentiable and,  by~\eqref{eq:nu_bound}, its derivative is strictly positive at every zero of the path. 
Since the derivative is continuous, it must be strictly positive on a neighbourhood of each zero. This implies that the only 
zero is at $s=0$ (i.e. $S_0=\hat S_0$), 
and it holds that 
\begin{equation}
\label{eq:S_dominates_S_hat}
S_s\geq \hat S_s\qquad\text{for all $s\in[0,[N]_R)$.}
\end{equation}

We now conclude the proof of the lemma. 
Assume as before that 
$\|x-x'\|<\varphi$
and define
$\Upsilon_\varphi(\hat S):=\inf\{s\in[0,[N]_T): \hat S_s=\varphi\}$ (with $\inf \emptyset=\infty$).
Note that the events
$\{\Upsilon_\varphi(\hat S)<\infty\}$
and
$\{[N]_{\rho_\varphi(Y)}<[N]_{\rho_0(Y)}\}$
coincide,
since on either event we have $\Upsilon_\varphi(\hat S)=[N]_R=[N]_{\rho_\varphi(Y)}$.
Hence, 
\begin{equation}
\label{eq:important_inclusion}
\{\rho_\varphi(Y)<\rho_0(Y)\}
= 
\{\Upsilon_\varphi(\hat S)<\infty\} \subseteq \{\text{$S$ exits interval $(0,\varphi)$ at $\varphi$}\},
\end{equation}
where the inclusion follows by~\eqref{eq:S_dominates_S_hat}.
Recall that $s(z)=z^{(1-\epsilon)/2}$, $z\in\R_+$, is a scale function of the diffusion $S$. 
Hence 
$\p(\text{$S$ exits interval $(0,\varphi)$ at $\varphi$})=s(\|x-x'\|)/s(\varphi)$.
Define $\varphi':=\epsilon\varphi$ and note that by~\eqref{eq:important_inclusion} we have:
$\p(\rho_\varphi(Y)<\rho_0(Y))<\epsilon$
for any $x'\in\R^d$ satisfying $\|x-x'\|<\varphi'$.
\end{proof}

\begin{lemma}
\label{conIHM}
Pick a locally Lipschitz Markov policy
$\pi:\R^d\to A$
and let Assumption~\ref{ass1IHM} hold.
Then the corresponding payoff function in~\eqref{eq:Markov_payoff},
$V_\pi:\R^d\to\R_+$,
is continuous.
\end{lemma}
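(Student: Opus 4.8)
The plan is to derive Lemma~\ref{conIHM} from the reflection coupling $(X^{\pi,x},X^{\pi,x'})$ constructed earlier in this subsection together with Lemma~\ref{mdc}. Fix $x\in\R^d$; I will prove that $V_\pi$ is continuous at $x$. Write $\rho_0:=\rho_0(Y)$ for the coupling time, with $Y:=X^{\pi,x}-X^{\pi,x'}$, and $\Gamma^z_t:=\int_0^t\alpha_\pi(X^{\pi,z}_s)\,\mathrm{d}s$, so that $V_\pi(z)=\E\int_0^\infty\mathrm{e}^{-\Gamma^z_t}f_\pi(X^{\pi,z}_t)\,\mathrm{d}t$ and, by Assumption~\ref{ass1IHM} ($\alpha_\pi>\epsilon_0$ and $F:=\sup_{\R^d\times A}|f|<\infty$), $|V_\pi|\le F/\epsilon_0$; in particular $V_\pi$ is bounded. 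For $x'$ in the unit ball around $x$, run the coupling: the two marginals have the laws of the controlled process started at $x$ and at $x'$, and the copies coincide on $[\rho_0,\infty)$. Since the integrands are dominated by $F\mathrm{e}^{-\epsilon_0 t}$, Fubini gives $V_\pi(x)-V_\pi(x')=\E\int_0^\infty\Phi_t\,\mathrm{d}t$ with $\Phi_t:=\mathrm{e}^{-\Gamma^x_t}f_\pi(X^{\pi,x}_t)-\mathrm{e}^{-\Gamma^{x'}_t}f_\pi(X^{\pi,x'}_t)$ and $|\Phi_t|\le 2F\mathrm{e}^{-\epsilon_0 t}$. On $\{\rho_0<\infty\}$ the copies agree after $\rho_0$, so for $t\ge\rho_0$ one has $f_\pi(X^{\pi,x}_t)=f_\pi(X^{\pi,x'}_t)$ and $\Gamma^x_t-\Gamma^x_{\rho_0}=\Gamma^{x'}_t-\Gamma^{x'}_{\rho_0}$, whence $\int_{\rho_0}^\infty\Phi_t\,\mathrm{d}t=\big(\mathrm{e}^{-\Gamma^x_{\rho_0}}-\mathrm{e}^{-\Gamma^{x'}_{\rho_0}}\big)\int_{\rho_0}^\infty f_\pi(X^{\pi,x}_t)\,\mathrm{e}^{-(\Gamma^x_t-\Gamma^x_{\rho_0})}\,\mathrm{d}t$, whose last factor has absolute value at most $F/\epsilon_0$.

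Fix $\eta>0$; I would choose constants in this order. First, as $X^{\pi,x}$ is non-exploding (Assumption~\ref{ass1IHM}), pick $R$ so large that $\tfrac{2F}{\epsilon_0}\E[\mathrm{e}^{-\epsilon_0\theta_R}]<\eta/3$, where $\theta_R:=\inf\{t\ge0:\|X^{\pi,x}_t\|\ge R\}$; this is possible because $\theta_R\uparrow\infty$ a.s.\ as $R\to\infty$, and the quantity depends only on the (fixed) law of $X^{\pi,x}$, not on $x'$. Let $\tilde C$ be a common Lipschitz constant of $f_\pi$ and $\alpha_\pi$ on $\{\|z\|\le R+1\}$, finite since $\pi$ is locally Lipschitz and $\s,\mu,\alpha,f$ are Lipschitz on compacts. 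Next pick $\epsilon\in(0,1)$ with $\tfrac{2F\epsilon}{\epsilon_0}<\eta/3$, let $\bar\varphi\in(0,1]$ be as in Lemma~\ref{mdc}, and shrink $\bar\varphi$ if necessary (Lemma~\ref{mdc} remains valid for any smaller positive value) so that moreover $\tilde C\bar\varphi\big(\epsilon_0^{-1}+F\epsilon_0^{-2}+\tfrac{F}{\mathrm{e}\,\epsilon_0^2}\big)<\eta/3$. Lemma~\ref{mdc} then yields $\varphi'>0$ such that, whenever $\|x-x'\|<\varphi'$, the event $G$ on which $\|Y_t\|<\bar\varphi$ for all $t<\rho_0$ satisfies $\p(G^c)<\epsilon$ (concretely, $G:=\{\rho_\varphi(Y)\ge\rho_0\}$ for a fixed $\varphi\in(0,\bar\varphi)$, with $\varphi'$ the corresponding threshold).

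Assume $\|x-x'\|<\varphi'$ and decompose
\[
\E\int_0^\infty\Phi_t\,\mathrm{d}t=\E\Big[\I_{G^c}\!\int_0^\infty\!\Phi_t\,\mathrm{d}t\Big]+\E\Big[\I_{G}\!\int_0^{\theta_R\wedge\rho_0}\!\!\Phi_t\,\mathrm{d}t\Big]+\E\Big[\I_{G}\!\int_{\theta_R\wedge\rho_0}^{\infty}\!\!\Phi_t\,\mathrm{d}t\Big].
\]
The first term is at most $\tfrac{2F}{\epsilon_0}\p(G^c)<\eta/3$. For the second, on $G$ and for $t<\theta_R\wedge\rho_0$ both $X^{\pi,x}_t$ and $X^{\pi,x'}_t$ lie in $\{\|z\|\le R+1\}$ (as $\|Y_t\|<\bar\varphi\le1$), so $|f_\pi(X^{\pi,x}_t)-f_\pi(X^{\pi,x'}_t)|\le\tilde C\bar\varphi$ and $|\Gamma^x_t-\Gamma^{x'}_t|\le\tilde C\bar\varphi\,t$; combining with $|\mathrm{e}^{-a}-\mathrm{e}^{-b}|\le\mathrm{e}^{-a\wedge b}|a-b|$ and $\Gamma^x_t,\Gamma^{x'}_t\ge\epsilon_0 t$ gives $|\Phi_t|\le\mathrm{e}^{-\epsilon_0 t}\tilde C\bar\varphi(1+Ft)$, so the second term is $\le\tilde C\bar\varphi(\epsilon_0^{-1}+F\epsilon_0^{-2})$. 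For the third, split $G$ into $\{\theta_R<\rho_0\}$, where the integral runs over $[\theta_R,\infty)$ and is pathwise $\le\tfrac{2F}{\epsilon_0}\mathrm{e}^{-\epsilon_0\theta_R}$, contributing $\le\tfrac{2F}{\epsilon_0}\E[\mathrm{e}^{-\epsilon_0\theta_R}]<\eta/3$ in expectation, and $\{\rho_0\le\theta_R\}$, where either $\rho_0=\infty$ and the integral vanishes, or $\rho_0<\infty$, in which case the identity of the first paragraph, the bound $|\Gamma^x_{\rho_0}-\Gamma^{x'}_{\rho_0}|\le\tilde C\bar\varphi\,\rho_0$ (since $X^{\pi,x}_s,X^{\pi,x'}_s\in\{\|z\|\le R+1\}$ for $s<\rho_0\le\theta_R$), $|\mathrm{e}^{-a}-\mathrm{e}^{-b}|\le\mathrm{e}^{-a\wedge b}|a-b|$ and $r\mathrm{e}^{-\epsilon_0 r}\le1/(\mathrm{e}\,\epsilon_0)$ bound the integral pathwise by $\tfrac{F\tilde C}{\mathrm{e}\,\epsilon_0^2}\bar\varphi$. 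Adding the three contributions gives $|V_\pi(x)-V_\pi(x')|<\eta$, and since $\eta$ and $x$ are arbitrary, $V_\pi$ is continuous.

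I expect the only genuine difficulty to be bookkeeping. The coupling need \emph{not} succeed ($\rho_0$ may equal $+\infty$, as stressed after Lemma~\ref{mdc}), so there is no pathwise identity valid for every $\omega$, and the ``bad'' sets $G^c$ and $\{\theta_R<\rho_0\}$ must be controlled by the crude exponential bounds above; moreover, since $f$ and $\alpha$ are only locally (not globally) Lipschitz, the ball-exit time $\theta_R$ has to be introduced, and the order of choices — $R$ first, then $\epsilon$, then $\bar\varphi$, then $\|x-x'\|$ — must be respected so that $\tilde C=\tilde C_R$ is already frozen when $\bar\varphi$ is made small.
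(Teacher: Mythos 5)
Your proof is correct and follows the same overall strategy as the paper's: bound $|V_\pi(x)-V_\pi(x')|$ via the reflection coupling and Lemma~\ref{mdc}, splitting according to whether the coupling succeeds before the two copies drift $\varphi$ apart. There is, however, one substantive difference in the decomposition, and it works in your favour. The paper bounds $|f_\pi(X^{\pi,x}_t)-f_\pi(X^{\pi,x'}_t)|$ and $|\alpha_\pi(X^{\pi,x}_t)-\alpha_\pi(X^{\pi,x'}_t)|$ by $M\varphi$ for every $t<\rho_0$ on the good event, where $M$ is a Lipschitz constant obtained on the ball of radius one around $x$, using only the fact that $\|X^{\pi,x}_t-X^{\pi,x'}_t\|<\varphi$; but the two coupled copies can wander out of any fixed ball around $x$ while remaining $\varphi$-close to each other, and under Assumption~\ref{ass1IHM} the Lipschitz constant of $f_\pi$ and $\alpha_\pi$ is only local, so the paper's estimate as written relies on a control of the pair's \emph{location} that the good event alone does not supply. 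Your additional exit time $\theta_R$ --- chosen \emph{before} $\epsilon$, $\bar\varphi$, $\varphi'$ so that $\tilde C=\tilde C_R$ is frozen first --- is exactly the localization that makes the Lipschitz bound legitimate (both copies lie in $\{\|z\|\le R+1\}$ on $[0,\theta_R\wedge\rho_0)$), with the remaining tail $[\theta_R,\infty)$ absorbed by the crude $2F\mathrm{e}^{-\epsilon_0 t}$ bound and the choice of $R$ via $\E[\mathrm{e}^{-\epsilon_0\theta_R}]\to 0$. Everything else --- the bound $|V_\pi|\le F/\epsilon_0$, the pathwise identity on $\{\rho_0<\infty\}$, the coupling-failure term controlled by Lemma~\ref{mdc}, and the elementary inequalities $|\mathrm{e}^{-a}-\mathrm{e}^{-b}|\le\mathrm{e}^{-a\wedge b}|a-b|$ and $r\mathrm{e}^{-\epsilon_0 r}\le(\mathrm{e}\,\epsilon_0)^{-1}$ --- mirrors the paper's computation.
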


\begin{proof}
Fix $x\in\R^d$ and pick
arbitrary
$\varepsilon \in(0,1)$.
By Assumption~\ref{ass1IHM}
there exists 
$\epsilon_0>0$, such that 
$\alpha_\pi\geq \epsilon_0$,
and a constant
$M>1$
that simultaneously bounds
$\alpha_\pi,|f_\pi|<M$
and
is a Lipschitz constant on the ball of radius one around $x$
for
$\alpha_\pi$
and
$f_\pi$.
Apply Lemma~\ref{mdc} to 
$x,\epsilon:=\varepsilon\epsilon_0/(6M)$ and $\pi$ to obtain
$\bar\varphi\in(0,1]$
such that 
$\forall\varphi\in(0,\bar\varphi)$ $\exists \varphi'\in(0,\varphi)$ such that 
$\p(\rho_\varphi<\rho_0)<\epsilon$
for every $x'\in\R^d$ satisfying $\|x-x'\|<\varphi'$
(here $\rho_\varphi,\rho_0$ stand for 
$\rho_\varphi(Y),\rho_0(Y)$, resp.).
Specifically, define 
\begin{equation}
\label{eq:def_of_phi}
\varphi:=\min\{\bar\varphi/2, \varepsilon /(3(1+M/\epsilon_0)M/\epsilon_0),(\varepsilon\mathrm{e}\epsilon_0)/(3M^2)\}
\end{equation}
and fix
$\varphi'\in(0,\varphi)$ such that the conclusion of Lemma~\ref{mdc} holds.
Throughout this proof we use the notation and notions from 
Lemma~\ref{mdc}. 
In particular, 
$(X^{\pi,x},X^{\pi,x'})$
denotes the coupling of two controlled processes started at $(x,x')$
and we 
assume that
$\|x-x'\|<\varphi'$.

Recall that 
$V_\pi(x')=\E F_\infty(X^{\pi,x'})$ for any
$x'\in\R^d$,
where $F_\infty(X^{\pi,x'})$ is given in~\eqref{eq:F_gains}.
By decomposing the probability space into complementary events 
$\{\rho_\varphi>\rho_0\}$
and
$\{\rho_\varphi<\rho_0\}$,
we obtain the following inequality 
$
|V_\pi(x) - V_\pi(x')|\leq A + A'+ A''$,  
where
\begin{align*}
& A: = \E \left( \I_{\{\rho_\varphi>\rho_0\}} \int_0^{\rho_0} \left|
\mathrm{e}^{-\int_0^t \alpha_\pi \left( X^{\pi,x}_s \right) \mathrm{d}s} f_\pi \left( X^{\pi,x}_t \right)
- \mathrm{e}^{-\int_0^t \alpha_\pi \left( X^{\pi,x'}_s \right) \mathrm{d}s} f_\pi \left( X^{\pi, x'}_t \right)
\right| \mathrm{d}t \right), \\
& A':=\E \left( \I_{\{\rho_\varphi>\rho_0\}} 
\int_{\rho_0}^\infty \left|
\mathrm{e}^{-\int_0^t \alpha_\pi \left( X^{\pi,x}_s \right) \mathrm{d}s} f_\pi \left( X^{\pi,x}_t \right)
- \mathrm{e}^{-\int_0^t \alpha_\pi \left( X^{\pi,x'}_s \right) \mathrm{d}s} f_\pi \left( X^{\pi,x'}_t \right)
\right| \mathrm{d}t  \right), \\
& A'':= \E \left( \I_{\{\rho_\varphi<\rho_0\}} 
\int_0^\infty \left|
\mathrm{e}^{-\int_0^t \alpha_\pi \left( X^{\pi,x}_s \right) \mathrm{d}s} f_\pi \left( X^{\pi,x}_t \right)
- \mathrm{e}^{-\int_0^t \alpha_\pi \left( X^{\pi,x'}_s \right) \mathrm{d}s} f_\pi \left( X^{\pi,x'}_t \right)
\right| \mathrm{d}t \right).
\end{align*}
Hence, by 
Lemma~\ref{mdc}, 
we have
$A''\leq\p(\rho_\varphi<\rho_0) 2M/\epsilon_0<\epsilon2M/\epsilon_0=\varepsilon/3$.

Since in the summands $A$ and $A'$ 
the coupling succeeds before the components of
$(X^{\pi,x},X^{\pi,x'})$
grow at least $\varphi$ apart, 
we can control these terms using the local regularity of $\alpha_\pi$ and $f_\pi$.
Consider $A$. Add and subtract  
$\mathrm{e}^{-\int_0^t \alpha_\pi ( X^{\pi,x}_s ) \mathrm{d}s} f_\pi ( X^{\pi,x'}_t )$
to obtain the bound: 
\begin{align*}
A \leq & 
 \E  \I_{\{\rho_\varphi>\rho_0\}} 
\int_0^{\rho_0} \left(
\mathrm{e}^{-\epsilon_0 t} \left|f_\pi \left( X^{\pi,x}_t \right)
- f_\pi\left( X^{\pi, x'}_t \right)
\right|+M \left|\mathrm{e}^{-\int_0^t \alpha_\pi \left( X^{\pi,x}_s \right)\mathrm{d}s}
- \mathrm{e}^{-\int_0^t \alpha_\pi \left( X^{\pi,x'}_s \right)\mathrm{d}s}\right|\right)\mathrm{d}t.  
\end{align*}
On the event
$\{\rho_\varphi>\rho_0\}$,
for $t<\rho_\varphi$ it holds that 
$\|X^{\pi,x}_t-X^{\pi,x'}_t\|<\varphi$.
Since $z\mapsto \mathrm{e}^{-z}$ has a positive derivative bounded above by one
for $z\in\R_+$,
$\alpha_\pi-\epsilon_0\geq0$
and both $f_\pi$ and $\alpha_\pi$ are Lipschitz with constant $M$ on the ball of radius
$\varphi$ around $x$, we get
\begin{align*}
A \leq & 
 \E  \I_{\{\rho_\varphi>\rho_0\}} 
\int_0^{\rho_0} \left(M\varphi
\mathrm{e}^{-\epsilon_0 t} +M\mathrm{e}^{-\epsilon_0 t}  \int_0^t\left|
\alpha_\pi ( X^{\pi,x}_s ) -  \alpha_\pi ( X^{\pi,x'}_s)\right|\mathrm{d}s\right)\mathrm{d}t
< \varphi\left(\frac{M}{\epsilon_0} +\frac{M^2}{\epsilon_0^2}\right)\leq \frac{\varepsilon}{3},
\end{align*}
where the last inequality follows from~\eqref{eq:def_of_phi}.
Furthermore,
since 
$X^{\pi,x}_t=X^{\pi,x'}_t$ for all 
$t\geq \rho_0$,
it holds that the following expectation 
equals $A'$:
$$\E \I_{\{\rho_\varphi>\rho_0\}} 
\int_{\rho_0}^\infty \mathrm{e}^{-\rho_0\epsilon_0}\left|
\mathrm{e}^{-\int_0^{\rho_0} (\alpha_\pi ( X^{\pi,x}_s )-\epsilon_0) \mathrm{d}s} 
- \mathrm{e}^{-\int_0^{\rho_0} (\alpha_\pi ( X^{\pi,x'}_s )-\epsilon_0) \mathrm{d}s} 
\right| 
\mathrm{e}^{-\int_{\rho_0}^t \alpha_\pi ( X^{\pi,x}_s ) \mathrm{d}s} \left|f_\pi ( X^{\pi,x}_t )\right|
\mathrm{d}t.
$$
Since $|f_\pi|<M$, $\alpha_\pi\geq\epsilon_0$, $|\mathrm{e}^{-z}-\mathrm{e}^{-y}|\leq |z-y|$ for $z,y\in\R_+$
and
$\mathrm{e}^{-t\epsilon_0}\leq\mathrm{e}^{-1}\epsilon_0$
for
$t\in\R_+$
we find
$$
A'\leq 
\E \left( \I_{\{\rho_\varphi>\rho_0\}} M
\mathrm{e}^{-\rho_0\epsilon_0} 
\int_0^{\rho_0} \left|\alpha_\pi ( X^{\pi,x}_s ) -\alpha_\pi ( X^{\pi,x'}_s )\right| \mathrm{d}s 
\right)
\leq M^2\varphi 
\E \left( \I_{\{\rho_\varphi>\rho_0\}} 
\mathrm{e}^{-\rho_0\epsilon_0} \rho_0\right)
\leq \varphi\frac{M^2}{\mathrm{e}\epsilon_0},
$$
which is by~\eqref{eq:def_of_phi} less than $\varepsilon/3$.
Hence, for any 
$\|x-x'\| \leq \varphi'$,
we proved that  
$|V_\pi(x) - V_\pi(x')| < \epsilon$,
which concludes the proof of the lemma. 
\end{proof}

\begin{remark}
\label{rem:No_coupling}
The proofs of Lemmas~\ref{mdc} and~\ref{conIHM} show that if the locally Lipschitz property in 
Assumption~\ref{ass1IHM} is substituted by the globally Lipschitz requirement, we can conclude that
the payoff function $V_\pi$ is in fact uniformly continuous. However, the coupling from~\cite{article6} 
may still not be successful, since  the global Lipschitz condition controls globally
the local variability of the coefficients. The coupling may fail because the assumptions 
in~\cite[Thm.~1]{article6} constrain the global variability of $\s_\pi$. In fact, the idea of the proof of Lemma~\ref{mdc} can be used to
construct an example where 
$\p(\rho_0(Y)<\infty)<1$ by bounding the norm of $\|Y\|^2$ from below by a squared Bessel process
of dimension greater than two on an event of positive probability. 
\end{remark}

\subsubsection{A version of the Ascoli-Arzela Theorem}
The following fact is key for proving the existence of 
the optimal strategy and showing that a subsequence of $\{\pi_N\}_{N\in\N}$
in~\eqref{piaIHM} converges to it.  

\begin{lemma}
\label{AA}
Let
$(M_1,d_1)$
and
$(M_2,d_2)$
be compact metric spaces, and for every
$n \in \N$
let
$f_n : M_1 \to M_2$.
If the sequence
$\{ f_n \}_{n \in \N}$
is equicontinuous, i.e.
$$\forall \epsilon > 0 \quad \exists \delta > 0 \quad \forall x,y \in M_1 \quad \forall n \in \N : \quad
d_1(x,y) < \delta \implies d_2(f_n(x),f_n(y)) < \epsilon,$$
then there exists a uniformly convergent subsequence $\{f_{n_k}\}_{k\in\N}$, i.e. $\exists f:M_1\to M_2$ such that 
for every $\epsilon>0$ there exists $N\in\N$ such that $\sup_{x\in M_1}d_2(f_{n_k}(x),f(x))<\epsilon$ for all $k\geq N$.
\end{lemma}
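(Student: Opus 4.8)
The plan is to follow the classical three-step proof of Ascoli--Arzelà, adapted to the case where the target $M_2$ is an arbitrary compact metric space rather than $\R$, so that boundedness is automatic and the role of Bolzano--Weierstrass is played by the sequential compactness and completeness of $M_2$.

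First I would use that $(M_1,d_1)$, being compact metric, is separable, and fix a countable dense subset $D=\{x_j:j\in\N\}$ of $M_1$. Then, since $(M_2,d_2)$ is sequentially compact, I would run the standard diagonal extraction: choose an infinite index set $S_1\subseteq\N$ along which $\{f_n(x_1)\}$ converges, then inductively an infinite $S_{j+1}\subseteq S_j$ along which $\{f_n(x_{j+1})\}$ converges, and finally let $n_k$ be the $k$-th smallest element of $S_k$. The resulting subsequence $\{f_{n_k}\}$ then has the property that $\{f_{n_k}(x_j)\}_k$ converges in $M_2$ for every $j\in\N$.

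Next I would promote pointwise convergence on $D$ to uniform Cauchyness on $M_1$ using equicontinuity together with the compactness of $M_1$. Given $\epsilon>0$, equicontinuity supplies $\delta>0$ with $d_2(f_n(x),f_n(y))<\epsilon/3$ whenever $d_1(x,y)<\delta$, uniformly in $n$; density of $D$ and compactness of $M_1$ let me cover $M_1$ by finitely many $\delta$-balls centred at points $x_{j_1},\dots,x_{j_m}\in D$; convergence of $\{f_{n_k}\}$ at these finitely many points yields $N$ with $d_2(f_{n_k}(x_{j_i}),f_{n_l}(x_{j_i}))<\epsilon/3$ for all $k,l\ge N$ and all $i\le m$; and a three-term triangle inequality (pass to the nearest centre, then between the two values there) gives $\sup_{x\in M_1}d_2(f_{n_k}(x),f_{n_l}(x))\le\epsilon$ for $k,l\ge N$. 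Finally, since $M_2$ is compact and hence complete, the limit $f(x):=\lim_{k\to\infty} f_{n_k}(x)$ exists for every $x\in M_1$, and letting $l\to\infty$ in the previous inequality gives $\sup_{x\in M_1}d_2(f_{n_k}(x),f(x))\le\epsilon$ for all $k\ge N$, which is exactly the asserted uniform convergence.

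I do not expect a genuine obstacle here; the only points needing slight care are the bookkeeping of the diagonal extraction and the fact that, $M_2$ being a general compact metric space rather than $\R$ or a Banach space, the second step must be phrased via the Cauchy criterion and one must invoke the completeness of $M_2$ explicitly in order to produce the limit function $f$ (which, incidentally, is automatically continuous by equicontinuity, though this is not needed for the statement).
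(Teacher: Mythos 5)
Your proposal is correct and follows essentially the same route as the paper: both construct a countable dense subset of $M_1$ by total boundedness, diagonally extract a subsequence converging pointwise on that dense set, then use equicontinuity together with a finite $\delta$-net to upgrade this to uniform Cauchyness, concluding via completeness of $M_2$. The only cosmetic difference is that the paper fixes explicit finite $1/m$-nets $S_m$ in advance, whereas you cover $M_1$ by finitely many $\delta$-balls centred at dense points only when they are needed; the substance is identical.
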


\begin{proof}
Let $B( x, 1/m) := \{ y \in M_1: d_1(x,y) < 1/m \}$
be a ball of radius $1/m$, $m\in\N$,  centred at $x\in M_1$.
Since $M_1$ is compact and metric, it is totally bounded:  
$\exists S_m\subseteq M_1$ finite satisfying 
$M_1=\cup_{x\in S_m}B(x,1/m)$.
Then $S:=\cup_{m\in\N} S_m=\{ x_n \in M_1;\ n \in \N \}$ is countable and dense in $M_1$.
We now apply the standard diagonalisation argument to find the subsequence in
the lemma.

Let $\iota_1:\N\to\N$ be
an increasing function defining a  subsequence
$\{ f_{\iota_1(n)} \}_{n \in \N}$ that converges at
$x_1$, i.e.
$\lim_{n\to\infty}f_{\iota_1(n)}(x_1)$ exists in $M_2$.
Such a function $\iota_1$ exists since $M_2$ is compact.
Assume now that we have constructed an increasing
$\iota_k:\N\to\N$ such that 
$\{ f_{\iota_k(n)} \}_{n \in \N}$
converges on the set 
$\{ x_1,\ldots,x_k\}$ for some $k\in\N$.
Then there exists an increasing
$\iota:\N\to\N$ 
such that the sequence of functions 
$\{ f_{\iota_{k+1}(n)} \}_{n \in \N}$,
where 
$\iota_{k+1}:=\iota_k\circ\iota$,
converges at 
$x_{k+1}$
as well as on the set 
$\{ x_1,\ldots,x_k\}$,
as it is a subsequence of 
$\{ f_{\iota_k(n)} \}_{n \in \N}$.
Since $k\in \N$ was arbitrary, we have defined a sequence of subsequences of
$\{ f_n \}_{n \in \N}$, such that the $k$-th subsequence converges on 
$\{ x_1,\ldots,x_k\}$.

Consider the ``diagonal'' subsequence  
$\{ f_{n_k} \}_{k \in \N}$,
$f_{n_k}:= f_{\iota_k(k)}$ for any $k \in \N$.
By construction it converges on $S$.
We now prove that it is uniformly Cauchy, 
which implies uniform convergence since
$M_2$ is complete. 
Pick any $\epsilon>0$.
By equicontinuity 
$\exists m \in \N$
such that for any 
$k\in\N$
and
$x,y\in M_1$ satisfying
$d_1(x,y) < 1/m$,
it holds that
$d_2(f_{n_k}(x),f_{n_k}(y)) < \epsilon/3$.
Furthermore, since $S_m$ is finite, 
$\exists N \in \N$
such that
for all natural numbers $k_1,k_2\geq N$
we have
$d_2(f_{n_{k_1}}(y),f_{n_{k_2}}(y)) < \epsilon/3$
for all $y\in S_m$.
Finally, for any 
$x \in M_1$
there exists 
$y\in S_m$
such that $d_1(x,y)<1/m$.
Hence,
for any $k_1,k_2\geq N$
it holds that 
$$ d_2(f_{n_{k_1}}(x),f_{n_{k_2}}(x)) \leq 
d_2(f_{n_{k_1}}(x),f_{n_{k_1}}(y)) +
d_2(f_{n_{k_1}}(y),f_{n_{k_2}}(y)) +
d_2(f_{n_{k_2}}(y),f_{n_{k_2}}(x)) 
< \epsilon.$$
Since $x\in M_1$ was arbitrary, the lemma follows. 
\end{proof}

\subsubsection{A uniformly integrable martingale}   
If the process $X^{\pi,x}$ in~\eqref{sde2IHM}, controlled by a Markov policy $\pi$, exists for all $x\in\R^d$,
then 
$X^{\pi,\cdot}$ 
is a strong Markov process~\cite[Thm~4.30, p.\ 322]{Karatzas}, 
since $\s$ and $\mu$ are bounded by Assumption~\ref{ass1IHM}.
Define the additive functional 
$F(X^{\pi,x})=(F_t(X^{\pi,x}))_{t\in[0,\infty]}$, 
\begin{equation}
\label{eq:F_gains}
F_t(X^{\pi,x}):=\int_0^t\mathrm{e}^{-\int_0^u \alpha_\pi \left( X^{\pi,x}_s \right)\mathrm{d}s}
f_\pi \left( X^{\pi,x}_u \right) \mathrm{d}u \qquad \text{for $t\in[0,\infty]$. }
\end{equation}
\begin{remark}
\label{rem:V_F_bounded}
Note that 
$V_\pi(x)=\E F_\infty(X^{\pi,x})$ and,
by Assumption~\ref{ass1IHM}, the process $|F(X^{\pi,x})|$ is bounded by some constant $C_0>0$. Hence $|F_\infty(X^{\pi,x})|<C_0$
and $|V_\pi(x)|<C_0$.
\end{remark}

\begin{lemma}
\label{martingaleIHM}
The following holds for every Markov policy
$\pi$,
$x \in \R^d$
and $(\F_t)$-stopping time
$T$:
\begin{align*}
 \E \big( F_\infty(X^{\pi,x})\vert \F_T \big) = F_T(X^{\pi,x}) + \I_{\{ T < \infty\}} \mathrm{e}^{-\int_0^{T} \alpha_\pi \left( X^{\pi,x}_s \right) \mathrm{d}s}\,
V_\pi\left( X^{\pi,x}_{T} \right).
\end{align*}
In particular, the process
$M=(M_r)_{r\in[0,\infty]}$
is a uniformly integrable martingale, where
\begin{align*}
M_r := F_r(X^{\pi,x}) + \I_{\{ r < \infty\}} \mathrm{e}^{-\int_0^{r} \alpha_\pi \left( X^{\pi,x}_s \right) \mathrm{d}s} \,V_\pi\left( X^{\pi,x}_{r} \right). 
\end{align*}
\end{lemma}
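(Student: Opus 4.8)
The plan is to exploit the strong Markov property of $X^{\pi,\cdot}$ together with the additive structure of the functional $F$. First I would record the cocycle identity: for any $0\le T\le\infty$ (with $T<\infty$ on the relevant event),
\begin{align*}
F_\infty(X^{\pi,x}) = F_T(X^{\pi,x}) + \mathrm{e}^{-\int_0^T \alpha_\pi(X^{\pi,x}_s)\,\mathrm{d}s}\, F_\infty\big(\theta_T X^{\pi,x}\big),
\end{align*}
where $\theta_T$ is the shift operator, so that $F_\infty(\theta_T X^{\pi,x})=\int_0^\infty \mathrm{e}^{-\int_0^u \alpha_\pi(X^{\pi,x}_{T+s})\,\mathrm{d}s} f_\pi(X^{\pi,x}_{T+u})\,\mathrm{d}u$; this is just a change of variables in the time integral defining $F_\infty$, splitting at $T$ and factoring the exponential. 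The factor $\mathrm{e}^{-\int_0^T\alpha_\pi\,\mathrm{d}s}$ and $F_T$ are both $\F_T$-measurable, so conditioning on $\F_T$ leaves only $\E\big(F_\infty(\theta_T X^{\pi,x})\mid\F_T\big)$ to handle.

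The second step is to apply the strong Markov property of $X^{\pi,\cdot}$, valid by \cite[Thm~4.30, p.~322]{Karatzas} since $\sigma$ and $\mu$ are bounded (Assumption~\ref{ass1IHM}). On $\{T<\infty\}$, $F_\infty\circ\theta_T$ is a bounded measurable functional of the post-$T$ path, whose conditional law given $\F_T$ is that of $X^{\pi,y}$ started at $y=X^{\pi,x}_T$; hence $\E\big(F_\infty(\theta_T X^{\pi,x})\mid\F_T\big)=\big(\E F_\infty(X^{\pi,y})\big)\big|_{y=X^{\pi,x}_T}=V_\pi(X^{\pi,x}_T)$ on $\{T<\infty\}$, using $V_\pi(y)=\E F_\infty(X^{\pi,y})$ from Remark~\ref{rem:V_F_bounded}. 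On $\{T=\infty\}$ the indicator $\I_{\{T<\infty\}}$ vanishes and $F_T=F_\infty$, so both sides of the claimed identity reduce to $F_\infty(X^{\pi,x})$; this is consistent since $F_\infty$ is $\F_\infty$-measurable. This yields the displayed conditional expectation formula.

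For the martingale statement: the identity just proved, applied with a deterministic time $t$ (and then with $r=\infty$), shows $M_r=\E(F_\infty(X^{\pi,x})\mid\F_r)$ for every $r\in[0,\infty]$. A process of this Doob-martingale form, closed by the integrable random variable $F_\infty(X^{\pi,x})$, is automatically a uniformly integrable martingale: the tower property gives the martingale property, and $\{\E(F_\infty\mid\F_r):r\in[0,\infty]\}$ is UI because $|F_\infty(X^{\pi,x})|<C_0$ is bounded (Remark~\ref{rem:V_F_bounded}), so the family is even uniformly bounded. One should check that $M$ is adapted and that the conditional-expectation identity indeed holds at each fixed $r$, including $r=\infty$ where $M_\infty=F_\infty(X^{\pi,x})$.

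The only genuinely delicate point is the measurability bookkeeping in the strong Markov step: one must justify that $F_\infty\circ\theta_T$ is a well-defined measurable functional of the shifted path on the canonical path space and that the strong Markov property applies to this particular (bounded, not continuous) functional — standard via a monotone-class or bounded-pointwise-limit argument reducing to functionals of finitely many coordinates. Everything else is routine: the cocycle identity is a change of variables, and the uniform integrability is immediate from the uniform bound $C_0$. I would therefore spend most of the write-up on the cocycle identity and the clean invocation of the strong Markov property, and merely remark that UI follows from boundedness.
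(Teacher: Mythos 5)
Your proof is correct and follows essentially the same route as the paper's: split the time integral at $T$ (your cocycle identity), pull out the $\F_T$-measurable discount factor, apply the strong Markov property to identify the remaining conditional expectation with $V_\pi(X^{\pi,x}_T)$, and read off the UI martingale statement from the Doob representation $M_r=\E(F_\infty(X^{\pi,x})\mid\F_r)$ together with the uniform bound $C_0$. The paper compresses all of this into a three-line display without naming the cocycle identity or commenting on measurability, but the argument is identical.
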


\begin{proof}
The following calculations imply the lemma: 
\begin{align*}
& \E \big( F_\infty(X^{\pi,x})\vert \F_T \big) = F_T(X^{\pi,x})
+ \E \left( \I_{\{ T < \infty \}} \int_0^{\infty}
\mathrm{e}^{-\int_0^{t + T} \alpha_\pi \left( X^{\pi,x}_s \right) \mathrm{d}s}
f_\pi \left( X^{\pi,x}_{t + T} \right) \mathrm{d}t \,\middle\vert\, \F_{T} \right) \\
& = F_T(X^{\pi,x}) + \I_{\{ T < \infty \}} \mathrm{e}^{-\int_0^{T} \alpha_\pi \left( X^{\pi,x}_t \right) \mathrm{d}t}
\,\E \left( \int_0^{\infty} \mathrm{e}^{-\int_0^{t} \alpha_\pi \left( X^{\pi,x}_{s + T} \right) \mathrm{d}s}
f_\pi \left( X^{\pi,x}_{t + T} \right) \mathrm{d}t \,\middle\vert\, \F_{T} \right) \\
& = F_T(X^{\pi,x}) + \I_{\{ T < \infty \}} \mathrm{e}^{-\int_0^{T} \alpha_\pi \left( X^{\pi,x}_t \right) \mathrm{d}t}
\,V_\pi(X^{\pi,x}_{T}),
\end{align*}
where we applied the strong Markov property of $X^{\pi,\cdot}$ in the last step.
\end{proof}

\subsection{Proofs of results in Section~\ref{ch3IHM}}
\label{subsec:Proofs_Multidim}
\begin{proof}[Proof of Proposition \ref{operatorIHM}]
Assume $m=d$, cf. Remark~\ref{rem:d_is_m}.
It suffices to prove that the PDE holds on the ball 
$D:=\{y\in\R^d:\|y-x'\|<1\}$
for any 
$x'\in\R^d$.
Fix
$x \in D$
and define
$\tau:=\inf\{t\in\R_+:X^{\pi,x}_t\in\partial D\}$ (with $\inf\emptyset =\infty$)
to be the first time the process
$X^{\pi,x}$
hits the boundary 
$\partial D:=\{y\in\R^d:\|y-x'\|=1\}$
of
$D$.
Note that, by  Assumption~\ref{ass1IHM}, we have
$\tau<\infty$.

Let
$v \in \C^2(D) \cap \C(\bar{D})$, where  $\bar D:=D\cup \partial D$,
denote 
a solution of the boundary value problem
$$L_\pi v - \alpha_\pi v + f_\pi = 0\quad \text{in} \quad D, \qquad \text{where $v = V_\pi$ on  $\partial D$.}$$
Since $\pi$ is locally Lipschitz
and~\eqref{eq:h_bound_LLiP}
in Assumption~\ref{ass1IHM} holds, the coefficients 
$\s_\pi,\mu_\pi,f_\pi,\alpha_\pi$
are
$(1/2)$-H\"{o}lder (in fact Lipschitz) on $\bar D$. 
The boundary data $V_\pi|_{\partial D}$ is continuous by Lemma~\ref{conIHM},
$\alpha_\pi \geq 0$
and
$\s_\pi$ satisfies~\eqref{lambda}. 
Hence, by~\cite[Thm~19, p.\ 87]{FriedmanParabolic}, the function
$v$ exists, is unique and  $\he v$ is $(1/2)$-H\"older.

Note that, 
for all $t\in[0,\infty]$,
we have
$X^{\pi,x}_{t \wedge \tau}\in \bar D$.
Hence
we can define
\begin{equation}
\label{eq:def_of_proc_S}
Y_t :=  F_{t \wedge \tau}(X^{\pi,x}) +
\mathrm{e}^{-\int_0^{t \wedge \tau} \alpha_{\pi} \left( X^{\pi,x}_r \right) \mathrm{d}r}
v \left( X^{\pi,x}_{t \wedge \tau} \right),\qquad  \text{for $t\in[0,\infty]$,}
\end{equation}
where the process
$F_\cdot(X^{\pi,x})$
is given in~\eqref{eq:F_gains} above. 
The process 
$Y = (Y_t)_{t \in[0,\infty]}$
is bounded by a constant
and by definition converges almost sure 
$\lim_{t\to \infty}Y_t= Y_\infty$.
Since $v$ solves the boundary value problem above
and
$X^{\pi,x}$
satisfies SDE~\eqref{sde2IHM},
It\^o's formula on the stochastic interval
$[0,\tau]\subset\R_+$
yields
\begin{align*}
Y_t 
& = v(x) + \int_0^{t \wedge \tau} \mathrm{e}^{-\int_0^s \alpha_{\pi} \left( X^{\pi,x}_r \right) \mathrm{d}r}
 \gr v\left( X^{\pi,x}_s \right)^T \s_\pi \left( X^{\pi,x}_s \right) \mathrm{d}B_s, \quad t \in[0,\infty],
\end{align*}
making
$Y$
into a local martingale. 
Since $Y$ is bounded, it is a uniformly integrable martingale
satisfying $v(x)=Y_0=\E[ Y_\infty]$. 
Since
$v=V_\pi$ on $\partial D$
and
$X^{\pi,x}_\tau\in \partial D$,
the definition of $Y$ in~\eqref{eq:def_of_proc_S}
and
Lemma~\ref{martingaleIHM} (applied to the stopping time $T:=\tau$)
yield
\begin{align*}
Y_\infty 
& =  F_\tau(X^{\pi,x}) +
\mathrm{e}^{-\int_0^{\tau} \alpha_{\pi} \left( X^{\pi,x}_r \right) \mathrm{d}r}
V_\pi \left( X^{\pi,x}_{\tau} \right) = \E \left( F_\infty(X^{\pi,x}) \middle\vert \F_{\tau} \right),
\end{align*}
implying 
$v(x)=\E ( F_\infty(X^{\pi,x}))=V_\pi(x)$.

The uniqueness follows similarly: let $v$ be another bounded solution of the Poisson equation on $\R^d$.
Define the process $Y$ as in~\eqref{eq:def_of_proc_S} with $\tau\equiv\infty$ and $t<\infty$.
As above we have $v(x)=\E Y_t$ for all $t\in\R_+$. Then the DCT, applicable since $v$ is bounded,
yields $v(x)=\lim_{t\uparrow\infty} \E Y_t = V_\pi(x)$.
\end{proof}

\begin{proof}[Proof of Theorem \ref{decreasingIHM}]
Let
$\pi_n$ and $\pi_{n+1}$
be as in~\eqref{piaIHM}, $n\in\N\cup\{0\}$.
Define 
$Y=(Y_t)_{t\in\R_+}$ by
\begin{align}
\label{eq:S_defin}
& Y_t :=  F_t( X^{\pi_{n+1},x}) + \mathrm{e}^{-\int_0^t \alpha_{\pi_{n+1}} \left( X^{\pi_{n+1},x}_r \right) \mathrm{d}r}
\,V_{\pi_n} \left( X^{\pi_{n+1},x}_{t} \right), \quad t \in \R_+.
\end{align}
where 
$F_\cdot(X^{\pi_{n+1},x})$
is given in~\eqref{eq:F_gains} above. 
Define 
$\tau_m:=\inf\{t\geq0:\|X^{\pi_{n+1},x}_t\|=m\}$
for any fixed $m>\|x\|$
and note that $\tau_m<\infty$
by  Assumption~\ref{ass1IHM}.
It\^o's formula, applicable by Proposition~\ref{operatorIHM}, yields
\begin{equation*}
Y_{\cdot \wedge \tau_m} = V_{\pi_n}(x)
+ M + \int_0^{\cdot \wedge \tau_m} \mathrm{e}^{-\int_0^s \alpha_{\pi_{n+1}} \left( X^{\pi_{n+1},x}_r \right) \mathrm{d}r}
\left( f_{\pi_{n+1}} + L_{\pi_{n+1}} V_{\pi_n} - \alpha_{\pi_{n+1}} V_{\pi_n} \right)
\left( X^{\pi_{n+1},x}_s \right) \mathrm{d}s,
\end{equation*}
where $M=(M_t)_{t\in\R_+}$,
$M_t:= \int_0^{t \wedge \tau_m} \mathrm{e}^{-\int_0^s \alpha_{\pi_{n+1}}
( X^{\pi_{n+1},x}_r) \mathrm{d}r}\,
( \gr V_{\pi_n}^T \s_{\pi_{n+1}}) ( X^{\pi_{n+1},x}_s ) \mathrm{d}B_s$,
is a local martingale. 
Since the functions
$\s_{\pi_{n+1}}$
and
$\gr V_{\pi_n}$
are bounded on the ball
$\{y\in\R^d:\|y\|\leq m\}$
by Assumption~\ref{ass1IHM} and Proposition~\ref{operatorIHM}, respectively,
and $\alpha_{\pi_{n+1}}>\epsilon_0>0$, the quadratic variation of 
$M$ 
is bounded above by a constant. Hence $M$ is a uniformly integrable martingale. 
In particular, $\E M_t=0$ for all $t\in\R_+$. 
By~\eqref{piaIHM} 
and
Proposition~\ref{operatorIHM}, 
we have 
\begin{equation*} 
\left(
f_{\pi_{n+1}} + L_{\pi_{n+1}} V_{\pi_n} - \alpha_{\pi_{n+1}} V_{\pi_n}\right)
S_{\pi_{n+1}}
\> \leq\> 
\left(
f_{\pi_n} + L_{\pi_n} V_{\pi_n} - \alpha_{\pi_n} V_{\pi_n}\right) S_{\pi_n}= 0 
\qquad \text{on $\R^d$.}
\end{equation*}
Since
$S_{\pi_{n+1}}>0$,
we have
$E \left( Y_{t \wedge \tau_m} \right) \leq V_{\pi_n}(x)$.
Hence~\eqref{eq:S_defin}, Assumption~\ref{ass1IHM} and the DCT, 
as  
$t\uparrow\infty$,
yield
\begin{eqnarray*}
V_{\pi_n}(x) & \geq 
\E F_{\tau_m}( X^{\pi_{n+1},x})  + 
\E 
V_{\pi_n} \left( X^{\pi_{n+1},x}_{\tau_m} \right)
\mathrm{e}^{-\int_0^{\tau_m}
\alpha_{\pi_{n+1}} \left( X^{\pi_{n+1},x}_r \right) \mathrm{d}r}.
\end{eqnarray*}
Hence, 
by 
Remark~\ref{rem:V_F_bounded},
we have
$V_{\pi_n}(x)\geq \E F_{\tau_m}( X^{\pi_{n+1},x})  - 
C_0 \E \mathrm{e}^{-\epsilon_0\tau_m}$.
Since 
$X^{\pi_{n+1},x}$
satisfies SDE~\eqref{sde2IHM} for all $t\in\R_+$,
we have
$\lim_{m\uparrow\infty}\tau_m=\infty$. 
The DCT and
Remark~\ref{rem:V_F_bounded}
yield
$V_{\pi_{n+1}}(x)=  \E F_\infty( X^{\pi_{n+1},x})=\lim_{m\uparrow\infty}\E F_{\tau_m}( X^{\pi_{n+1},x})- C_0 \E \mathrm{e}^{-\epsilon_0\tau_m}\leq V_{\pi_n}(x)$,
which concludes the proof.
\end{proof}

\begin{proof}[Proof of Proposition \ref{subsequenceIHM}]
Run~\eqref{piaIHM} to produce a sequence of policies 
$\{\pi_N\}_{N\in\N}$, starting from a constant policy $\pi_0$.
Fix an arbitrary $K_0>0$ and consider the restriction of this sequence 
onto the closed ball $D_{K_0}$.
Since the Lipschitz constant of $\pi_0$ is equal to zero and hence smaller than $C_{K_0}$,
Assumption~\ref{ass3IHM}
implies 
$V_{\pi_0}\in\Sc_{B_{K_0},K_0}$.
Assumption~\ref{ass2IHM}
implies that the Lipschitz constant 
of $\pi_1$ is also at most $C_{K_0}$.
Iterating this argument implies that all the policies 
in the sequence
$\{\pi_N\}_{N\in\N}$
have the same Lipschitz constant on $D_{K_0}$,
making it equicontinuous on $D_{K_0}$. 
By Lemma~\ref{AA} above, there exists a subsequence
that converges uniformly on $D_{K_0}$ to a function $\pi_\infty^0:D_{K_0}\to A$.
Moreover, $\pi_\infty^0$ is also Lipschitz with a constant bounded above by $C_{K_0}$.

Let $K_1:=2K_0$
and repeat 
the argument above for 
$K_1$ 
and the subsequence of 
$\{\pi_N\}_{N\in\N}$
constructed in the previous paragraph. 
This  yields a further subsequence of the policies 
that converges uniformly to a Lipschitz function
$\pi_\infty^1:D_{K_1}\to A$
with the Lipschitz constant bounded above by 
$C_{K_1}$.
Since the sequence we started with 
converges pointwise to 
$\pi_\infty^0$
on $D_{K_0}\subset D_{K_1}$, 
so must its every subsequence.
Hence it holds that 
$\pi_\infty^1(x)=\pi_\infty^0(x)$
for all $x\in D_{K_0}$.

For $k\in\N$, let 
$K_k:=2K_{k-1}$ 
and construct inductively 
$\pi_\infty^k:D_{K_k}\to A$
as above. 
Then the function 
$\pi_{\lim}:\R^d\to A$, 
given by $\pi_{\lim}(x):=\pi_\infty^n(x)$
for any $n\in\N$ such that $x\in D_{K_n}$,
is well-defined and locally Lipschitz. 
Let the policy $\pi_{n_k}:\R^d\to A$ be the $k$-th element of the convergent subsequence 
used to define $\pi_\infty^k:D_{K_k}\to A$.
Then, by construction, the ``diagonal'' subsequence $\{\pi_{n_k}\}_{k\in \N}$
of 
$\{\pi_N\}_{N\in\N}$
converges uniformly to 
$\pi_{\lim}$ 
on $D_K$ for any $K>0$.
\end{proof}

\begin{proof}[Proof of Theorem \ref{limitsIHM}]
Let
$\{ \pi_{n_k} \}_{k \in \N}$
be a subsequence 
of the output of~\eqref{piaIHM},
$\{ \pi_N \}_{N \in \N}$,
that converges locally uniformly to a policy
$\pi_{\lim}=\lim_{k\uparrow\infty}\pi_{n_k}$.
By~\eqref{eq:V_lim_def} and Theorem~\ref{decreasingIHM},
$V_{\pi_{n_k}}\searrow V_{\lim}$ as $k\to\infty$.
Fix $K>0$
and let 
$\tau_K:=\inf\{t\in\R_+:X^{\pi_{\lim},x}_t-x\in\partial D_K\}$ 
be the first time 
$X^{\pi_{\lim},x}$
hits the boundary of the closed ball $x+D_K$ with radius $K$,
centred at an arbitrary $x\in\R^d$.

Pick
$k \in \N$,
$t\in\R_+$
and define
$$S^k_t :=  \int_0^{t} \mathrm{e}^{-\int_0^s \alpha_{\pi_{n_k}} \left( X^{\pi_{\lim},x}_r \right) \mathrm{d}r}
f_{\pi_{n_k}} \left( X^{\pi_{\lim},x}_s \right) \mathrm{d}s +
\mathrm{e}^{-\int_0^t \alpha_{\pi_{n_k}} \left( X^{\pi_{\lim},x}_r \right) \mathrm{d}r}\,
V_{\pi_{n_k}} \left( X^{\pi_{\lim},x}_{t} \right).$$
Apply 
It\^o's formula to the process
$S^k = (S^k_t)_{t \geq 0}$
on the stochastic interval $[0,\tau_K)$
to get
\begin{align*}
& S^k_{t \wedge \tau_K} = V_{\pi_{n_k}}(x)
+ \int_0^{t \wedge \tau_K} \mathrm{e}^{-\int_0^s \alpha_{\pi_{n_k}} \left( X^{\pi_{\lim},x}_r \right) \mathrm{d}r}\,
\left( \gr V_{\pi_{n_k}} \right)^T \s_{\pi_{\lim}} \left( X^{\pi_{\lim},x}_s \right) \mathrm{d}B_s \\
& \qquad \qquad + \int_0^{t \wedge \tau_K} \mathrm{e}^{-\int_0^s \alpha_{\pi_{n_k}} \left( X^{\pi_{\lim},x}_r \right) \mathrm{d}r}
\left( f_{\pi_{n_k}} + L_{\pi_{\lim}} V_{\pi_{n_k}} - \alpha_{\pi_{n_k}} V_{\pi_{n_k}} \right)
\left( X^{\pi_{\lim},x}_s \right) \mathrm{d}s.
\end{align*}
Note that
$\s_{\pi_{\lim}}$
and
$\gr V_{\pi_{n_k}}$
are bounded on 
$D_K$
by Assumption~\ref{ass1IHM} and Proposition~\ref{operatorIHM}, respectively,
and $\alpha_{\pi_{n_k}}>\epsilon_0>0$. Hence the quadratic variation of 
the stochastic integral is bounded, making it into a true martingale. 
This fact and the equality $\alpha_{\pi_{n_k}} V_{\pi_{n_k}}-f_{\pi_{n_k}}= L_{\pi_{n_k}} V_{\pi_{n_k}}$ (Prop.~\ref{operatorIHM})
yield 
\begin{eqnarray}
\nonumber
 \E  S^k_{t \wedge \tau_K}  
 &  = & V_{\pi_{n_k}}(x) + \E  \int_0^{t \wedge \tau_K} \mathrm{e}^{-\int_0^s
\alpha_{\pi_{n_k}} \left( X^{\pi_{\lim},x}_r \right) \mathrm{d}r} \left(
f_{\pi_{n_k}} + L_{\pi_{\lim}} V_{\pi_{n_k}} - \alpha_{\pi_{n_k}} V_{\pi_{n_k}}
\right) \left( X^{\pi_{\lim},x}_s \right) \mathrm{d}s \\
 & =  & V_{\pi_{n_k}}(x) +
\E  \int_0^{t \wedge \tau_K} \mathrm{e}^{-\int_0^s \alpha_{\pi_{n_k}}
\left( X^{\pi_{\lim},x}_r \right) \mathrm{d}r} \left( L_{\pi_{\lim}} V_{\pi_{n_k}} - L_{\pi_{n_k}} V_{\pi_{n_k}} \right) \left( X^{\pi_{\lim},x}_s
\right) \mathrm{d}s.
\label{eq:pi_lim_equals_pi_k}
\end{eqnarray}

Note
$[L_{\pi_{\lim}}  - L_{\pi_{n_k}}] V_{\pi_{n_k}} =
( \mu_{\pi_{\lim}} - \mu_{\pi_{n_k}} )^T \gr V_{\pi_{n_k}}+
\frac{1}{2} \tr ( ( \s_{\pi_{\lim}} + \s_{\pi_{n_k}})^T \he V_{\pi_{n_k}} ( \s_{\pi_{\lim}} - \s_{\pi_{n_k}}))$.
Since,
for every $k$,
$V_{\pi_{n_k}}$
solves the corresponding Poisson equation in Proposition~\ref{operatorIHM}
and, by  
Assumption~\ref{ass1IHM} and Remark~\ref{rem:V_F_bounded},
the family of functions 
$\{ \s_{\pi_{n_k}},  \mu_{\pi_{n_k}},\alpha_{\pi_{n_k}}, f_{\pi_{n_k}},  V_{\pi_{n_k}} :k \in \N\}$
is uniformly bounded on the ball $x+D_K$,
Schauder's boundary estimate for
elliptic PDEs~\cite[p.~86]{FriedmanParabolic}
implies that the sequences
$\{ \gr V_{\pi_{n_k}} \}_{k \in \N}$
and
$\{ \he V_{\pi_{n_k}} \}_{k \in \N}$
are also uniformly bounded on
$x+D_K$.
Since
$\alpha_{\pi_{n_k}}>\epsilon_0>0$
for all $k\in\N$ and the limits 
$\lim_{k\uparrow\infty}\mu_{\pi_{n_k}}=\mu_{\pi_{\lim}}$
and 
$\lim_{k\uparrow\infty}\s_{\pi_{n_k}}=\s_{\pi_{\lim}}$
are uniform on $x+D_K$,
the DCT and the equality in~\eqref{eq:pi_lim_equals_pi_k} imply 
$\lim_{k\uparrow\infty} \E  S^k_{t \wedge \tau_K}  = V_{\lim}(x)$.
Hence,
the definition of $S^k$ above, 
Assumption~\ref{ass1IHM}, Remark~\ref{rem:V_F_bounded}
and a further application of the DCT
yield 
\begin{equation}
\label{eq:V_lim_final_expression}
V_{\lim}(x) =
 \E  \int_0^{t \wedge \tau_K} \mathrm{e}^{-\int_0^s \alpha_{\pi_{\lim}} \left( X^{\pi_{\lim},x}_r \right) \mathrm{d}r} f_{\pi_{\lim}} \left( X^{\pi_{\lim},x}_s \right) \mathrm{d}s
+ E_{t \wedge \tau_K},
\end{equation}
where
$E_{t \wedge \tau_K}:= \E \mathrm{e}^{-\int_0^{t \wedge \tau_K} \alpha_{\pi_{\lim}} \left( X^{\pi_{\lim},x}_r \right) \mathrm{d}r}\,
V_{\lim} \left( X^{\pi_{\lim},x}_{t \wedge \tau_K} \right)$.

By~\eqref{eq:V_lim_def} and Remark~\ref{rem:V_F_bounded}, 
the inequality
$|V_{\lim}(y)|\leq C_0$ holds for all $y\in\R^d$. 
By Assumption~\ref{ass1IHM} we hence get 
$$0\leq \limsup_{t\wedge K\to\infty}\left| E_{t\wedge K}\right| \leq C_0 \limsup_{t\wedge K\to\infty}\E \mathrm{e}^{-\epsilon_0(t \wedge \tau_K)}= 0,
$$
since $\tau_K\uparrow\infty$ as $K\uparrow\infty$.
The DCT applied 
to the first summand in~\eqref{eq:V_lim_final_expression},
as 
$t\wedge K\to\infty$,
yields
the equality 
$V_{\lim}(x) = V_{\pi_{\lim}}(x)$.
Since $x\in\R^d$ was arbitrary, the theorem follows. 
\end{proof}

\begin{proof}[Proof of Theorem \ref{verificationIHM}]
The second assertion in the theorem follows from the first one and Theorem~\ref{limitsIHM}.
We now establish the first assertion of Theorem~\ref{verificationIHM}.
Equip $A\times A$ with a product metric, e.g. 
$d_\infty((p_1,p_2),(a_1,a_2)) := \max\{d_A(a_1,p_1), d_A(a_2,p_2)\}$,
and let 
$\{ \pi_N\}_{N \in \N}$
be constructed by the~\eqref{piaIHM}. 
As in the proof of Proposition~\ref{subsequenceIHM},
$\{ (\pi_{N+1}, \pi_N) : \R^d \to A \times A \}_{N \in \N}$
are Lipschitz on a closed ball $D_K$ of radius $K>0$ with the Lipschitz constant 
$C_K$, independent of $N$. 
Hence as in the proof of  Proposition~\ref{subsequenceIHM},
there exists a subsequence 
$\{ (\pi_{1+n_k}, \pi_{n_k}) \}_{k \in \N}$
that converges uniformly on every compact subset of $\R^d$
to a locally Lipschitz function 
$(\tilde \pi_{\lim}, \pi_{\lim}) : \R^d \to A \times A$.

Pick any $x\in\R^d$, a policy $\Pi\in\A(x)$, $K>0$
and let 
$\tau_K:=\inf\{t\in\R_+:X^{\Pi,x}_t-x\in\partial D_K\}$ 
be the first time the controlled process 
$X^{\Pi,x}$
hits the boundary of the closed ball $x+D_K$ with radius $K$
(centred at  $x$). Since $\Pi_s\in A$ for all $s\in\R_+$, the~\eqref{piaIHM} implies the inequality
\begin{equation}
\label{eq:Scaled_Pia_estimate}
S_{\Pi_s} (f_{\Pi_s} + L_{\Pi_s} V_{\pi_{n_k}} - \alpha_{\Pi_{s}} V_{\pi_{n_k}}) \geq 
 S_{\pi_{n_k + 1}} (f_{\pi_{n_k + 1}} + L_{\pi_{n_k + 1}} V_{\pi_{n_k}} - \alpha_{\pi_{n_k+1}} V_{\pi_{n_k}}) \quad \text{on $\R^d$.}
\end{equation}
Denote
$\mathcal{L}_\pi h := L_\pi h 
- \alpha_\pi h + f_\pi$ for any policy $\pi$ and $h \in \C^2(\R^d)$.
Then, for 
$k \in \N$,
we find that
\begin{align}
\nonumber
& \E \left( \int_0^{t \wedge \tau_K} \mathrm{e}^{-\int_0^s \alpha_{\Pi_r} \left( X^{\Pi,x}_r \right) \mathrm{d}r} f_{\Pi_s} \left( X^{\Pi,x}_s \right) \mathrm{d}s +
\mathrm{e}^{-\int_0^{t \wedge \tau_K} \alpha_{\Pi_r} \left( X^{\Pi,x}_r \right) \mathrm{d}r}\,
V_{\pi_{n_k}} \left( X^{\Pi,x}_{t \wedge \tau_K} \right) \right) \\
\nonumber
& \stackrel{\text{It\^{o}}}{=} V_{\pi_{n_k}}(x) + \E  \int_0^{t \wedge \tau_K}
\mathrm{e}^{-\int_0^s \alpha_{\Pi_{r}} \left( X^{\Pi,x}_r \right) \mathrm{d}r}
\left( f_{\Pi_s} + L_{\Pi_s} V_{\pi_{n_k}} - \alpha_{\Pi_{s}} V_{\pi_{n_k}}
\right) \left( X^{\Pi,x}_s \right) \mathrm{d}s  \\
& \geq V_{\pi_{n_k}}(x) +\frac{\epsilon_S}{M_S} \E  \int_0^{t \wedge \tau_K} \mathrm{e}^{-\int_0^s \alpha_{\Pi_{r}} \left( X^{\Pi,x}_r \right) \mathrm{d}r} 
\left(  \mathcal{L}_{\pi_{n_k + 1}} V_{\pi_{n_k}} \right) \left( X^{\Pi,x}_s \right) \mathrm{d}s,  
\label{eq:final_verification}
\end{align}
where the last inequality follows from 
Assumption~\ref{ass2_and_a_half_IHM} and inequality~\eqref{eq:Scaled_Pia_estimate}.

The next task is to take the limit as $k\to\infty$
on both sides of inequality~\eqref{eq:final_verification}. 
Since the sequence 
$\{ \pi_{1+n_k}\}_{k \in \N}$
converges locally uniformly to the locally Lipschitz policy
$\tilde\pi_{\lim}$ 
(resp. $\pi_{\lim}$),
Theorem~\ref{limitsIHM} implies
$V_{\tilde\pi_{\lim}} = V_{\lim}$
(resp. $V_{\pi_{\lim}}= V_{\lim}$).
Proposition~\ref{operatorIHM}
implies
$\mathcal{L}_{\tilde\pi_{\lim}} V_{\lim} =0=\mathcal{L}_{\pi_{\lim}} V_{\lim}$.
Hence we can express
$\mathcal{L}_{\pi_{n_k + 1}} V_{\pi_{n_k}}=  \mathcal{L}_{\pi_{n_k + 1}}V_{\pi_{n_k}}-\mathcal{L}_{\tilde\pi_{\lim}}V_{\pi_{n_k}}+
\mathcal{L}_{\tilde\pi_{\lim}} V_{\pi_{n_k}}-\mathcal{L}_{\tilde\pi_{\lim}} V_{\lim}$.
By Schauder's boundary estimate for
elliptic PDEs~\cite[p.~86]{FriedmanParabolic},
the sequences 
$\{ \gr V_{\pi_{n_k}} \}_{k \in \N}$
and
$\{ \he V_{\pi_{n_k}} \}_{k \in \N}$
are uniformly bounded on
$x+D_K$.
By Assumption~\ref{ass1IHM} and Remark~\ref{rem:V_F_bounded},
the bounded sequence 
$\{(\s_{\pi_{n_k+1}},  \mu_{\pi_{n_k+1}},\alpha_{\pi_{n_k+1}}, f_{\pi_{n_k+1}},  V_{\pi_{n_k+1}}) \}_{k\in\N}$
tends to the limit
$(\s_{\tilde\pi_{\lim}},  \mu_{\tilde\pi_{\lim}},\alpha_{\tilde\pi_{\lim}}, f_{\tilde\pi_{\lim}},  V_{\tilde \pi_{\lim}})$
uniformly on 
$x+D_K$
as $k\uparrow\infty$.
Hence,
so does 
\begin{equation}
\label{eq:Gen_Conv}
 \mathcal{L}_{\pi_{n_k + 1}}V_{\pi_{n_k}}-\mathcal{L}_{\tilde\pi_{\lim}}V_{\pi_{n_k}}=
 [ L_{\pi_{n_k+1}}-L_{\tilde\pi_{\lim}}] V_{\pi_{n_k}} -  (\alpha_{\pi_{n_k+1}}- \alpha_{\tilde \pi_{\lim}})V_{\pi_{n_k}} +
(f_{\pi_{n_k+1}}- f_{\tilde \pi_{\lim}})\to0. 
\end{equation}

By the elliptic version of Theorem 15
in~\cite[p.\ 80]{FriedmanParabolic} applied to the 
family of PDEs 
$\mathcal{L}_{\pi_{n_k}}V_{\pi_{n_k}}=0$, $k\in\N$,
there exists a subsequence of 
$\{V_{\pi_{n_k}}\}_{k\in\N}$
(again denoted by 
$\{V_{\pi_{n_k}}\}_{k\in\N}$),
such that the corresponding sequence 
$\{( V_{\pi_{n_k}}, \gr V_{\pi_{n_k}}, \he V_{\pi_{n_k}})\}_{k\in\N}$
converges uniformly on the closed ball
$x+D_K$
to
$(V_{\pi_{\lim}}, \gr V_{\pi_{\lim}}, \he V_{\pi_{\lim}})=(V_{\lim}, \gr V_{\lim}, \he V_{\lim})$.
Hence it follows that 
\begin{equation}
\label{eq:Gen_Conv_1}
\mathcal{L}_{\tilde\pi_{\lim}} V_{\pi_{n_k}}-\mathcal{L}_{\tilde\pi_{\lim}} V_{\lim}= 
L_{\tilde\pi_{\lim}} (V_{\pi_{n_k}} - V_{\lim})-\alpha_{\tilde\pi_{\lim}}(V_{\pi_{n_k}} - V_{\lim})\to0,
\qquad\text{as $k\to\infty$.}
\end{equation}
Equations~\eqref{eq:Gen_Conv} and~\eqref{eq:Gen_Conv_1} imply that 
$\mathcal{L}_{\pi_{n_k + 1}} V_{\pi_{n_k}}\to0$
as $k\to\infty$
uniformly on the ball 
$x+D_K$.

Apply the DCT to the right-hand side of~\eqref{eq:final_verification} 
and Assumption~\ref{ass1IHM} and  Remark~\ref{rem:V_F_bounded} to its left-hand side:
\begin{align*}
V_{\lim}(x) \leq 
 \E  \int_0^{t \wedge \tau_K} \mathrm{e}^{-\int_0^s \alpha_{\Pi_r} \left( X^{\Pi,x}_r \right) \mathrm{d}r} f_{\Pi_s} \left( X^{\Pi,x}_s \right) \mathrm{d}s +
C_0 \E \mathrm{e}^{-\epsilon_0 (t \wedge \tau_K) }.
\end{align*}
Since this inequality holds for all $K,t>0$
and 
$\tau_K\uparrow\infty$ as $K\uparrow\infty$, the inequality $V_{\lim}(x)\leq V_\Pi(x)$ follows by the 
DCT as $t\wedge K\to\infty$ (cf.
the last paragraph of the proof of Theorem~\ref{limitsIHM}).
\end{proof}


\subsection{Auxiliary results - the one-dimensional case}
\label{subsec:aux_one_dim}
Throughout Sections~\ref{subsec:aux_one_dim} and~\ref{subsec:Proofs_OneDim},
define
$\tau_c^d(Z):= \inf \{ t \geq 0;\;Z_t \in\{c,d\} \}$ ($\inf \emptyset = \infty$)
for any continuous stochastic process $(Z_t)_{t\in\R_+}$ in $\R$
and $-\infty\leq c<d\leq\infty$.

\begin{lemma}
\label{boundaryIHP}
For any Markov policy
$\pi:(a,b)\to A$,
the payoff function
$V_\pi : (a,b) \to \R$
can be continuously extended
by defining
$V_\pi(a) := g(a)$
if
$a > -\infty$
and
$V_\pi(b) := g(b)$
if
$b < \infty$.
\end{lemma}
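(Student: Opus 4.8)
The plan is to prove $\lim_{x\downarrow a}V_\pi(x)=g(a)$ when $a>-\infty$ (the case of a finite right endpoint $b$ is symmetric), continuity of $V_\pi$ on the open interval $(a,b)$ being the one-dimensional counterpart of Lemma~\ref{conIHM}. So fix a finite $a$, pick $\delta\in(0,b-a)$, write $\tau:=\tau_a^b(X^{\pi,x})$ and $A_t:=\int_0^t\alpha_\pi(X^{\pi,x}_s)\,\mathrm{d}s$, and let $\sigma_x:=\tau_a^{a+\delta}(X^{\pi,x})$ be the first exit of $X^{\pi,x}$ from $(a,a+\delta)$. Since $x<a+\delta<b$, path continuity gives $\sigma_x\le\tau$ with $\sigma_x=\tau$ on $\{X^{\pi,x}_{\sigma_x}=a\}$, and $X^{\pi,x}_{\sigma_x}\in\{a,a+\delta\}$ on $\{\sigma_x<\infty\}$. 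Using $\alpha_\pi\ge\epsilon_0$ and $|f_\pi|\le M$, $V_\pi$ is bounded, and (with the convention $g(b):=0$ if $b=\infty$) I would write
\[
V_\pi(x)-g(a)=\E\Big[\int_0^\tau \mathrm{e}^{-A_t}f_\pi(X^{\pi,x}_t)\,\mathrm{d}t\Big]+\Big(\E\big[\mathrm{e}^{-A_\tau}g(X^{\pi,x}_\tau)\I_{\{\tau<\infty\}}\big]-g(a)\Big)
\]
and control the two summands as $x\downarrow a$.

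The technical heart is two facts about the exit of the diffusion from the fixed bounded interval $(a,a+\delta)$, both from scalar diffusion theory. Let $s$ be a scale function for the coefficients $(\mu_\pi,\s_\pi)$, so $s'(y)=\exp\big(-\int_{x_0}^y 2\mu_\pi(z)/\s_\pi^2(z)\,\mathrm{d}z\big)$; since $\mu_\pi$ is bounded and $\s_\pi^2\ge\lambda$, $s'$ is continuous and bounded between two positive constants on $[a,a+\delta]$, so $s$ is a strictly increasing $\C^1$-diffeomorphism there and $s(X^{\pi,x})$ is a bounded continuous local martingale on $[0,\sigma_x]$ (standard for scalar SDEs with bounded non-degenerate coefficients; an exponential Lyapunov function $h(y)=-\mathrm{e}^{\theta y}$ with $\theta$ large makes $L_\pi h<0$ on $[a,a+\delta]$, whence $\E[\sigma_x]<\infty$ and $\sigma_x<\infty$ a.s.). Optional stopping then gives fact (i): $\p(X^{\pi,x}_{\sigma_x}=a+\delta)=\frac{s(x)-s(a)}{s(a+\delta)-s(a)}\to0$ as $x\downarrow a$. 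For fact (ii), that $\sigma_x\to0$ in probability as $x\downarrow a$, I would time-change $N:=s(X^{\pi,x})-s(x)$ by Dambis--Dubins--Schwarz: its quadratic variation $\langle N\rangle_t=\int_0^t(s'\s_\pi)^2(X^{\pi,x}_u)\,\mathrm{d}u$ satisfies $\langle N\rangle_t\ge c\,t$ on $[0,\sigma_x]$ for some $c>0$, so $N=\beta\circ\langle N\rangle$ for a Brownian motion $\beta$ started at $0$, and on $\{\sigma_x>t\}$ the path of $\beta$ stays in $(s(a)-s(x),\,s(a+\delta)-s(x))$ throughout $[0,ct]$; as $s(a)-s(x)\uparrow0$ while $s(a+\delta)-s(x)$ stays bounded away from $0$, we get $\p(\sigma_x>t)\le\p\big(\min_{r\le ct}\beta_r>s(a)-s(x)\big)\to0$ for each fixed $t>0$.

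Granting (i)--(ii), the rest is a short computation. First, for any $\kappa>0$, $1-\mathrm{e}^{-\kappa\tau}\le(1-\mathrm{e}^{-\kappa\sigma_x})+\I_{\{X^{\pi,x}_{\sigma_x}=a+\delta\}}$ (since $\tau=\sigma_x$ on $\{X^{\pi,x}_{\sigma_x}=a\}$), so taking expectations, $\E[1-\mathrm{e}^{-\kappa\tau}]\le\E[1-\mathrm{e}^{-\kappa\sigma_x}]+\p(X^{\pi,x}_{\sigma_x}=a+\delta)\to0$ as $x\downarrow a$, by (i) and by bounded convergence for the first term ((ii) together with $1-\mathrm{e}^{-\kappa\sigma_x}\in[0,1]$). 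Hence the running-cost summand, bounded in absolute value by $M\,\E\int_0^\tau\mathrm{e}^{-\epsilon_0 t}\,\mathrm{d}t=\frac{M}{\epsilon_0}\E[1-\mathrm{e}^{-\epsilon_0\tau}]$ (using $A_t\ge\epsilon_0 t$), tends to $0$. For the boundary summand, on $\{\tau<\infty\}$ we have $X^{\pi,x}_\tau\in\{a,b\}$, so it equals $g(a)\,\E[\mathrm{e}^{-A_\tau}\I_{\{\tau<\infty,\,X^{\pi,x}_\tau=a\}}]+g(b)\,\E[\mathrm{e}^{-A_\tau}\I_{\{X^{\pi,x}_\tau=b\}}]$; since both $\{X^{\pi,x}_\tau=b\}$ and $\{\tau=\infty\}$ are contained in $\{X^{\pi,x}_{\sigma_x}=a+\delta\}$ (to reach $b$, or to avoid $a$ forever, the process must first leave $(a,a+\delta)$ at $a+\delta$), the $g(b)$-contribution is $O(\p(X^{\pi,x}_{\sigma_x}=a+\delta))$, and $0\le 1-\E[\mathrm{e}^{-A_\tau}\I_{\{\tau<\infty,\,X^{\pi,x}_\tau=a\}}]\le\E[1-\mathrm{e}^{-M\tau}]+\p(X^{\pi,x}_{\sigma_x}=a+\delta)\to0$ (using $A_\tau\le M\tau$, i.e.\ $\alpha_\pi\le M$). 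Adding the pieces yields $V_\pi(x)\to g(a)$.

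I expect the only delicate point to be fact (ii): since a general Markov policy $\pi$ leaves $\mu_\pi$ merely measurable, one cannot apply the classical It\^{o} formula to the scale function and must instead lean on the Engelbert--Schmidt description of scalar SDEs (which also yields uniqueness in law, making $V_\pi$ well defined) and the Dambis--Dubins--Schwarz representation, after which (ii) reduces to the elementary Brownian fact $\p(\min_{r\le T}\beta_r>-\eta)\to0$ as $\eta\downarrow0$.
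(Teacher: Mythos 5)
Your proof is correct, and it follows the same high-level plan as the paper's: write $V_\pi(x)-g(a)$ as a running-cost piece plus a discounted boundary-payoff piece, and show both vanish as $x\downarrow a$ because, with probability tending to one, the diffusion exits a small neighbourhood of $a$ quickly and at $a$. Where you differ is in how the two probabilistic facts are established. The paper dispatches them in one sentence via a ``simple coupling argument'' that dominates $X^{\pi,x}$ by a Brownian motion with drift, obtaining $\p(\tau_a^\infty>\epsilon)<\epsilon$ and $\p(\tau_a^\infty>\tau_{-\infty}^b)<\epsilon$ for $x$ near $a$. You instead fix a compact buffer $[a,a+\delta]$, use the scale function and optional stopping at $\sigma_x$ (together with a $\C^2$ exponential Lyapunov function for finiteness of $\E\sigma_x$) to get fact~(i) that the exit happens at $a+\delta$ with vanishing probability, and a Dambis--Dubins--Schwarz time change of the scale-function local martingale to get fact~(ii) that $\sigma_x\to0$ in probability. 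This is longer but self-contained, sidesteps any comparison-theorem subtleties for merely measurable drift, and yields quantitative expressions ($(s(x)-s(a))/(s(a+\delta)-s(a))$ for the wrong-side exit probability and a one-sided Brownian exit estimate for the exit time) where the paper asserts existence of the bounds. Your handling of the event decomposition (bounding $1-\mathrm{e}^{-\kappa\tau}$ through $\sigma_x$ and the event $\{X^{\pi,x}_{\sigma_x}=a+\delta\}$, and noting $\{\tau=\infty\}\cup\{X^{\pi,x}_\tau=b\}\subseteq\{X^{\pi,x}_{\sigma_x}=a+\delta\}$) is also a bit more systematic than the paper's direct split over $C$ and $\Omega\setminus C$, but it amounts to the same bookkeeping. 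Both proofs rely on the tacit continuity of $V_\pi$ in the interior (the scalar analogue of Lemma~\ref{conIHM}), which you correctly flag.
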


\begin{proof}
Let
$\{ x_n \}_{n \in \N}$
be a decreasing sequence in
$(a,b)$
that converges to
$a > -\infty$.
We now prove that $\lim_{n \to \infty} V_\pi(x_n) = g(a)$
(the argument for
$b$
is analogous).

Pick arbitrary
$\epsilon > 0$.
Since
$\mu$
is bounded and
$\s^2$
bounded and bounded away from
$0$,
a simple coupling argument 
yields that 
the process
$X^{\pi,x_n}$
can be bounded 
by a Brownian motion with drift
so that
$\p \left( \tau_a^{\infty} \left( X^{\pi,x_n} \right) > \epsilon \right) < \epsilon$
and
$\p \left( \tau_a^{\infty} \left( X^{\pi,x_n} \right) > \tau_{-\infty}^{b} \left( X^{\pi,x_n} \right) \right) < \epsilon$
hold for large 
$n \in \N$.
Hence, there exists $n_0\in\N$ such that 
\begin{align*}
\p \left( \tau_a^{\infty} \left( X^{\pi,x_n} \right) > \epsilon \wedge \tau_{-\infty}^{b} \left( X^{\pi,x_n} \right) \right) 
\leq \p \left( \tau_a^{\infty} \left( X^{\pi,x_n} \right) > \epsilon \right)
+ \p \left( \tau_a^{\infty} \left( X^{\pi,x_n} \right) > \tau_{-\infty}^{b} \left( X^{\pi,x_n} \right) \right) 
< 2 \epsilon
\end{align*}
for all $n\geq n_0$.
Define the quantities
$B_a^b:= |  \mathrm{e}^{-\int_0^{\tau_a^b ( X^{\pi,x_n} )} \alpha_{\pi} ( X^{\pi,x_n}_s ) \mathrm{d}t}
g ( X^{\pi,x_n}_{\tau_a^b(X^{\pi,x_n})} ) - g(a) |$
and
$A_a^b:=
\int_0^{\tau_a^b ( X^{\pi,x_n} )} \mathrm{e}^{-\int_0^t \alpha_{\pi} ( X^{\pi,x_n}_s )
\mathrm{d}s} |  f_{\pi} ( X^{\pi,x_n}_t ) | \mathrm{d}t$
and the event 
$C:=\{ \tau_a^{\infty} \left( X^{\pi,x_n} \right) \leq \epsilon \wedge \tau_{-\infty}^{b} \left( X^{\pi,x_n} \right) \}$.
Then we have
\begin{align*}
|V_\pi(x_n) - g(a)| \leq
 \E\left(  (A_a^b+B_a^b) \I_{\Omega\setminus C}+
(A_a^b+B_a^b) \I_{C} \right). 
\end{align*}

We now show that there exists
$M > 0$,
which does not depend on $\epsilon$,
such that 
$|V_\pi(x_n) - g(a)|$
is bounded above by
$4M \epsilon$
for all 
$n \geq n_0$.
The expectation on the event 
$\Omega\setminus C$, which
has probability less than
$2 \epsilon$,
is smaller than 
$2M \epsilon$
since
$f,g$
are bounded
and
$\alpha\geq\epsilon_0>0$.
On the event $C$ 
we have 
$\tau_a^b ( X^{\pi,x_n} )\leq \epsilon$,
which 
implies
$\E A_a^b \I_C<M\epsilon$.
On
$C$
it holds that 
$X^{\pi,x_n}_{\tau_a^b(X^{\pi,x_n})}=a$.
Hence, 
the elementary inequality
$1 - \mathrm{e}^{-x} \leq x$ for
$x \geq 0$,
yields an upper bound on 
$\E B_a^b \I_C$
of the form 
$ | g(a)| \E \left( \int_0^{\epsilon} \alpha_{\pi} \left( X^{\pi,x_n}_t \right) \mathrm{d}t \right)$.
This concludes the proof.
\end{proof}

Lemma~\ref{martingaleIHO} is the analogue of Lemma~\ref{martingaleIHM}
with an analogous proof, which we omit for brevity. 

\begin{lemma}
\label{martingaleIHO}
The following holds for every Markov policy
$\pi$,
$x \in (a,b)$
and stopping time
$\rho$:
\begin{align*}
& \E \Bigg( F_{\tau_a^b( X^{\pi,x} )}(X^{\pi,x}) 
 + \mathrm{e}^{-\int_0^{\tau_a^b(X^{\pi,x})} \alpha_\pi \left( X^{\pi,x}_t \right) \mathrm{d}t}\, g \left( X^{\pi,x}_{\tau_a^b(X^{\pi,x})} \right)
\I_{\{ \tau_a^b(X^{\pi,x}) < \infty \}} \Bigg\vert \F_\rho \Bigg) = M_\rho, 
\end{align*}
where
$M_r := 
F_{r \wedge \tau_a^b( X^{\pi,x} )}(X^{\pi,x}) 
+ \I_{\{ r < \infty\}} \mathrm{e}^{-\int_0^{r \wedge \tau_a^b ( X^{\pi,x} )} \alpha_\pi
( X^{\pi,x}_s ) \mathrm{d}s}\,
V_\pi( X^{\pi,x}_{r \wedge \tau_a^b ( X^{\pi,x})})$,
for $r\in[0,\infty]$.
In particular, the process
$M=(M_r)_{r\in[0,\infty]}$
is a uniformly integrable martingale. 
\end{lemma}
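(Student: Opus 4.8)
The plan is to mimic the proof of Lemma~\ref{martingaleIHM}, the one genuinely new ingredient being the absorbing boundary $\{a,b\}\cap\R$ and the terminal payoff $g$, both of which are absorbed into the argument via the continuous extension of $V_\pi$ established in Lemma~\ref{boundaryIHP}. Write $\zeta:=\tau_a^b(X^{\pi,x})$, let $G$ denote the random variable appearing inside the conditional expectation in the statement, and recall that $X^{\pi,\cdot}$ is the solution of SDE~\eqref{sde2IHM} stopped at its exit time from $(a,b)$; since $\s$ and $\mu$ are bounded by Assumption~\ref{ass1IHM}, this stopped process is strong Markov, with the finite endpoints of $(a,b)$ absorbing, just as in the paragraph preceding Lemma~\ref{martingaleIHM} (cf.~\cite[Thm~4.30, p.~322]{Karatzas}). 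By Lemma~\ref{boundaryIHP}, on $\{\zeta<\infty\}$ we have $X^{\pi,x}_\zeta\in\{a,b\}\cap\R$ and $V_\pi(X^{\pi,x}_\zeta)=g(X^{\pi,x}_\zeta)$, so that $G=F_\zeta(X^{\pi,x})+\I_{\{\zeta<\infty\}}\mathrm{e}^{-\int_0^\zeta\alpha_\pi(X^{\pi,x}_s)\mathrm{d}s}V_\pi(X^{\pi,x}_\zeta)$.

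Next I would fix a stopping time $\rho$ and split on $\{\rho\geq\zeta\}$ and $\{\rho<\zeta\}$. On $\{\rho\geq\zeta\}$ one has $\rho\wedge\zeta=\zeta$ and, as $G$ is $\F_\zeta$-measurable hence $\F_\rho$-measurable, $\E(G\mid\F_\rho)=G$; comparing with the expression for $M_\rho$ and using $V_\pi(X^{\pi,x}_\zeta)=g(X^{\pi,x}_\zeta)$ on $\{\zeta<\infty\}$ yields $\E(G\mid\F_\rho)=M_\rho$ on this event. On $\{\rho<\zeta\}$ (so $\rho<\infty$ and $X^{\pi,x}_\rho\in(a,b)$) I would decompose, exactly as in the proof of Lemma~\ref{martingaleIHM}, the integral defining $F_\zeta$ and the terminal term into their contributions on $[0,\rho]$ plus an $\mathrm{e}^{-\int_0^\rho\alpha_\pi(X^{\pi,x}_s)\mathrm{d}s}$-discounted copy of the same functional for the process restarted at time $\rho$; the strong Markov property at $\rho$ then identifies the conditional expectation of that copy given $\F_\rho$ with the one-dimensional payoff $V_\pi(X^{\pi,x}_\rho)$ (this is precisely the definition of $V_\pi$ at the interior point $X^{\pi,x}_\rho$, terminal $g$-contribution included). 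Hence on $\{\rho<\zeta\}$,
\[
\E(G\mid\F_\rho)=F_\rho(X^{\pi,x})+\mathrm{e}^{-\int_0^\rho\alpha_\pi(X^{\pi,x}_s)\mathrm{d}s}V_\pi(X^{\pi,x}_\rho)=M_\rho.
\]
Combining the two cases proves the asserted identity.

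For the final assertion, note that $\alpha_\pi\geq\epsilon_0>0$ together with $f$ and $g$ bounded (Assumption~\ref{ass1IHM}) makes $G$ a bounded random variable, and the boundary extension of Lemma~\ref{boundaryIHP} together with the one-dimensional analogue of Remark~\ref{rem:V_F_bounded} gives a uniform bound on $V_\pi$; consequently $M=(M_r)_{r\in[0,\infty]}$, being of the form $M_r=\E(G\mid\F_r)$ for an integrable $G$, is a uniformly integrable martingale. I expect the only real obstacle relative to Lemma~\ref{martingaleIHM} to be the bookkeeping at the exit time $\zeta$: one must verify that the strong Markov property is invoked for the \emph{stopped} process, that the $\mathrm{e}^{-\int_0^{\rho\wedge\zeta}\alpha_\pi}$-discounting correctly accounts for the absorption at $\zeta$, and that $V_\pi$, extended to the boundary via Lemma~\ref{boundaryIHP}, is exactly what makes $M_\rho$ match $G$ on $\{\rho\geq\zeta\}$; once these points are settled, the remaining computations are the routine ones already carried out in the proof of Lemma~\ref{martingaleIHM}.
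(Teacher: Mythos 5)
Your proposal is correct and follows the paper's intended route: the paper omits the proof of this lemma, stating only that it is ``the analogue of Lemma~\ref{martingaleIHM} with an analogous proof,'' and your argument supplies exactly that analogue, correctly bringing in the two new ingredients --- the boundary extension $V_\pi(a)=g(a)$, $V_\pi(b)=g(b)$ from Lemma~\ref{boundaryIHP}, and the strong Markov property applied at a stopping time strictly before absorption. Your case-split on $\{\rho\ge\zeta\}$ and $\{\rho<\zeta\}$ is a slightly more explicit bookkeeping than the single-line conditioning computation used in the proof of Lemma~\ref{martingaleIHM}, but it amounts to the same thing; one small remark worth making is that, read literally, the indicator $\I_{\{r<\infty\}}$ in the paper's definition of $M_r$ should be $\I_{\{r\wedge\tau_a^b(X^{\pi,x})<\infty\}}$ (otherwise $M_\infty$ would drop the terminal term on $\{\tau_a^b<\infty\}$ and fail to close the martingale), and your proof implicitly and correctly uses this intended reading.
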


\subsection{Proofs of results in Section~\ref{ch3IHO}}
\label{subsec:Proofs_OneDim}

\begin{proof}[Proof of Proposition \ref{operatorIHM} in dimension one]
Recall that Assumption~\ref{ass1IHM} holds. We need to show that 
for any locally Lipschitz Markov policy
$\pi:(a,b)\to A$
we have
$V_\pi \in \C^2((a,b))$
and
$L_\pi V_\pi - \alpha_\pi V_\pi + f_\pi = 0$.

Let
$a < a' < a'' < x < b'' < b' < b$,
and for any
$c < d$
denote
$\tau_{c}^{d} := \tau_{c}^{d}(X^{\pi,x})$.
Let
$v \in \C^2((a',b')) \cap \C([a',b'])$
be the unique solution of the boundary value problem
$L_\pi v - \alpha_\pi v + f_\pi = 0$,
$v(a') = V_\pi(a')$, $v(b') = V_\pi(b')$,
guaranteed to exist by Theorem 19 in \cite[p.\ 87]{FriedmanParabolic}, which is applicable by Assumption~\ref{ass1IHM}. 
Let $ S_t^{a'',b''}  :=    F_{t \wedge \tau_{a''}^{b''}}(X^{\pi,x}) +
\mathrm{e}^{-\int_0^{t \wedge \tau_{a''}^{b''}} \alpha_{\pi} ( X^{\pi,x}_r ) \mathrm{d}r}
v ( X^{\pi,x}_{t \wedge \tau_{a''}^{b''}})$.
Then,  
by It\^o's formula on
$[0,\tau_{a''}^{b''}]$
and the definition of $v$,
the process
$S^{a'',b''} = (S_t^{a'',b''})_{t \geq 0}$
satisfies
\begin{eqnarray*}
S_t^{a'',b''} & =  &  
 v(x) + \int_0^{t \wedge \tau_{a''}^{b''}} \mathrm{e}^{-\int_0^s \alpha_{\pi} \left( X^{\pi,x}_r \right) \mathrm{d}r}
\s_\pi v' \left( X^{\pi,x}_s \right) \mathrm{d}B_s.
\end{eqnarray*}
Hence 
$S^{a'',b''}$
is clearly a uniformly integrable martingale 
and the following equalities hold:
$\lim_{t\uparrow\infty}\E| S_t^{a'',b''}- S_\infty^{a'',b''}|=0$  
and
$v(x)=\E  S_\infty^{a'',b''}$.
Define $S^{a',b'}$ by
substituting 
$\tau_{a''}^{b''}$
in the definition of 
$S^{a'',b''}$
with  $\tau_{a'}^{b'}$.
Since $X^{\pi,x}$ is continuous,
we have
$\lim_{a''\downarrow a',b''\uparrow b'}\tau_{a''}^{b''}=\tau_{a'}^{b'}$ a.s.
Hence, by the DCT, 
$v(x) = \lim_{a''\downarrow a',b''\uparrow b'} \E  S_\infty^{a'',b''}  = \E  S_\infty^{a',b'}$.

Note that 
the boundary conditions for
$v$,
the fact
$X^{\pi,x}_{\tau_{a'}^{b'}}\in\{a',b'\}$ 
and
Lemma~\ref{martingaleIHO} (with
$\rho=\tau_{a'}^{b'}$)
imply 
$ S_\infty^{a',b'} 
 = \E ( F_{\tau_{a}^{b}}(X^{\pi,x})
+ \mathrm{e}^{-\int_0^{\tau_{a}^{b}}
\alpha_{\pi} ( X^{\pi,x}_r ) \mathrm{d}r}
g ( X^{\pi,x}_{\tau_{a}^{b}} ) \I_{\{ \tau_{a}^{b} < \infty \}} \vert \F_{\tau_{a'}^{b'}} )$.
Taking expectations on both sides of this equality yields
$v(x)  
= V_\pi(x)$.
\end{proof}

\begin{proof}[Proof of Theorem \ref{decreasingIHM} in dimension one]
We claim that under Assumptions~\ref{ass1IHM}--\ref{ass2_and_a_half_IHM}, 
the inequality  $V_{\pi_{n+1}}(x) \leq V_{\pi_{n}}(x)$ holds
for all $x\in(a,b)$ and $n\in\N$,
where $\pi_{n+1}$ is defined in~\eqref{piaIHO}.

Define the process
$Y$
as in~\eqref{eq:S_defin} in Section~\ref{subsec:Proofs_Multidim}
and consider the stopped process
$Y_{\cdot\wedge \tau_{a'}^{b'}}$, where
$a < a' < x < b' < b$
and
$\tau_{c}^{d} := \tau_{c}^{d}(X^{\pi_{n+1},x})$
for any
$c < d$.
Then the proof follows the same steps as the proof of 
Theorem \ref{decreasingIHM} in 
Section~\ref{subsec:Proofs_Multidim}.
The only difference is that in the penultimate line of the proof 
of Section~\ref{subsec:Proofs_Multidim}
we apply
the DCT and Lemma~\ref{boundaryIHP} (instead of the DCT only)
to obtain
$V_{\pi_n}(x)\geq V_{\pi_{n+1}}(x)$.
\end{proof}

The proof of the one-dimensional case of Proposition~\ref{subsequenceIHM}  
is completely analogous to the multi-dimensional one and is hence omitted.

\begin{proof}[Proof of Theorem \ref{limitsIHM} in one dimension]
We need to show that 
$V_{\lim}(x) = V_{\pi_{\lim}}(x)$
holds for 
all $x\in(a,b)$.
The proof follows along the same lines as in the multi-dimensional case of Section~\ref{subsec:Proofs_Multidim}.
The only difference lies in the fact that we stop the process 
$X^{\pi_{\lim},x}$ 
at 
$\tau_{a'}^{b'}(X^{\pi_{\lim},x})$, 
where
$a<a'<x<b'<b$,
and take the limit as
$(a',b',t)\to(a,b,\infty)$.
\end{proof}

The verification lemma in the one-dimensional case is established 
exactly as in the proof of Theorem \ref{verificationIHM}. The details are omitted.

\end{document}